\newtheorem{thm}{Theorem}[section]
\newtheorem{cor}[thm]{Corollary}
\newtheorem{prop}[thm]{Proposition}
\theoremstyle{definition}
\newtheorem{defn}[thm]{Definition}
\newtheorem{conj}[thm]{Conjecture}
\newtheorem{rem}[thm]{Remark}
\newcommand{\bpict}{\begin{picture}}
\newcommand{\epict}{\end{picture}}
\newtheorem{rems}[thm]{Remarks}
\numberwithin{equation}{thm}
\newcommand{\grExt}{\text{\rm ext}}
\newcommand{\grHom}{\text{\rm hom}}
\newcommand{\wrDelta}{{\widetilde{\Delta}}^{\text{\rm red}}}
\newcommand{\wX}{{\widetilde{X}}}
\newcommand{\wrnabla}{\widetilde\rnabla}
\newcommand{\wOmega}{{\widetilde\Omega}}
\newcommand{\gr}{\text{\rm gr}}
\newcommand{\sC}{{\mathcal {C}}}
\newcommand{\wgr}{\widetilde{\text{\rm gr}}}
\newcommand{\Ext}{{\text{\rm Ext}}}
\newcommand{\fa}{{\mathfrak a}}
\newcommand{\Amod}{A{\text{\rm --mod}}}
\newcommand{\Bmod}{B\text{\rm --mod}}
\newcommand{\Agrmod}{A{\text{\rm--grmod}}}
\newcommand{\Bgrmod}{B{\text{\rm--grmod}}}
\newcommand{\Hom}{\text{\rm Hom}}
\newcommand{\End}{\operatorname{End}}
\newcommand{\op}{{\text{\rm op}}}
\newcommand{\wfa}{\widetilde{\mathfrak a}}
\newcommand{\sO}{{\mathcal{O}}}
\newcommand{\rad}{\operatorname{rad}}
\newcommand{\Dist}{\operatorname{Dist}}
\newcommand{\rDelta}{\Delta^{\text{\rm red}}}
\newcommand{\rnabla}{\nabla_{\text{\rm red}}}
\newcommand{\wM}{{\widetilde{M}}}
\newcommand{\wA}{{\widetilde{A}}}
\newcommand{\wXi}{{\widetilde \Xi}}
\newcommand{\wU}{{\widetilde U}}
\newcommand{\wDelta}{{\widetilde{\Delta}}}
\newcommand{\wP}{{finite dimensional\widetilde{P}}}
\newcommand{\Jan}{\Gamma_{\text{\rm Jan}}}
\newcommand{\Resreg}{\Gamma_{\text{\rm res,reg}}}
\newcommand{\blist}{\begin{list}{\rom{(\roman{enumi})}}{\setlength
{\leftmarg in}{0em} \setlength{\itemindent}{7ex}
\setlength{\labelsep}{2ex}\setlength{\listparindent}{\parindent}
\usecounter{enumi}}}
\newcommand{\elist}{\end{list}}
\newcommand{\grAmod}{{A{\text{\rm -grmod}}}}
\dedicatory{We dedicate this paper to the memory of Julie Riddleberger.}
\begin{document}

 \title[From forced gradings to Q-Koszul algebras]{From forced gradings to Q-Koszul algebras}
\author{Brian J. Parshall}\address{Department of Mathematics \\
University of Virginia\\
Charlottesville, VA 22903} \email{bjp8w@virginia.edu {\text{\rm
(Parshall)}}}
\author{Leonard L. Scott}
\address{Department of Mathematics \\
University of Virginia\\
Charlottesville, VA 22903} \email{lls2l@virginia.edu {\text{\rm
(Scott)}}}

\thanks{Research supported in part by the National Science
Foundation}

\subjclass{Primary 17B55, 20G; Secondary 17B50}

\medskip
\medskip
\medskip \maketitle

\begin{abstract} 
This paper has two parts. Part I  surveys some recent work by the authors in which ``forced'' grading constructions have played a significant role in the representation theory of semisimple algebraic groups $G$ in positive characteristic.
The constructions begin with natural finite dimensional quotients of the distribution algebra ${\text{\rm Dist}}(G)$, but then ``force'' gradings into the picture by passing to positively graded algebras constructed from certain ideal filtrations of these quotients. This process first guaranteed a
place for itself by proving, for large primes, that all Weyl modules have $p$-Weyl filtrations. Later it led, under similar circumstances, to a new ``good filtration" result for restricted Lie algebra Ext groups between $p$-restricted irreducible $G$-modules. In the process of proving these results, a new kind of graded algebra was invented, called a Q-Koszul algebra. Recent conjectures suggest these algebras arise in forced grading constructions as above, from quotients of $\Dist(G)$, even for small primes and even in settings involving singular weights. Related conjectures point to a promising future for using Kazhdan-Lusztig theory to relate quantum and algebraic group cohomology and Ext groups in these same small prime and singular weight settings. One of these conjectures is partly proved in Part II of this paper.
The proof is introduced by remarks of general interest on positively graded algebras and Morita equivalence, followed
by a discussion of recent Koszulity results of Shan-Varagnolo-Vasserot, observing some extensions of their work.\end{abstract}

\vskip.2in
\begin{center}{\Large\bf Introduction}\end{center}
\vskip.2in

Many questions in the modular representation theory of a semisimple algebraic group $G$ reduce to questions
involving certain families of finite dimensional algebras $A$ which capture part of the story for $G$. This applies both to structural
questions of specific classes of modules, but also to the cohomology theory of $G$. In addition, the existence
of a suitable positive grading on $A$ would provide a powerful tool to attacking many natural questions. Unlike
the case of the BGG category $\sO$, such positive gradings are difficult (and maybe impossible) to obtain. Several years ago
the authors discovered a way to construct (or ``force") from $A$ an associated positively graded algebra $\wgr A$ which, remarkably, enjoys
many nice properties. For example, it can often be shown that $\wgr A$ is a quasi-hereditary algebra. In addition, the
authors conjecture that $\wgr A$ is a standard Q-Koszul algebra, a new class of algebras which stand between
quasi-hereditary and Koszul algebras. This conjecture is made for almost all characteristics $p$ of the base
field of $G$. To date, we have verified the conjecture for $p$ very large, but also in at least one highly non-trivial
case when $p=2$. Taken together with several companion conjectures, these results suggest a new global picture
for the representation/cohomology theory of $G$ which is largely independent of the validity of the Lusztig character formula.
It is ``global" for at least two reasons: first, it should hold for ``small" characteristics, and, second, it is not bound
by any Jantzen region, applying instead for any arbitrarily large finite saturated set of dominant weights.

This paper is organized as follows. Part I surveys the authors' work over the past five years on  forced graded
methods and results. These lead up to the definition of Q-Koszul and standard Q-Koszul algebras in \S4, where the
main conjecture mentioned above is stated. (The notions of Q-Koszul and standard Q-Koszul algebras first appear in 
\cite{PS13b}, \cite{PS13d}, and then in \cite{PS14}, which first states the conjecture and several others.)  Part II contains a number of new
results. For example, in \S5, it is proved that the Q-Koszul property for a finite dimensional algebra is invariant
under Morita equivalence. In \S6, we discuss a recent result of Shan-Varagnolo-Vasserot \cite{SVV14} on the
Koszulity of $q$-Schur algebras, focusing on their general ``parabolic-singular duality" theory for affine Lie algebras. Then we indicate how the theory may be used to treat Koszulity, with some restrictions, for other
quantum types (i.e., other than type $A$). With this result
in hand, \S7 returns to the conjectures given in \cite{PS14}, proving part of one of them.

\medskip
   The authors dedicate this paper to the memory of Julie Riddleberger. For decades, she held an iconic position on our departmental
staff. Without visible complaint,  she typed or  assisted in the typing, of many mathematical papers, often catching mathematical errors
and, sometimes, even correcting them as she worked.

\vskip.3in
\begin{center}{\bf Notation}\end{center}
We generally follow Jantzen's book \cite{JanB} for basic material on the representation theory of a reductive algebraic
group $G$. A notable exception: the root system of $G$ is denoted $\Phi$, and $\Pi$ is a fixed set of simple roots  (rather than $R$ and $S$ as in \cite{JanB}). Nevertheless, here
are some standard notations to be used:

\begin{itemize} 

\item[(1)] Given a finite root system $\Phi$, let $m$ be (temporarily) the maximum of all heights of roots $\alpha\in\Phi$ (taken with respect to some fixed set of
simple roots). Then $h:=m+1$ is the Coxeter number of $\Phi$. The Coxeter number of a reductive group $G$  is defined to be the Coxeter number of its root system $\Phi.$.

\item[(2)] Let $G$ be a simply connected reductive algebraic group over an algebraically closed field $k$. Let $T$ be a fixed maximal torus and let $X=X(T)$ be the weight lattice of $T$. Fix a positive Borel subgroup, and let $X_+=X(T)_+$ be the
corresponding set of dominant weights.  Recall $\lambda\in X$ is $p$-regular
if $(\lambda+\rho,\alpha^\vee)\not\equiv 0$ mod $p$, for all roots $\alpha$ (letting $\rho$ be the half-sum of the
positive roots). Each $p$-alcove contains a $p$-regular (integral) weight if and only if $p\geq h$.  The
set $X_+$ is a poset, setting $\lambda\leq \mu$ if and only if $\mu-\lambda$ is a sum of positive roots. (This is the usual
``dominance order.") Finite 
subposets of $X_+$ play an important role. For example,  $\Resreg$ is the poset ideal in the poset of all $p$-regular dominant weights generated by the set of $p$-restricted $p$-regular weights.

\item[(3)] The Jantzen region is defined by
$$\begin{aligned} \Jan:=\{\lambda& \in X_+\,|\,
(\lambda+\rho,\alpha_0^\vee)\leq p(p-h+2)\\ &{\text{\rm whenever $\alpha_0$ is a 
maximal short root for a component of $\Phi$}}. \}\end{aligned}$$

\item[(4)] The Lusztig character formula (LCF) is a conjectured formula
giving the characters of irreducible modules $L(\gamma)$ in terms of characters of standard (Weyl) modules $\Delta(\lambda)$,  for $\gamma, \lambda, \in \Jan$.  We need not describe this expression explicitly here; see \cite[Appendix C]{JanB}. We note that the LCF holds if  $p$ is sufficiently large, depending on the root system $\Phi$. (See \cite{Fiebig} for a specific bound.)
Also, (\ref{LCF}) below provides an equivalent formulation---convenient for this paper---for the LCF (or, more accurately, for its validity for $\Jan$), stated in terms of all dominant weights.   \end{itemize}
 
\medskip\medskip
\begin{center}{\bf{\Large Part I: A survey}}\end{center}

\vskip.3in
\section{First forced grading results}

 We assume the reader has some familiarity with the theory of rational representations of a reductive group $G$ over an algebraically closed field $k$ of characteristic $p>0$. Nevertheless, the results we discuss here are already meaningful and nontrivial even for $G=GL_n$, a general linear group, or the corresponding ``simple" (and simply connected) algebraic group $SL_n$, the special linear group of all $n\times n$ determinant 1 matrices.

The finite
dimensional
modules with composition factors having highest weights in some fixed finite saturated
set (poset ideal) $\Gamma$ of dominant weights identify with the finite
dimensional modules for a finite
dimensional algebra $A$. The algebra $A$ may be constructed as the quotient $\Dist(G)_\Gamma$ of the distribution algebra $\Dist(G)$ of $G$ by the annihilator in
$\Dist(G)$ of all finite dimensional modules with composition factors having highest weight in $\Gamma$. (It is not entirely obvious that such a quotient is finite dimensional, but it is.) The homological algebra of $A$ closely parallels
that of $G$; for example, given $M,N\in \Amod$, $\Ext^\bullet_A(M,N)\cong
\Ext^\bullet_G(M,N)$, where, on the right-hand side, $M,N$ are identified with rational $G$-modules.  By
varying $\Gamma$, the representation theory of $G$ can be largely recaptured from that of the algebras $A$. Sometimes it is useful to
study smaller categories in this way, such as blocks or unions of blocks of $G$-modules, in which case corresponding statements hold for evident modifications of the posets $\Gamma$ and algebras $A=\Dist(G)_\Gamma$.

All these algebras $A$ are examples of quasi-hereditary
algebras, which have been extensively studied in their own right, but often with an eye toward
applications to representation theory.  When  $G=GL_n$, the famous Schur algebras arise
 this way \cite{GES}. The representation/cohomology
theory of a (Lusztig) quantum enveloping algebra $U_{\zeta}$ at a root of unity also can be studied by
means of similarly defined finite dimensional algebras, using $U_{\zeta}$ in the constructions above. The $q$-Schur algebras of
Dipper-James \cite{DJ} (see also  \cite{Donkin}, \cite{DDPW}  for exhaustive treatments) are the most
 well-known examples.

 A central theme of this article is the suggestion that many (perhaps all)
 of these algebras $A$ have positively graded companion algebras $\wgr A$ with many nice properties. Our interest in graded structures goes back the mid-nineties, especially  to \cite[Remark (2.3.5)]{CPS5}, which observed that the Lusztig conjecture would hold for all the irreducible $A$-modules provided $A$ itself had a sufficiently nice positive grading. In particular it was required that $A$ be isomorphic to $\gr A$, the graded algebra obtained from $A$ by adding together all quotients of successive terms in its radical filtration. (A formal
 definition is given below.) Unfortunately, the same paper gave an example \cite[3.2]{CPS5} that showed an abstract finite dimensional algebra $A$ could have many standard Lie-theoretic
properties, including known ungraded homological consequences of Koszulity, yet fail to have a Koszul grading,  or even have $A\cong \gr A$.  That depressing fact considerably slowed the search for good positive gradings for well over a decade. Finally, we came to the idea of not requiring $A$ be isomorphic to $\gr A$ but only that it share major properties with the latter, such as quasi-heredity. Anyone who has worked with $\gr A$ knows that this requirement sets a very high bar, likely involving difficult proofs, if possible at all. Nevertheless, with a great deal of effort and use of a highly nontrivial result in \cite{CPS5}\footnote{This result is \cite[Thm.2.2.1]{CPS5}. Its hypothesis is a very strong version of the parity conditions on Ext groups common in Kazhdan-Lusztig theory; this strong version holds by \cite[Thm.3.8]{CPS2} if a Koszul grading is also present. The usual parity conditions involving Ext between standard and irreducible modules have to be extended, in the hypotheses of (a dual version of) this theorem, to involve radical series of standard modules. These hypotheses are sufficiently involved that it was not clear it was even practical to verify them without knowing {\it a priori} a Koszul grading was present. Independently of \cite{PS13}, such a verification was first done in a Lie-theoretic setting (for which a Koszul grading on $A$ itself was---and still is---unknown) in the case of (truncated versions of) Virasoro algebras in characteristic 0 \cite{BNW}.} we were able to obtain a result \cite[Thm.10.6]{PS13} of this type. The following theorem gives a slightly abbreviated version. We assume the reader is familiar with the quasi-hereditary algebra notion, but we will give a formal definition of ``standard Koszul" later. (See particularly Definition \ref{standardQKoszul}  below.)

\begin{thm}\label{charpJussieu} Assume that $G$ is a semisimple, simply connected algebraic group over a field of characteristic $p\geq 2h-2$. Also, assume that $p$ is large enough so that the LCF holds for all $p$-regular weights in $\Jan$.  Let $\Gamma$ be a poset ideal in $\Gamma_{\text{\rm res,reg}}$. {\text{\rm [}}Let $N$ be the global dimension of
$A:=\Dist(G)_\Gamma$ or $N=2$ in case this global dimension is 1. Assume also that $p\geq 2N(h-1)-1$.{\text{\rm ]}}
Then the algebra $\gr A$ is a quasi-hereditary algebra with a standard Koszul
grading. Its standard modules  have the form  $\gr\Delta(\lambda)$.
$\lambda\in\Gamma$. \end{thm}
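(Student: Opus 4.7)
The plan is to reduce the entire conclusion to a single application of the deep result \cite[Thm.~2.2.1]{CPS5} flagged in the excerpt, which simultaneously produces a quasi-hereditary structure on $\gr A$ with standard objects $\gr\Delta(\lambda)$ and delivers a standard Koszul grading, provided certain strong parity conditions hold for $\Ext^n_A$ between radical layers of standard modules $\Delta(\lambda)$ and irreducibles $L(\mu)$, $\lambda,\mu\in\Gamma$. Thus the task splits cleanly into (i)~extracting parity for $\Ext^\bullet_A$ from an external Koszul source, and (ii)~upgrading the usual Kazhdan--Lusztig parity to the radical-filtration-sensitive form that \cite[Thm.~2.2.1]{CPS5} actually requires. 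The hypotheses of the theorem are calibrated for exactly this: $\Gamma\subseteq\Resreg$ keeps us in the $p$-regular, $p$-restricted range; $p\geq 2h-2$ and the LCF on $\Jan$ make a Koszul comparison algebra available; and $p\geq 2N(h-1)-1$, where $N$ controls the global dimension of $A$, pins down the homological range in which that comparison is valid.

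For step (i), I would transport the problem to a Lusztig quantum enveloping algebra $U_\zeta$ at a suitable root of unity, whose corresponding finite-dimensional quotient $\Ax$ is known to admit a Koszul grading via the Beilinson--Ginzburg--Soergel framework (and its quantum-group and $q$-Schur extensions). The LCF hypothesis on $\Jan$ is precisely what allows one to match composition factor multiplicities, and hence $\Ext^n$ groups up to degree $N$, between $\Ax{-}\mathrm{mod}$ and $A{-}\mathrm{mod}$; the bound $p\geq 2N(h-1)-1$ ensures this comparison is valid up through all relevant $n$. In particular the usual parity $\Ext^n_A(\Delta(\lambda),L(\mu))=0$ unless $n\equiv \ell(\lambda,\mu)\pmod 2$ is inherited from the graded $\Ax$-side.

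The main obstacle is step (ii): promoting ordinary parity to parity on $\Ext^n_A(\rad^s\Delta(\lambda)/\rad^{s+1}\Delta(\lambda),L(\mu))$ for every radical layer $s$. As the excerpt underscores via the cautionary example \cite[3.2]{CPS5}, such refinement can fail in general, and no purely ungraded argument inside $A{-}\mathrm{mod}$ has any hope of producing it. The way through is to recognize that on the quantum side the radical filtration of the standard module for $\Ax$ coincides with its grading filtration (because the grading is Koszul and the standards are linearly presented), so parity on radical layers is automatic graded parity; this strong parity then transfers to $A$ by the same Ext comparison used in step (i). Once the refined parity conditions are verified, \cite[Thm.~2.2.1]{CPS5} applies directly to $A$ to yield the quasi-hereditary structure on $\gr A$ with standard modules $\gr\Delta(\lambda)$, and the standard Koszul property of the induced grading is the graded repackaging of the same parity statements, essentially along the lines of \cite[Thm.~3.8]{CPS2}.
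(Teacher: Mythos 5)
You correctly identify \cite[Thm.~2.2.1]{CPS5} as the central technical engine and correctly note that its hypotheses require parity conditions involving radical layers of standard modules, not just ordinary Kazhdan--Lusztig parity --- this is exactly what the paper's footnote emphasizes. However, your strategy for verifying these hypotheses has a fundamental gap at step~(i). The assertion that the LCF on $\Jan$ allows one ``to match composition factor multiplicities, and hence $\Ext^n$ groups'' between $\Ax\text{-}\mathrm{mod}$ and $A\text{-}\mathrm{mod}$ does not hold: a character formula matches composition multiplicities, but equality of composition multiplicities between two quasi-hereditary algebras gives no information at all about their $\Ext$ groups. Indeed, the paper's Conjecture~\ref{conjII} in \S7 asks precisely whether certain $\Ext^n$ dimensions agree between $A$ and $B = U_{\zeta,\Gamma}$ (with reduced standard/costandard modules as decorations), and this is stated as an open conjecture even under strong hypotheses; the known cases (\cite[Thm.~5.4]{CPS09}) are serious theorems in their own right, not consequences of character-matching. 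Your proposed transfer of strong parity ``by the same Ext comparison used in step~(i)'' therefore has nothing to stand on.

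There is a second, smaller anachronism: you state that $\Ax = U_{\zeta,\Gamma}$ ``is known to admit a Koszul grading,'' but Koszulity of this full quantum truncation at a root of unity is a deep 2014 result of \cite{SVV14} (and even then only directly in type $A$), substantially postdating \cite{PS13}. What was available at the time, and what the paper actually leans on (as signaled both by the title of \cite{PS13} --- ``\ldots{\it using graded subalgebras}'' --- and by the explicit remark in \S1 that the algebras $u'$ and $u'_\zeta$ ``play a central role in the proofs of the theorems above'') is the \cite{AJS} Koszulity of the regular blocks $\fa = u'$ of the restricted enveloping algebra, respectively of $u'_\zeta$ for the small quantum group. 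The actual argument works through this Koszul graded {\it subalgebra} and the $G_1$/Frobenius-kernel interaction with $G$, not by transporting to $U_\zeta$ at the level of Ext groups. The bound $p\ge 2N(h-1)-1$ is a technical device (subsequently removed in \cite[Cor.~3.8]{PS13b}) controlling the range of cohomological degrees in which the relevant comparisons are valid, but it cannot rescue the flawed quantum-transfer mechanism, since that mechanism is absent in any degree.
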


 The condition on $N$ in the statement of the theorem was removed in \cite[Cor. 3.8]{PS13b}, which also
 implies the ideal $\Gamma_{\text{\rm res, reg}}$ can be replaced by the (po)set $\Gamma_{\text{\rm Jan, reg}}$ of
  all $p$-regular weights in the the Jantzen region $\Jan$. The hypothesis on $p$ in any of these formulations
 is satisfied for $p$ sufficiently large, depending on the root system, as noted in (4) above in the Notation.
  The algebra $\gr A$ is explicitly given by
$$ \gr A :=\bigoplus_{n\geq 0}\rad^nA/\rad^{n+1}A .$$
A similar definition may be made for $A$-modules, such as $\Delta(\gamma)$, resulting in $\gr A$-modules, such as $\gr \Delta(\gamma)$.

The algebras $\gr A$ provide the first examples in this paper of the ``forced grading" process. In general terms,
this just means the building of a graded algebra from a filtration by ideals. In the present context, the process itself is quite familiar. The novelty, however, is that it is possible to prove serious theorems about these algebras, either that $\gr A$ inherits an important property from $A$, or has some other important new
property. Later, the ``forced grading" process itself will involve a more sophisticated construction.

A characteristic 0 quantum version of the above theorem was also proved \cite[Thm. 8.4]{PS13}. We state a version below, and then  describe some recent improvements. Let $U_{\zeta}$ be a (Lusztig) quantum enveloping algebra over $\mathbb Q(\zeta)$ with $\zeta^2$ a primitive $e$th root of unity.  Assume $e$ is odd,\footnote {A slightly different notation was used in \cite{PS13}, effectively requiring that $\zeta$ itself, rather than $\zeta^2$, be an odd root of unity. However, this was unnecessary, as follows, for example, from \cite[Thm. 7.3]{PS6}. The current notation fits with later  usage of $e$, as in \cite{PS6} and our discussion in $\S6$ below
of \cite{SVV14}. As discussed there, the assumptions on $e$ and $\zeta$ can be further relaxed.}  and not divisible by three in case the underlying root system $\Phi$ has a component of type $G_2$.  Assume also $e>h$. The notation $\Delta_{\zeta}(\lambda)$ refers to a standard module in the quantum context.

\begin{thm}\label{KoszulforgeneralizedqSchur}  Let $\Gamma$ be a finite non-empty poset ideal in the poset of $e$-regular dominant weights and let $B=U_{\zeta, \Gamma}$. Then the algebra $\gr B$ is a quasi-hereditary algebra with a standard Koszul
grading. Its standard modules  have the form  $\gr\Delta_{\zeta}(\lambda)$,
$\lambda\in\Gamma$.
 \end{thm}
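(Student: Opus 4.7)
The plan is to parallel the strategy used for Theorem \ref{charpJussieu}, taking advantage of the fact that in the quantum setting, with $\zeta^2$ a primitive $e$th root of unity satisfying the stated conditions and $e>h$, the quantum analogue of the LCF holds for all $e$-regular dominant weights without any further size hypothesis on $e$, by results of Kazhdan--Lusztig and Kashiwara--Tanisaki. Consequently, no large-characteristic assumption of the kind present in Theorem \ref{charpJussieu} needs to be imposed.

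First I would verify that $B := U_{\zeta,\Gamma}$ is quasi-hereditary, with standard modules $\Delta_{\zeta}(\lambda)$, $\lambda\in\Gamma$; this is standard for generalized quantum Schur algebras. Next I would assemble the parity inputs required by the key technical result \cite[Thm.\,2.2.1]{CPS5}: on one hand, the usual Kazhdan--Lusztig-type parity vanishing of $\Ext^n_B(\Delta_\zeta(\lambda), L_\zeta(\mu))$ outside the parity class of $\ell(\mu)-\ell(\lambda)$, and on the other hand the stronger parity condition in which $\Delta_\zeta(\lambda)$ is replaced by the successive quotients of its radical filtration. Both inputs follow, in the quantum setting, from the existence of a Koszul grading on an algebra Morita equivalent to the corresponding (parabolic-singular) block of affine category $\mathcal{O}$ at a negative level, established by Beilinson--Ginzburg--Soergel and its extensions. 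The weaker parity statement is an immediate consequence of Koszulity; the stronger one is then supplied by \cite[Thm.\,3.8]{CPS2}. With both parity conditions in hand, \cite[Thm.\,2.2.1]{CPS5} applies to yield that $\gr B$ is quasi-hereditary with a standard Koszul grading. The identification of the standard modules of $\gr B$ with the associated graded modules $\gr\Delta_\zeta(\lambda)$ follows from the compatibility of the forced-grading construction with the radical filtrations defining these modules, just as in the characteristic $p$ case.

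The main obstacle is the translation step that transports the Koszul grading from affine category $\mathcal{O}$ over to a statement about $B=U_{\zeta,\Gamma}$ itself. This goes through a Kazhdan--Lusztig/Kashiwara--Tanisaki-type equivalence between an appropriate subcategory of $U_\zeta$-modules and a parabolic-singular block of affine category $\mathcal{O}$ at level $-h$; one must check that this equivalence is compatible with the poset ideal $\Gamma$, and intertwines the truncation functors defining $U_{\zeta,\Gamma}$, so that the Ext parity conclusions drawn on the affine side genuinely apply to the whole algebra $B$, rather than only to a singular or infinitesimal subquotient. Once this compatibility is verified, the remainder of the argument is a direct translation of the proof of Theorem \ref{charpJussieu}, with the pleasant simplification that the LCF-type input is automatic, so the hypothesis reduces to the standing assumption $e>h$ together with the stated mild restrictions on $e$.
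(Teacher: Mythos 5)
Your argument imports the parity hypotheses of \cite[Thm.\,2.2.1]{CPS5} from a putative Koszul grading on an algebra Morita equivalent to $B$, obtained via a Kazhdan--Lusztig equivalence with a parabolic-singular block of negative-level affine category $\mathcal O$. This undermines the argument twice over. First, Koszulity for the relevant finite-dimensional truncations of negative-level affine category $\mathcal O$ was not an available ``BGS and extensions'' result when \cite[Thm.\,8.4]{PS13} (which is the result being quoted here) was proved; it is exactly the content of the Shan--Varagnolo--Vasserot theorem presented in \S6 of this paper as a \emph{subsequent} development, which is why the paragraph after the theorem statement describes it as a ``deeper fact.'' Second, and more structurally, the route you propose is self-defeating: if you already know some algebra Morita equivalent to $B$ is Koszul, then by \cite[Lem.\,F.3]{AJS} (or by Proposition \ref{Moritagraded} and Corollary \ref{MoritaCarryOver} here) $B$ itself carries a Koszul grading with $B_0$ semisimple and generated in degree one, whence $\rad^n B = B_{\geq n}$ and $\gr B\cong B$ tautologically. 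At that point the forced grading is irrelevant and there is nothing left to prove; \cite[Thm.\,2.2.1]{CPS5} is unnecessary. You are invoking the hard machinery in precisely the situation where the machinery does nothing, and the machinery's genuine purpose --- which the footnote to Theorem \ref{charpJussieu} is at pains to explain --- is to operate in the opposite situation, where no Koszul grading on $A$ (or $B$) is known.

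What the cited proof actually exploits is Koszulity of the small quantum group algebra $u'_\zeta$ (the $e$-regular blocks of $u_\zeta$), established by Andersen--Jantzen--Soergel when $e>h$. This is a strictly smaller input than Koszulity of $B$: $u_\zeta$ is a subalgebra of $U_\zeta$, not a truncation, and knowing $u'_\zeta$ is Koszul does not tell you $B=U_{\zeta,\Gamma}$ is. The genuinely difficult step in \cite{PS13} is to verify the strong, radical-series parity hypotheses of \cite[Thm.\,2.2.1]{CPS5} for the truncated algebra $B$ using only this infinitesimal Koszulity --- that is, without having \cite[Thm.\,3.8]{CPS2} available in the form your argument wants, because the Koszul grading on $B$ is not yet in hand. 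Your proposal sidesteps this difficulty by effectively assuming the conclusion, and it never engages the small quantum group, which the survey explicitly flags as playing ``a central role in the proofs of the theorems above.''
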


 It is a deeper fact that, under even weaker assumptions on $e$ (none for type $A$), the use of $e$-regular (rather than all) dominant weights is unnecessary, and that $B$ and $\gr B$ become isomorphic upon base change to $\mathbb C$. This was essentially conjectured by us in a remark following \cite[Conj.IIb]{PS14}. We noted there that such a result for type $A$ had been proved by Shan-Varagnolo-Vasserot in \cite{SVV14}, and we speculated that their methods should give similar results in the other types. This is indeed the case, and we state and prove such a general result as Corollary \ref{generalSVV}, after giving an exposition of its main ingredient, the affine Lie algebra result \cite[Thm.3.6]{SVV14}.

Before moving on, we mention that the restricted enveloping algebra $u\subset\Dist(G)$ and its quantum analogue
$u_\zeta\subset U_\zeta$ (usually referred to as the small quantum group) play a key role in this work (though it is
not always mentioned in this survey). Let $u'_\zeta$ denote the sum of the $e$-regular blocks of $u_\zeta$. When
$e>h$, the Lusztig character formula holds for $U_\zeta$ and it is known from \cite{AJS} that $u'_\zeta$ is a Koszul
algebra. The sum of the $p$-regular blocks for $u$ is denoted $u'$ or $\fa$. When $p> h$ and the LCF holds for
$G$, \cite{AJS} also proves that the algebra $u'$ is Koszul. These graded algebras $u'$ (usually denoted by $\fa$) and $u'_\zeta$ (with $e=p$) play a central
role in the proofs of the theorems above. At the end of \S2, we will discuss another compatible (not necessarily Koszul) grading on $\fa$ which does not require that the LCF holds, only that $p>h$. The grading on $\fa$ under the hypothesis
of $p>h$ arises from a grading at the integral level via base change, as will be discussed.

\section{$p$-filtrations}
 Continuing now with the time-line initiated by the above theorems,  we have much more to say about forced gradings.  Actually, the next result \cite[Cor.5.2]{PS5} doesn't mention forced gradings at all, but they are there---in its proof!
 The theorem answers, in the large $p$ case, a question raised by Jantzen \cite{Jan80} more than 35 years ago. 
 As previously remarked, the hypotheses hold for
 any sufficiently large $p$, depending on the root system.

\begin{thm}\label{cor5.2}  Assume that $p\geq 2h-2$ is an odd prime and that the LCF  holds for $G$. Then for any $\gamma \in X_+$, $\Delta(\gamma)$ has a $\rDelta=\Delta^p$-filtration.
\end{thm}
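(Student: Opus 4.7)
The plan is to convert the $\rDelta$-filtration claim into an $\Ext^1$-vanishing criterion and then verify that criterion using the forced-graded algebra supplied by Theorem~\ref{charpJussieu}. By a standard homological criterion (in the spirit of Donkin--Jantzen), $\Delta(\gamma)$ admits a $\rDelta$-filtration if and only if $\Ext^1_G(\Delta(\gamma),\rnabla(\mu))=0$ for every $\mu\in X_+$. Writing $\mu=\mu_0+p\mu_1$ with $\mu_0$ being $p$-restricted, one has $\rnabla(\mu)=L(\mu_0)\otimes\nabla(\mu_1)^{[1]}$, so the task reduces to $\Ext^1$-vanishing into each of these ``reduced costandard'' modules.

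First I would enlarge $\gamma$, $\mu_0$, and $\mu_1$ into a single finite saturated poset ideal $\Gamma\subset X_+$ of $p$-regular weights inside (an enlargement of) $\Resreg$, and pass to the finite-dimensional quotient $A=\Dist(G)_\Gamma$, for which $\Ext^\bullet_A\cong\Ext^\bullet_G$ on the relevant modules. Theorem~\ref{charpJussieu}, together with the strengthening of \cite{PS13b} that removes the $N$-hypothesis, then yields that $\wgr A$ is a quasi-hereditary algebra with a standard Koszul grading whose standard modules are $\wgr\Delta(\lambda)$. In parallel, under the LCF hypothesis, the $p$-regular part $\fa = u'$ of the restricted enveloping algebra is Koszul by \cite{AJS}. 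Combining these two Koszul-type structures should control how $\rnabla(\mu)$ is assembled in $\Amod$ from the honest $A$-costandard modules together with the Koszul algebra $\fa$.

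Once the correct graded representative of $\rnabla(\mu)$ is in hand, a nonzero class in $\Ext^1_A(\Delta(\gamma),\rnabla(\mu))$ would produce a nonzero class in $\grExt^1_{\wgr A}(\wgr\Delta(\gamma),\wgr\rnabla(\mu))$ concentrated in some positive internal degree, contradicting the standard Koszul vanishing. The principal obstacle is precisely this last identification: constructing $\wgr\rnabla(\mu)$ and checking that it occupies the ``linear'' position relative to $\wgr\Delta(\gamma)$ required for standard Koszulity to bite. Since $\rnabla(\mu)$ is not itself an $A$-costandard module, one must track carefully how the Frobenius twist $\nabla(\mu_1)^{[1]}$ and the tensor factor $L(\mu_0)$ interact with the forced grading---which is precisely where the hypotheses $p\geq 2h-2$ and the validity of the LCF (which together make $\wgr A$ standard Koszul and $\fa$ Koszul) are essential.
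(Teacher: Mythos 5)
Your opening step already fails. The criterion ``$\Delta(\gamma)$ admits a $\rDelta$-filtration if and only if $\Ext^1_G(\Delta(\gamma),\rnabla(\mu))=0$ for all $\mu\in X_+$'' is not a standard homological criterion, and in fact its ``only if'' direction is false. The classical Donkin--Jantzen criterion works for $\Delta$- and $\nabla$-filtrations because $\Ext^{>0}_G(\Delta(\lambda),\nabla(\mu))=0$; but $\rDelta$ and $\rnabla$ do \emph{not} form an Ext-orthogonal pair. A major theme of the paper (Theorem \ref{goodfiltrationtheorem} and the multiplicity formulas that follow it) is precisely that $\Ext^n_{G_1}(\rDelta(\lambda),\rnabla(\mu))$ --- and hence $\Ext^n_G(\rDelta(\lambda),\rnabla(\mu))$ --- is typically nonzero for $n>0$. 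Concretely, for $G=SL_2$ with $p=5$, take $\lambda=0$ and $\mu=8$: then $\rDelta(0)=k$ trivially has a $\rDelta$-filtration, yet the Hochschild--Serre spectral sequence gives
$$\Ext^1_G(\rDelta(0),\rnabla(8))\cong\Hom_G\bigl(k,\,H^1(G_1,L(3))^{[-1]}\otimes\nabla(1)\bigr)\cong\Hom_G(\Delta(1),\nabla(1))\neq 0.$$
So $\Ext^1$-vanishing against all $\rnabla(\mu)$ cannot characterize modules with a $\rDelta$-filtration, and the plan of showing such vanishing by Koszul-grading arguments is aimed at the wrong target.

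There is a second, more structural, mismatch. You invoke Theorem \ref{charpJussieu}, which concerns $\gr A$ (the graded algebra built from the radical filtration of the characteristic-$p$ algebra $A$ itself), but then read off conclusions about $\wgr A$; these are genuinely different constructions, and it is $\wgr A$ (and its integral precursor $\gr\wA$) that carries the needed structure. The paper's proof of this theorem does not stay in characteristic $p$ at all: it first reduces to $p$-regular $\gamma$, then lifts $\Delta(\gamma)$ to an $\wA$-lattice for the quantum algebra $\wA=\wU_{\zeta,\Gamma}$ over a DVR $\sO$, and applies the integral forced grading $\gr\wA=\bigoplus_i(\wA\cap\rad^i\wA_K)/(\wA\cap\rad^{i+1}\wA_K)$, relying on Theorem \ref{mainquantumresult} to know $\gr\wA$ is integrally quasi-hereditary. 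The $\Ext^1$-criterion actually used (from \cite[Thm.4.4]{PS5}) is a \emph{graded} criterion at the level of $\gr\wA$-lattices, namely $\grExt^1_{\gr\wA}(\gr\wM,\wX)=0$ for specific graded lattices $\wX$ constructed from injective $\wA_K$-modules and shown to carry costandard filtrations. That machinery --- integral lifting, the $\gr\wA$ grading, and the bespoke graded vanishing criterion --- is the core of the argument and is absent from your sketch. You do rightly flag, at the end, that identifying the graded position of $\rnabla(\mu)$ is an obstacle; but the earlier appeal to a false ungraded criterion and the conflation of $\gr A$ with $\wgr A$ are the more serious gaps.
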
 

The modules  $\Delta^p(\gamma)$ date back to Jantzen's paper. If a dominant weight $\gamma$ is written 
$\gamma=\gamma_0 + p\gamma_1$ with $\gamma_0$ $p$-restricted and $\gamma_1$ dominant, then $\Delta^p(\gamma):=L(\gamma_0)\otimes \Delta(\gamma_1)^{[1]}$, the tensor product of a restricted irreducible module with the Frobenius twist of a standard module. The modules $\rDelta(\gamma)$ are obtained by ``reduction mod $p$", using a lattice generated by a highest weight vector (of weight $\gamma$),  from an irreducible module for the quantum enveloping algebra $U_\zeta$. They were first studied substantively by Lin \cite{Lin98} and later in \cite{CPS09} (with $e=p$), which gave them the name used here. Lin's paper shows $\rDelta(\gamma)\cong \rDelta(\gamma_0)\otimes \Delta(\gamma_1)^{[1]}$. The LCF, as a traditional character
formula for irreducible modules, follows in the quantum (root of unity, dominant weight) case from an analog
for negative level affine Lie algebras, whenever the Kazhdan-Lusztig functor (see \cite{T} and the discussion in
\S6 below) is an equivalence. This is true, even in the case of singular highest weights.
Thus, whenever the LCF is known\footnote{There are modest standing assumptions on the root of unity $\zeta$ used in both the cited papers, but these can surely be weakened with more modern references. In particular, it is likely that Lin's result holds in all the known cases where the Kazhdan-Lusztig functor is an equivalence (as discussed briefly in \S6), though we have not checked it.}
 in the quantum case (e.g, the cases where the Kazhdan-Lusztig functor is an equivalence of categories), the validity of the LCF  in a corresponding algebraic groups case, for a given prime characteristic $p>0$, is equivalent to the assertion
\begin{equation}\label{LCF}\rDelta(\gamma)\cong \Delta^p(\gamma),\end{equation}
 for all dominant weights $\gamma$. This frees the LCF from the Jantzen region, and is a formulation that can  sometimes be used in dominant weight poset ideals, without requiring the above isomorphism to hold for all dominant weights. See \cite[Thm.B.1]{PS5} for a modest application of this kind, a variation on the theorem above in which the LCF is not assumed for the weight $\gamma$ itself. For other illustrations, see
 \cite{PS13d}.

For our purposes, here, however, the main advantage of the above equation is that it shows, for $p\gg 0$, that finding  $\Delta^p$-filtrations is equivalent to finding $\rDelta$-filtrations. Since standard modules in all cases
can be obtained by ``reduction mod $p$" from analogous quantum
modules, this allows Jantzen's problem to be considered
from the quantum group point of view.\footnote{Moreover, even for small primes $\rDelta$-filtrations give
$\Delta^p$-filtrations, since the modules $\rDelta(\gamma)$ all have $\Delta^p$-filtrations. However, there is no complete equivalence of the two filtration types for small primes. In fact,  it is known, through an example of Will Turner (see \cite[Prop.5.1]{PS14}) for $p=2$, that there are standard modules that do not have $\rDelta$-filtrations, though they do have $\Delta^p$-filtrations.}

Actually, thinking entirely in terms of the modules $\rDelta$,  a much stronger analog of the Jantzen question had been conjectured in \cite[Conj.6.11]{CPS09} for these primes $p>h$ and $p$-regular dominant weights $\gamma$, namely, that $\Delta(\gamma)$ has a $\rDelta$-filtration compatible with the filtration induced on it through the quantum radical series. 
The result  \cite[Thm. 6.9]{CPS09} established that the top two sections of this filtration had such a $\rDelta$-filtration (the topmost section just being $\rDelta(\gamma)$ itself) assuming $p$ was also large enough that the LCF held. In essence, it is this conjecture that is proved (in the large $p$ case) in \cite{PS5} to obtain the theorem above. First, the proof reduces to the case where the weight $\gamma$ in Theorem \ref{cor5.2} is $p$-regular. Then the standard module $\Delta(\gamma)$ is lifted to an integral form for the quantum group, and a ``radically new" version of forced grading is used.

An explanation of this new version requires the notion, first given in \cite{CPS1a}, of an {\it integral} quasi-hereditary algebra $\wA$ over a commutative ring $\sO$---in our case,  $\sO$ is a DVR. Denote its fraction field by $K$ and its residue field by $k$.  For simplicity, we also assume that  $\wA$ is {\it split} in the sense that all irreducible modules of $\wA_k$ or $\wA_K$ are absolutely irreducible.  Then $\wA$ is a
(split) integral quasi-hereditary algebra if it is, first of all, an $\sO$-algebra which is finitely generated and free over $\sO$(i.e., $\wA$ is an $\sO$-order). Second,  $\wA_k$, $\wA_K$ must be both quasi-hereditary with respect to the same (finite) poset $\Gamma$. Finally, for any $\gamma \in \Gamma$ the standard modules $\Delta_k(\gamma)$ and $\Delta_K(\gamma)$ for both of these algebras are required to be obtained by base change from the same $\wA$-lattice $\wDelta(\gamma)$.\footnote{To obtain agreement with the definition in \cite{CPS1a} it is necessary to build some projective $\wA$ modules with
appropriate $\wDelta$-filtrations. These can be obtained by an iterative construction, adapting the arguments of \cite[2A.3]{DS}. Alternately, \cite[Thm.4.15 ]{Ro} may be quoted.} The new version of forced grading is the construction
$$\gr\wA:=\bigoplus_{i\geq 0}\frac{\wA\cap \rad^i\wA_K}{\wA\cap\rad^{i+1}\wA_K}$$
which gives a positively graded $\sO$-order. A similar construction may be applied to $\wA$-lattices $\wM$, yielding graded lattices $\gr\wM$ for $\gr\wA$. Each of these constructions can be  base-changed to $k$. We sometimes write $A$=$k\otimes\wA$ and $\wgr A$=$k\otimes \gr\wA$, and use similar conventions for lattices. The graded $k$-algebra $\wgr A$ can be quite different from $\gr A$, and, generally speaking, we show it has better properties than the latter. A main result in this direction is \cite[Thm 6.3]{PS5.5}, below, stated at the integral level. Here the algebra $\sO$ is the localization of ${\mathbb Z}[\zeta]$ at the ideal generated by $\zeta -1$, with $\zeta$ a primitive $p$th root of 1. The algebra $\wA=\wU_{\zeta,\Gamma}$ is the natural $\sO$-order
in the truncation $U_{\zeta,\Gamma}$ of the Lusztig quantum group over $\mathbb Q(\zeta)$.

\begin{thm}\label{mainquantumresult} Assume that $e=p>2h-2$ is an odd prime and consider the algebra $\wA=\wU_{\zeta,\Gamma}$, where $\Gamma$
is an non-empty (finite) ideal of $p$-regular dominant weights. Then $\gr\wA$ is a quasi-hereditary algebra over $\sO$, with
standard modules $\gr\,\wDelta(\lambda)$, $\lambda\in\Gamma$.
\end{thm}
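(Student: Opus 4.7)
The strategy is to verify, for $\gr\wA$, the three defining conditions of a split integral quasi-hereditary $\sO$-order with respect to the poset $\Gamma$: (a) freeness over $\sO$; (b) quasi-heredity of both fibers $(\gr\wA)_K$ and $(\gr\wA)_k$ for $\Gamma$; and (c) common standard lattices $\gr\wDelta(\lambda)$ producing the standard modules on either side by base change. Condition (a) is immediate once one checks that each piece $F^i\wA := \wA \cap \rad^i\wA_K$ of the defining filtration is $\sO$-pure in $\wA$: the quotient $F^i\wA/F^{i+1}\wA$ embeds in $\wA_K/\rad^{i+1}\wA_K$ and is thus torsion-free over the DVR $\sO$, hence free. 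The same purity argument applied to the induced filtration $F^i\wDelta(\lambda) := \wDelta(\lambda) \cap \rad^i\wA_K\cdot\wDelta(\lambda)_K$ produces $\sO$-pure graded $\gr\wA$-lattices $\gr\wDelta(\lambda)$.

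For the generic fiber in (b), flatness of $\sO \to K$ passes through the intersection construction to yield $(\gr\wA)_K = \gr(\wA_K) = \gr U_{\zeta,\Gamma}$ and $(\gr\wDelta(\lambda))_K = \gr\Delta_{\zeta}(\lambda)$. Under the stated hypothesis one has $e=p>h$, so Theorem \ref{KoszulforgeneralizedqSchur} applies and makes $\gr U_{\zeta,\Gamma}$ quasi-hereditary over $K$ with standard modules $\gr\Delta_{\zeta}(\lambda)$, $\lambda\in\Gamma$. This settles the $K$-halves of (b) and (c) at one stroke; projective $K$-covers with $\gr\Delta_{\zeta}$-filtrations can then be lifted iteratively to $\sO$-pure graded projectives over $\gr\wA$ along the lines of the CPS1a argument invoked in the footnote to the definition of integral quasi-heredity.

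The main obstacle is the special fiber in (b): showing that $(\gr\wA)_k$ is quasi-hereditary for $\Gamma$ with standard modules $(\gr\wDelta(\lambda))_k$. The difficulty is that reduction of $F^\bullet\wA$ modulo the maximal ideal of $\sO$ produces a filtration of $\wA_k$ that need not coincide with its radical filtration, so $(\gr\wA)_k$ can differ from $\gr(\wA_k)$ and Theorem \ref{charpJussieu} does not apply to $(\gr\wA)_k$ directly. The plan is to exploit the hypothesis $p>2h-2$: via Lusztig's quantum Frobenius, $\wA_k$ specializes to a truncation of $\Dist(G)$ in the large-$p$, $p$-regular range where the LCF holds and Theorem \ref{charpJussieu} controls the ungraded quasi-hereditary structure together with the composition-factor data of successive radical layers of standard modules. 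Because the graded $\sO$-ranks of $\gr\wA$ and of each $\gr\wDelta(\lambda)$ are already pinned down by the generic fiber (where the characters are computable from Theorem \ref{KoszulforgeneralizedqSchur}), an $\sO$-rank comparison rules out any dimension collapse upon reduction, and forces $(\gr\wA)_k$ to be quasi-hereditary for $\Gamma$ with the expected standard modules $(\gr\wDelta(\lambda))_k$. With (a), (b), and (c) now in place, integral quasi-heredity of $\gr\wA$ with standard modules $\gr\wDelta(\lambda)$, $\lambda\in\Gamma$, follows from the definition.
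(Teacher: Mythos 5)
Your outline for parts (a) and the generic fiber is fine: $\sO$-purity of the filtration terms gives freeness of $\gr\wA$, and the identification $(\gr\wA)_K\cong\gr(\wA_K)=\gr U_{\zeta,\Gamma}$ lets you invoke Theorem~\ref{KoszulforgeneralizedqSchur} for the $K$-fiber. But the special-fiber step is where the real content lies, and your argument there has a genuine gap. Theorem~\ref{mainquantumresult} is stated with no LCF hypothesis at all---only $e=p>2h-2$ and $p$-regularity---and the surrounding text of the paper emphasizes that several of the PS5.5 results deliberately avoid any reliance on the LCF. Your plan, however, asserts that ``in the large-$p$, $p$-regular range'' the LCF holds and then invokes Theorem~\ref{charpJussieu} (whose hypothesis explicitly requires the LCF). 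That is false as stated: $p>2h-2$ does not imply the LCF, so you are silently strengthening the hypotheses of the theorem you are trying to prove.

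Even if one were to grant the LCF, the closing ``$\sO$-rank comparison rules out any dimension collapse'' is not an argument. Freeness of $\gr\wA$ over the DVR $\sO$ already forces $\dim_k(\gr\wA)_k=\dim_K(\gr\wA)_K$; no collapse was ever possible, and matching dimensions say nothing about whether $(\gr\wA)_k$ carries a heredity chain with the right standard objects. Quasi-heredity of the closed fiber is a structural statement and must be established by exhibiting the requisite graded ideals, or equivalently (following the \cite{CPS1a}/\cite{DS}/\cite{Ro} criteria invoked in the footnote to the definition) by producing graded projective $\gr\wA$-lattices with $\gr\wDelta$-filtrations in a grade-by-grade sense. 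The actual route taken in \cite{PS5.5} is unconditional on the algebraic-group side precisely because it leans on the quantum small group: the integral form $\wfa$ of $u'_\zeta$ admits a positive grading base-changing to the \cite{AJS} Koszul grading on $u'_\zeta$ (which needs only $e>h$, no LCF), and the $\wDelta$-lattices and projectives are analyzed via their $\wfa$-module structure. Your proposal replaces that quantum input with a mod-$p$ appeal to Theorem~\ref{charpJussieu}, which both overreaches the hypotheses and, even where applicable, does not by itself control $(\gr\wA)_k$ since, as you yourself note, $(\gr\wA)_k$ need not equal $\gr(\wA_k)$.
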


The condition on $p$ can likely be improved, and $p$-regularity may not be necessary. Nevertheless, the theorem as stated played a key role
in the proof of Theorem \ref{cor5.2}. Very briefly, an Ext$^1$-vanishing criterion for grade-by-grade  $\rDelta$-filtrations of graded lattices $\gr \wM$ is obtained in \cite[Thm.4.4]{PS5}.  The criterion is that $\grExt^1_{\gr\wA}(\gr\wM,\wX)=0$
for certain graded $\gr\wA$-lattices $\wX$. (Here $\grExt$ denotes Ext computed in the category of graded
modules, a notation that will reemerge again in \S4.) The graded modules $\wX$ used in the second variable are graded $\gr\wA$-lattices arising from injective $\wA_K$-modules.\footnote{The proof of the criterion requires control on the grade 0 part of the base change of these lattice to $K$, most easily obtained
if the base change gives an indecomposable module at the $\wA_K$ level. This requires the LCF.} The construction of
these modules $\wX$ gives them, in the language of the graded integral quasi-hereditary algebra $\gr \wA$, a costandard filtration, so that the required Ext$^1$-vanishing is satisfied when $\gr\wM=\gr\wDelta(\gamma)$. 

To return to the end of \S1, the small quantum group $u_\zeta$ admits an integral form, as does $u'_\zeta$. The
latter is denoted $\wfa$. We have $\wfa_k\cong \fa=u'$. It is proved in \cite[Thm. 8.1]{PS5.5} that, when $p>h$, the
algebra $\wfa$ has a positive grading which base changes to the Koszul grading on $u'_\zeta$. In turn, the grading
on $\wfa$ induces a positive grading on $\fa$.  Also, when the LCF holds, the grading on $\fa$ agrees with its Koszul
grading (cf. the remarks at the end of \S1).

\section{Applications to good filtrations}
The forced grading methods described in the previous section form a basis for extending their scope from  module
structure theory to the study of homological resolutions. In particular, this leads to some striking results concerning good
filtrations of cohomology spaces. The algebra $\fa$ (which captures part of the representation theory of the infinitesimal
group $G_1$) is used as both a tool and a target. For example, the following result is proved in \cite[\S\S5,6]{PS13b}.

\begin{thm}\label{goodfiltrationtheorem} Let $G$ be a semsimple, simply connected algebraic group, and assume that $p\geq 2h-2$ is odd and sufficiently large that the LCF holds.   Let $\lambda,\mu$ be arbitrary $p$-regular
dominant weights.  Then the following statements hold:

(a) For any integer $n\geq 0$,  the rational $G$-module $\Ext^n_{G_1}(\rDelta(\lambda),\rnabla(\mu))^{[-1]}$  has a $\nabla$-filtration.

 (b) For any integer $n\geq 0$, the rational $G$-module $\Ext^n_{G_1}(\Delta(\lambda),\rnabla(\mu))^{[-1]}
$ has a $\nabla$-filtration. Also,  the natural map
$$
\Ext^n_{G_1}(\rDelta(\nu),\rnabla(\mu))\to\Ext^n_{G_1}(\Delta(\nu),\rnabla(\mu))$$
induced by the quotient map $\Delta(\nu)\twoheadrightarrow\rDelta(\nu)$  is surjective.

(c)  Dually, for any integer $n\geq 0$, the rational $G$-module $\Ext^n_{G_1}(\rDelta(\lambda),\nabla(\mu))^{[-1]}$ has a $\nabla$-filtration. Also,
the natural map.
$$ \Ext^n_{G_1}(\rDelta(\mu),\rnabla(\nu))\to\Ext^n_{G_1}(\rDelta(\mu),\nabla(\nu))$$
 induced by the inclusion $\rnabla(\nu)\hookrightarrow\nabla(\nu)$ is surjective.
\end{thm}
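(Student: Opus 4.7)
The plan is to leverage the forced grading technology of the preceding section to move structural information from the small algebra $\fa=u'$ (Koszul graded by the base change of $\wfa$, thanks to the LCF assumption) onto the $G_1$-cohomology between reduced standard and costandard modules, and then to bootstrap from the reduced to the unreduced statements via the $\rDelta$-filtration theorem (Theorem \ref{cor5.2}).

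First I would reduce Part (a) to the case of $p$-restricted weights.  For $\lambda=\lambda_0+p\lambda_1$ and $\mu=\mu_0+p\mu_1$ with $\lambda_0,\mu_0$ $p$-restricted, Lin's identity combined with $\rDelta(\lambda_0)=L(\lambda_0)$ (which holds since $\rDelta=\Delta^p$ under the LCF) gives
\[
\rDelta(\lambda)\cong L(\lambda_0)\otimes\Delta(\lambda_1)^{[1]},\qquad \rnabla(\mu)\cong L(\mu_0)\otimes\nabla(\mu_1)^{[1]}.
\]
Since the Frobenius twists are $G_1$-trivial, the $G$-module $\Ext^n_{G_1}(\rDelta(\lambda),\rnabla(\mu))^{[-1]}$ is isomorphic to a tensor product $\Ext^n_{G_1}(L(\lambda_0),L(\mu_0))^{[-1]}\otimes\Delta(\lambda_1)^{*}\otimes\nabla(\mu_1)$, and by Donkin's theorem on tensor products of $\nabla$-filtered modules the claim reduces to producing a $\nabla$-filtration on $\Ext^n_{G_1}(L(\lambda_0),L(\mu_0))^{[-1]}$ for $p$-restricted highest weights.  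This reduced statement I would attack by computing $\Ext^n_{G_1}$ via a $G_1T$-equivariant projective resolution of $L(\lambda_0)$ in the restricted category, obtained by base-changing a graded $\wfa$-projective cover; the Koszul grading on $\wfa$ (from \cite[Thm.8.1]{PS5.5}) forces the relevant $\grExt$ to be concentrated in a single grade, and the $G$-equivariance—tracked at the integral level through the $\wA$-action compatible with the grading on $\gr\wA$ given by Theorem \ref{mainquantumresult}—upgrades the resulting graded pieces to rational $G$-modules whose constituents carry $\nabla$-filtrations after Frobenius untwist.

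Second, Parts (b) and (c) I would deduce from Part (a) by standard long exact sequences.  For (b), apply $\Ext^\bullet_{G_1}(-,\rnabla(\mu))$ to the short exact sequence
\[
0\to K\to\Delta(\nu)\to\rDelta(\nu)\to 0,
\]
where, by Theorem \ref{cor5.2}, the kernel $K$ carries a $\rDelta$-filtration with sections $\rDelta(\gamma)$ for $\gamma$ strictly above $\nu$ in the poset ideal.  Induction on the ideal, combined with Part (a) applied to the sections of $K$, simultaneously yields the $\nabla$-filtration on $\Ext^n_{G_1}(\Delta(\nu),\rnabla(\mu))^{[-1]}$ and the surjectivity of the natural map in (b)—the surjectivity amounting to the vanishing of the connecting boundary map, which follows from Ext-vanishing between $\nabla$- and $\rDelta$-filtered modules one degree higher.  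Part (c) proceeds dually, starting from $0\to\rnabla(\nu)\to\nabla(\nu)\to C\to 0$ with $C$ carrying a $\rnabla$-cofiltration (the dual of Theorem \ref{cor5.2}), applied via $\Ext^\bullet_{G_1}(\rDelta(\mu),-)$.

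The hard part will be step two: promoting the graded, integral structure of $\wfa$ to an honest $\nabla$-filtration on the rational $G$-module $\Ext^n_{G_1}(L(\lambda_0),L(\mu_0))^{[-1]}$.  One has to interlace two distinct pieces of data—the $G$-action, which exists only on the ungraded algebra $\wA$, and the grading, which lives on $\gr\wA$—and show that on the Ext level they combine to force good-filtration behavior.  Concretely, the issue is that an apparent $G$-module filtration obtained from the $\gr\wA$-grading need not, a priori, have $\nabla$-filtered sections, and the argument requires pinning down the $G$-structure on the grade-zero part of the projective covers in $\wfa$, most cleanly via the indecomposability and LCF-controlled character data at the generic fiber.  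Once this identification is made the $\nabla$-filtration descends to the cohomology by general quasi-hereditary homological algebra.
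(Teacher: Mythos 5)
Your overall orientation is right---you correctly identify that the engine is the forced grading technology together with the Koszulity of $\fa$ coming from the LCF, and your tensor-product reduction of part (a) to $p$-restricted weights via Lin's identity and $\rDelta(\lambda_0)\cong L(\lambda_0)$ is sound (with $\Delta(\lambda_1)^*\cong\nabla(-w_0\lambda_1)$ and Donkin--Mathieu giving the $\nabla$-filtration of the tensor factor). But the step you flag as ``the hard part'' is not merely hard, it is the whole theorem, and your proposed mechanism for it does not close. A $G_1T$-equivariant projective resolution of $L(\lambda_0)$ obtained from a graded $\wfa$-projective cover computes the graded $\fa$-Ext groups and, by Koszulity, gives the single-grade concentration you want; what it does not do is carry a $G$-action. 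The rational $G$-module structure on $\Ext^n_{G_1}(L(\lambda_0),L(\mu_0))^{[-1]}$ requires a resolution by objects that are simultaneously $\fa$-projective and $G$-modules (equivalently $\wgr A_{\Gamma_i}$-modules for an expanding chain of ideals $\Gamma_i$), and one needs these objects---and the syzygies---to admit $\rDelta$-filtrations grade by grade so that the resulting $G$-structure on cohomology is $\nabla$-filtered. This is exactly what the paper's $(\Gamma,\fa)$-projective, $\fa$-linear complexes $\Xi_\bullet\twoheadrightarrow M$ supply, and their construction is the nontrivial content behind \cite[\S\S5,6]{PS13b}. Citing Theorem~\ref{mainquantumresult} as giving ``$\wA$-action compatible with the grading on $\gr\wA$'' does not produce such a resolution: that theorem gives quasi-heredity of $\gr\wA$, not the interlocking linearity/$\rDelta$-filtration/$\fa$-projectivity package your argument would need.

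For parts (b) and (c) your long-exact-sequence plan is on the right track, but the phrasing conflates the relevant maps: surjectivity of $\Ext^n_{G_1}(\rDelta(\nu),\rnabla(\mu))\to\Ext^n_{G_1}(\Delta(\nu),\rnabla(\mu))$ amounts to the vanishing of the restriction map $\Ext^n_{G_1}(\Delta(\nu),\rnabla(\mu))\to\Ext^n_{G_1}(K,\rnabla(\mu))$, not of a boundary map, and establishing that requires essentially the same filtration control over the kernel term that one gets from the $\Xi_\bullet$ machinery (plus an induction on the poset that must also produce the $\nabla$-filtration claim simultaneously). So even granting (a), your sketch of (b),(c) needs the same missing ingredient. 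In short: the strategy and the tools you invoke are the correct ones in spirit, but without the $(\Gamma,\fa)$-projective resolution construction there is a genuine gap between the $\fa$-graded picture and the rational $G$-module conclusions.
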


In addition, \cite[\S7]{PS13b} gives explicit formulas for the multiplicities
$$
\begin{cases}
[\Ext^n_{G_1}(\rDelta(\lambda), \rnabla(\mu))^{[-1]}:\nabla(\omega)],\\ [\Ext^n_{G_1}(\Delta(\lambda), \rnabla(\mu))^{[-1]}:\nabla(\omega)],\\ [\Ext^n_{G_1}(\rDelta(\lambda),\nabla(\mu))^{[-1]}:  \nabla(\omega)]
\end{cases}
$$
for any dominant weight $\omega$. These multiplicities are expressed in terms of Kazhdan-Lusztig polynomial coefficients for the affine Weyl group
$W_p$ attached to $G$.

\begin{rems} (a) Theorem \ref{goodfiltrationtheorem} is suggested by the work \cite{KLT} of Kumar, Lauritzen, and Thomsen, showing that, if $p>h$, then, for any $n\geq 0$, $H^n(G_1,\nabla(\lambda))^{[-1]}=\Ext^n_{G_1}(k,\nabla(\tau))^{[-1]}$ always has a
$\nabla$-filtration.

(b) When the conclusion of Theorem \ref{goodfiltrationtheorem}(a) holds, observe that
$$\Ext^n_G(\rDelta(\lambda),\rnabla(\mu))\cong (\Ext^n_{G_1}(\rDelta(\lambda),\rnabla(\mu))^{[-1]})^G$$
since the corresponding Hochschild-Serre spectral sequence
$$E_2^{s,t}=H^s(G,\Ext^t_{G_1}(\rDelta(\lambda),\rnabla(\mu))^{[-1]})\implies \Ext^{s+t}_G(\rDelta(\lambda),\rnabla(\mu))$$
collapses in the sense that $E_2^{s,t}=0$ for $s>0$ and $E_2^{0,s+t}\cong E^{0,s+t}_\infty\cong\Ext^{s+t}_G(\rDelta(\lambda),
\rnabla(\mu))=0$. This is true because $H^n(G,\nabla(\tau))$ vanishes for $n>0$ and all dominant weights $\tau$.
In particular, $\Ext^n_{G}(\rDelta(\lambda),\rnabla(\mu))\cong
(\Ext^n_{G_1}(\rDelta(\lambda),\rnabla(\mu))^{[-1]})^G$, which can be determined from the multiplicity results mentioned
above. Similar comments apply to parts (b), (c) of the theorem.  See \cite[Conj.]{APS} for a general conjecture
related to the above observations.

(c) When $p>h$, the dimensions of the spaces $\Ext^n_G(\Delta(\lambda)^{[1]},\Delta(\mu))$, $\Ext^n_G(\Delta(\lambda),
\nabla(\mu)^{[1]})$, and $\Ext^n_G(\Delta(\lambda)^{[1]},\nabla(\mu))$ are determined in \cite[Thm. 5.4]{CPS09}. In this
case, $\rDelta(p\lambda)\cong\Delta(\lambda)^{[1]}$ and $\rnabla(p\lambda)\cong\nabla(p)^{[1]}$.
\end{rems}

An essential ingredient in the proof of Theorem \ref{goodfiltrationtheorem} involves the construction of certain
explicit exact complexes $\Xi_\bullet\twoheadrightarrow M$. Here $M$ is a graded $\wgr A$-module, where
$A=A_\Gamma$ for a finite ideal $\Gamma$ of $p$-regular dominant weights. There is an increasing sequence
$\Gamma\subseteq\Gamma_0\subseteq\Gamma_1\subseteq\cdots$ of finite ideals of $p$-regular weights such that
each term $\Xi_i$, $i\geq 0$,
is a graded $\wgr A_{\Gamma_i}$-module, and the map $\Xi_i\to\Xi_{i-1}$ is a morphism in $\wgr A_{\Gamma_i}$-grmod.
This makes sense because $\Xi_{i-1}$ can be regarded as a graded $\wgr A_{\Gamma_i}$-module through the
surjective algebra homomorphism $\wgr A_{\Gamma_i}\twoheadrightarrow\wgr A_{\Gamma_{i-1}}$.
In addition, when
$M$ is regarded as a graded $\wfa$-module, it is required that it be $\fa$-linear.\footnote{The term ``linear" generally refers to a certain kind of graded complex , though we often use it to refer to graded objects which have a resolution by such a complex, the nature of which depends on context. Typically, the objects in the $i^th$ term of any linear complex are shifts $X_i=X\langle i\rangle$ of graded objects $X$ which have some preferred or standard form for their grading--e.g., a projective object generated in grade 0, or an injective object with socle of grade 0.  Mazorchuk [Mazor10] even defines linear complexes of tilting modules. One useful aspect of having a terminology "linear" for objects, and not just complexes, is that it is often useful to consider resolutions of linear objects which may not be linear, as we are doing here.}
 In other words, if $P_\bullet\twoheadrightarrow M$ is a minimal
graded $\fa$-projective resolution, then $\ker(P_{i+1}\to P_i)$ is generated by its terms of grade $i+2$. When $p$
is sufficiently large that the LCF holds, then linearity holds for the modules $\rDelta(\lambda)$ (since
$\fa$ is Koszul). In addition, the standard modules $\Delta(\lambda)$ for $p$-regular dominant weights $\lambda$ are also
linear (see Theorem \ref{theorem6.3} below).  Further, the resolution can be chosen so that
each $\Xi_i$, when viewed as an $\fa$-module through the map $\fa\to\wgr A_{\Gamma_i}$, is projective. Also,
the $\Xi_i$ and syzygy modules $\Omega_{i+1}=\ker(\Xi_i\to\Xi_{i-1})$ have $\rDelta$-filtrations.

We conclude this section with two final results which will be important later. Observe that part (a) of the
first theorem and all the second theorem do not require any assumptions about the LCF. The proof of the first
theorem is given in \cite[Thm. 6.3]{PS13b} and the proof of the second is found in \cite[Thm. 5.3(b), Thm. 6.5]{PS13b}.

\begin{thm}\label{theorem6.3}  Assume that $p\geq 2h-2$ is odd.

(a) For a $p$-regular dominant weight $\lambda$, the standard module $\Delta(\lambda)$ has a graded $\fa$-module
structure,
 isomorphic to $\wgr\Delta(\lambda)$
over $\wgr\fa\cong\fa$.

(b) Assume that the Lusztig character formula holds. With the graded structure given in (a), $\Delta(\lambda)$ is linear over the graded algebra $\fa$.
\end{thm}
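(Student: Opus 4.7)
The plan is to work integrally via $\wA = \wU_{\zeta,\Gamma}$, where $\Gamma$ is a finite ideal of $p$-regular dominant weights containing $\lambda$, and its forced-graded form $\gr\wA$, supplied by Theorem \ref{mainquantumresult} as an integral quasi-hereditary $\sO$-order with standard modules $\gr\wDelta(\lambda)$. The filtration $F^i\wDelta(\lambda):=\wDelta(\lambda)\cap\rad^i\wA_K$ realizes this associated graded; since $\wfa\subset\wA$, it is automatically $\wfa$-stable, and base-change to $k$ yields an $\fa$-filtration of $\Delta(\lambda)$ whose associated graded is $\wgr\Delta(\lambda)$, viewed as a graded $\fa$-module via the isomorphism $\fa\cong\wgr\fa$ supplied by the integral grading of \cite[Thm.~8.1]{PS5.5} on $\wfa$ (available for $p > h$, as noted at the end of \S2).

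For part (a), the statement amounts to showing that the above $\fa$-filtration of $\Delta(\lambda)$ splits, which would then transport the grading on $\wgr\Delta(\lambda)$ to a graded $\fa$-module structure on $\Delta(\lambda)$ itself. To obtain the splitting I would first pass to the generic fibre: since $p > h$, the quantum LCF holds for $U_\zeta$ at $e = p$, so $\wA_K$ is Koszul by \cite{AJS} and hence isomorphic to $\gr\wA_K$, producing an isomorphism $\wDelta(\lambda)_K \cong \gr\wDelta(\lambda)_K$ of $\wA_K$-modules, a fortiori of $\wfa_K$-modules. The remaining task is to identify the $\wfa$-stable $\sO$-lattices $\wDelta(\lambda)$ and $\gr\wDelta(\lambda)$ inside this common $\wfa_K$-module up to $\wfa$-isomorphism, using the PS5.5 compatibility between the grading on $\wfa$ and the radical filtration of $\wA$ to propagate the identification from the generic fibre down to the integral level. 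This lattice identification is the main obstacle I expect: distinct $\wfa$-lattices in a given $\wfa_K$-module can have non-isomorphic reductions modulo the uniformizer of $\sO$, so the interlock between the two gradings has to be used essentially.

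For part (b), with the LCF in force $\fa$ is Koszul by \cite{AJS}, and Theorem \ref{cor5.2} supplies a $\rDelta$-filtration of $\Delta(\lambda)$. A stronger version along the lines of \cite[Conj.~6.11]{CPS09}, valid in the regime of \cite{PS5}, gives such a filtration that is compatible with the grading from part (a); it therefore descends to a filtration of the graded $\fa$-module $\wgr\Delta(\lambda)$ whose successive quotients are graded shifts of $\wgr\rDelta(\mu)$. Each such quotient, restricted over $\fa$, is a direct sum of copies of a single simple $\wfa$-module concentrated in one grade, and is therefore linear by Koszulity of $\fa$. Linearity of $\Delta(\lambda)$ then follows by induction up the filtration, using that linearity is preserved under graded-compatible extensions of linear modules over the Koszul algebra $\fa$; concretely, one checks the required parity vanishing of $\grExt^i_\fa(\Delta(\lambda),L(\nu))$ via the long exact sequences attached to each step, in which the grade shifts of the $\rDelta$-layers are dictated by their position in the filtration and match the linearity indices in the correct way.
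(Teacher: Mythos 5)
The paper itself does not contain a proof of Theorem \ref{theorem6.3} --- it defers to \cite[Thm.~6.3]{PS13b} --- so I can only assess your proposal on its own merits. You have assembled the right cast of characters: the integral order $\wA=\wU_{\zeta,\Gamma}$ and its forced grading from Theorem \ref{mainquantumresult}, the PS12b grading on $\wfa$ with $\wgr\fa\cong\fa$, Koszulity of $\wA_K$ and of $\fa$ via \cite{AJS}, and the $\rDelta$-filtration theory from \cite{PS5}/\cite{CPS09}. The reduction of (a) to the statement that the filtration $F^\bullet\Delta(\lambda)$ splits as an $\fa$-module filtration is also correct. But there are two gaps, one acknowledged and one not.

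For part (a), as you say yourself, the generic-fibre isomorphism $\wDelta(\lambda)_K\cong\gr\wDelta(\lambda)_K$ is only an abstract $\wA_K$-isomorphism; it gives no canonical embedding of the two $\wfa$-lattices into a common $\wfa_K$-module, and even after choosing one, two $\wfa$-lattices in the same $\wfa_K$-module can certainly have non-isomorphic reductions modulo the maximal ideal of $\sO$. This is not a detail to be filled in by ``propagating the identification'' --- it \emph{is} the theorem. Your sketch stops exactly where the content begins, and nothing in the proposal indicates how the PS12b compatibility is to be used to force the lattices (or at least their reductions) to agree over $\wfa$. Without that, part (a) is unproved.

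For part (b), the layers $\rDelta(\mu)|_\fa$ are, under the LCF, direct sums of a single restricted simple, and they sit in a single grade of $\wgr\Delta(\lambda)$; that much is right. But the step ``linearity is preserved under graded-compatible extensions'' is false as a general principle, and that is precisely what you would be leaning on. Having each grade $M_d$ of a graded $\fa$-module be a semisimple $\fa_0$-module is automatic when $\fa_0$ is semisimple and says nothing; a positively graded module generated in grade $0$ with semisimple grades need not have a linear minimal projective resolution. The long exact sequences you invoke produce, for each $i$, potentially nonvanishing $\grExt^i_\fa(M,L\langle i+1\rangle)$ coming from the $i$-th Ext of the subobject $M_{\geq 1}\langle -1\rangle$, and one needs an actual argument (a connecting-map computation, a Koszul-complex calculation, or a comparison with the known linear resolution of $\Delta_\zeta(\lambda)$ over $u_\zeta$ transported through the integral form) to kill those terms. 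As written, the induction does not close.

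In short: the framework is the right one, and your identification of the obstruction in (a) shows good instincts, but both parts are missing the decisive lemma. For (a) that lemma must pin down the $\wfa$-isomorphism type of the reduction of the graded lattice, presumably via the explicit interlocking of the PS12b grading on $\wfa$ with the radical filtration of $\wA_K$; for (b) one needs either a genuine Ext-vanishing argument or a base-change from an already-known linear resolution at the quantum/integral level, not an appeal to a nonexistent ``extensions preserve linearity'' principle.
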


\begin{thm}\label{comparisonofcohomology} Assume that $p\geq 2h-2$ is odd.  Let $\lambda,\mu$ be $p$-regular
dominant weights contained in a finite ideal $\Gamma$ of $p$-regular weights. Then there are
graded isomorphisms

$$\begin{cases}\Ext^\bullet_{G}(\Delta(\lambda),\rnabla(\mu))\cong\Ext^\bullet_A(\Delta(\lambda),\rnabla(\mu))\cong
\Ext^\bullet_{\wgr A}(\wgr \Delta(\lambda),\rnabla(\mu)),\\
\Ext^\bullet_G(\rDelta(\lambda),\rnabla(\mu))\cong\Ext^\bullet_A(\rDelta(\lambda),\rnabla(\mu))
\cong\Ext^\bullet_{\wgr A}(\rDelta(\lambda),\rnabla(\mu)),\\
\Ext^\bullet_G(\rDelta(\lambda),\nabla(\mu))\cong\Ext^\bullet_A(\rDelta(\lambda),\nabla(\mu))
\cong\Ext^\bullet_{\wgr A}(\rDelta(\lambda),\wgr^\diamond\nabla(\mu)),\end{cases}
$$
  where $\wgr^\diamond\nabla(\mu)$ denotes the
dual Weyl module for $\wgr A$ of highest weight $\mu$. \end{thm}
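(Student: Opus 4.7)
My plan is to split each of the three rows into two comparisons: the forgetful identification $\Ext^\bullet_G\cong\Ext^\bullet_A$, followed by the forced-grading comparison $\Ext^\bullet_A\cong\Ext^\bullet_{\wgr A}$. The first comparison is essentially formal: as recalled in Section 1, $A=\Dist(G)_\Gamma$ is the quotient of $\Dist(G)$ by the annihilator of all rational $G$-modules with composition factors having highest weights in $\Gamma$, so the inflation $\Amod\hookrightarrow\Gmod$ is fully faithful and preserves Ext between modules whose composition factors lie in $\Gamma$. Since $\lambda,\mu\in\Gamma$, each of $\Delta(\lambda),\rDelta(\lambda),\nabla(\mu),\rnabla(\mu)$ qualifies, and the $G$-to-$A$ identifications follow in every row.

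For the second comparison I would exploit the exact complex $\Xi_\bullet\twoheadrightarrow M$ constructed in the discussion preceding Theorem \ref{theorem6.3}, taking $M=\Delta(\lambda)$ with its graded $\wgr A$-structure from Theorem \ref{theorem6.3}(a), or $M=\rDelta(\lambda)$ with its inherited grading. Each $\Xi_i$ is projective as a graded module over a suitable $\wgr A_{\Gamma_i}$, carries a $\rDelta$-filtration, and is $\fa$-projective via $\fa\to\wgr A_{\Gamma_i}$; under the LCF, Theorem \ref{theorem6.3}(b) together with the Koszulity of $\fa$ makes $M$ into an $\fa$-linear object, so the resolution has the correct shape for computing Ext in both the graded and the ungraded settings. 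Lifting $\Xi_\bullet$ to an integral resolution $\wXi_\bullet\twoheadrightarrow\wM$ over $\wA$ and specializing to $k$ via the filtration $F^iA=A\cap\rad^iA_K$ whose associated graded is $\wgr A$ yields an $A$-projective resolution of $M$. Applying graded $\Hom_{\wgr A}(\Xi_\bullet,-)$ against $\rnabla(\mu)$, which is concentrated in a single compatible grade, gives the isomorphism for the first two rows directly, since graded and ungraded $\Hom$ into $\rnabla(\mu)$ coincide on this complex.

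The third row demands an extra step: $\nabla(\mu)$ carries a natural filtration over the filtered algebra $A$ whose associated graded is, by construction, the dual Weyl module $\wgr^\diamond\nabla(\mu)$ for $\wgr A$. The filtered-to-graded spectral sequence then reads $E_1^{*,*}=\Ext^\bullet_{\wgr A}(\rDelta(\lambda),\wgr^\diamond\nabla(\mu))\Rightarrow\Ext^\bullet_A(\rDelta(\lambda),\nabla(\mu))$, and I would force it to degenerate at $E_1$ by an Ext$^1$-vanishing argument in the spirit of \cite[Thm.4.4]{PS5}: $\wgr^\diamond\nabla(\mu)$ has a costandard filtration, and the syzygies of $\Xi_\bullet$ carry $\rDelta$-filtrations, so the relevant obstruction groups vanish. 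I expect this degeneration, together with the precise identification of the associated graded of $\nabla(\mu)$ as $\wgr^\diamond\nabla(\mu)$, to be the main obstacle: the bookkeeping of grade shifts and the verification that $\fa$-projectivity of each $\Xi_i$ genuinely controls the integral-to-$k$ specialization require careful attention, with Koszulity of $\fa$ (guaranteed by the LCF hypothesis) serving as the indispensable input that powers the linearity needed for the Ext$^1$-vanishing.
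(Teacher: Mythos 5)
Your proposal has a genuine gap: it invokes the LCF at several load-bearing points, but Theorem \ref{comparisonofcohomology} does not assume the LCF. The paper is explicit about this: right before the two closing theorems of \S3 it says ``Observe that part (a) of the first theorem and all the second theorem do not require any assumptions about the LCF.'' Your step two leans on Theorem \ref{theorem6.3}(b) to get $\fa$-linearity of $M$, on the Koszulity of $\fa$ (which, per the end of \S1, comes from \cite{AJS} and requires the LCF), and you even flag ``Koszulity of $\fa$ (guaranteed by the LCF hypothesis)'' as the ``indispensable input'' powering the Ext$^1$-vanishing that degenerates your spectral sequence in the third row. None of that is available under the hypotheses of the theorem, which only require $p \geq 2h-2$ odd and $p$-regularity. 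An argument that proves the theorem only when the LCF holds is proving a weaker statement than what is claimed.

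What the weaker-hypothesis proof in \cite[Thm.~5.3(b), Thm.~6.5]{PS13b} must exploit is the grading on $\fa$ that exists already for $p>h$ without the LCF: as described at the end of \S2, the integral form $\wfa$ has a positive grading (base-changing to the Koszul grading on $u'_\zeta$), which induces a positive grading on $\fa$ that only agrees with a Koszul grading when the LCF holds. The comparison of $\Ext^\bullet_A$ with $\Ext^\bullet_{\wgr A}$ therefore has to be run off this non-Koszul grading, together with the fact (Theorem \ref{theorem6.3}(a), which also holds without the LCF) that $\Delta(\lambda)\cong\wgr\Delta(\lambda)$ as a graded $\fa$-module. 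Your first comparison $\Ext^\bullet_G\cong\Ext^\bullet_A$ is fine and formal, but the second comparison cannot be routed through the $\fa$-linearity/Koszulity package; you would need to replace that with an argument relying only on the grade-by-grade $\rDelta$-filtration of $\wgr\Delta(\lambda)$ and the costandard-filtration properties of the graded injective lattices, as in the integral Ext$^1$-criterion of \cite[Thm.~4.4]{PS5}, rather than on linearity.
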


In the process of proving these results, new graded homological properties of reduced standard and costandard modules emerged. These
are of a graded nature, and their
description is best described in the context of a new class of graded algebras, which we now introduce.

\section{Q-Koszul algebras} This section introduces the notion of a Q-Koszul algebra and a standard Q-Koszul
algebra \cite{PS13b}, \cite{PS13d}, \cite{PS14}. We work with
finite dimensional algebras $A$ over a field $k$ of characteristic $p\geq 0$. Generally, we assume that $A$ is split
over $k$ in the sense that the irreducible $A$-modules remain irreducible upon extension to $\overline k$. Also, assume that $A$
has an $\mathbb N$-grading
$$ A=\bigoplus_{i=0}^\infty A_j,\quad A_iA_j\subseteq A_{i+j}.$$
The subspace $A_i$ is called the term in grade $i$. The nilpotent ideal
$A_{>0}:=\bigoplus_{i>0} A_i$ plays an important role. The quotient algebra $A/A_{>0}$ is isomorphic to the subalgebra
$A_0$ of $A$. In general, $A_0$ is {\it not} assumed to be semisimple.

Let $\grAmod$ be the category of finite dimensional $\mathbb Z$-graded
$A$-modules $M$. Thus,
$$M=\bigoplus_{i\in\mathbb Z}M_i, \quad A_jM_i\subseteq M_{i+j}.$$
Also, $\Amod$ is the category of finite dimensional $A$-modules (no grading). There is a natural forgetful functor
$\grAmod\to\Amod$. Let $\Lambda=\Lambda_A$ be a fixed finite set indexing the distinct isomorphism classes of irreducible $A$-modules:
$$\lambda\in\Lambda \longleftrightarrow L(\lambda)\in\Amod.$$
If $M\in\grAmod$ and $r\in\mathbb Z$, then $M\langle r\rangle$ is the graded $A$-module obtained from $M$
by ``shifting" the grading $r$ steps to the right: $M\langle r\rangle_i:=M_{i-r}$. Clearly, the set $\Lambda\times {\mathbb Z}$
indexes the isomorphism class of irreducible objects in $\grAmod$: if $r\in\mathbb Z$ and $\lambda\in\Lambda$,
the pair $(\lambda,r)$ corresponds to the graded irreducible module $L(\lambda)\langle r\rangle$. Up to isomorphism
every graded irreducible module is isomorphic to a unique $L(\lambda)\langle i\rangle$. The set $\Lambda$ also
indexes the isomorphism classes of irreducible $A_0$-modules.

The categories $\Amod$ and $\grAmod$ both have enough projective and injective modules. If $P(\lambda)$ denotes
the PIM of $L(\lambda)$ in $\Amod$, then $P(\lambda)$ has a unique  $\mathbb N$-grading making it an object in
$\grAmod$ such that $P(\lambda)_0\cong L(\lambda)$ as a $A_0$- or a $A/A_{>0}$-module. Given $M,N\in\grAmod$,
the Ext-groups in $\grAmod$, are denoted $\grExt^n_A(M,N)$, $n=0,1,\cdots$. These are related to the $\Ext$-groups
in $\Amod$ by means of the identity:
$$\Ext^n_A(M,N)\cong \bigoplus_{r\in\mathbb Z}\grExt^n(M,N\langle r\rangle).$$

\begin{defn}\label{basicdefn} Let $A$ be an $\mathbb N$-graded algebra, but assume that there is a poset structure on $\Lambda$
with respect to which $A_0$ is a quasi-hereditary algebra. For $\lambda\in\Lambda$, let $\Delta_0(\lambda)$
(resp., $\nabla^0(\lambda)$) denote the corresponding standard (resp., costandard) $A_0$-module, regarded as
a graded $A$-module. Then $A$ is called Q-Koszul provided, for all integers $i>0$,
\begin{equation}\label{QKproperty}\grExt^i_A(\Delta_0(\lambda),\nabla^0(\mu)\langle j\rangle)\not=0\implies i=j, \quad\forall j\in{\mathbb Z},
 \lambda,\mu\in\Lambda.\end{equation}
 If $n\geq 0$ and (\ref{QKproperty}) holds for $0<i\leq n$, then $A$ is called $n$-Q-Koszul.
 \end{defn}

 Thus, $A$ is Q-Koszul if it is $n$-Q-Koszul for all $n>0$. The following result, found in \cite[Thm. 2.3]{PS14},
 shows that the properties of being
1- or 2-Koszul are already quite strong.

\begin{thm} (a) Assume that $A$ is 1-Q-Koszul as explained in Definition \ref{basicdefn}. Then $A$ is tight, in the
sense that for any $n\geq 1$, $A_n$ is the product $\underbrace{A_1\cdots A_1}_n$.

\smallskip
(b) Assume that $A$ is 2-Q-Koszul. Then $A$ is a quadratic algebra, in the following sense. Let
$$T_{A_0}(A_1)=\bigoplus_{n=0}^\infty T_{A_0}^n(A_1),\quad{\text{\rm where}}\,\, T_{A_0}^n(A_1):=A_1\otimes_{A_0}\otimes_{A_0}\cdots\otimes_{A_0}A_1$$
be the tensor algebra of the $(A_0,A_0)$-bimodule $A_1$. Then the mapping $T_{A_0}(A_1)\to A$,
$a_1\otimes\cdots\otimes a_n\mapsto a_1\cdots a_n$, is surjective with kernel generated by its terms in grade 2.
\end{thm}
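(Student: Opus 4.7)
The plan is to prove both parts by contradiction: if the conclusion fails, construct an explicit extension that violates the relevant Q-Koszul vanishing.

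For part (a), the first step is to parametrize graded $1$-extensions explicitly. For $j \ge 2$, any short exact sequence
\[ 0 \to \nabla^0(\mu)\langle j\rangle \to M \to \Delta_0(\lambda) \to 0 \]
in $\grAmod$ has $M$ concentrated in grades $0$ and $j$, with $M_0 = \Delta_0(\lambda)$ and $M_j = \nabla^0(\mu)$, and the only non-$A_0$ action datum is the map $A_j \otimes_{A_0} \Delta_0(\lambda) \to \nabla^0(\mu)$ induced by $A_j\cdot M_0 \subseteq M_j$. Since $A_i M_0 \subseteq M_i = 0$ for $0 < i < j$, associativity forces this map to kill the sub-bimodule $(A_{>0}^2)_j = \sum_{i=1}^{j-1} A_i A_{j-i}$, while the fact that any morphism of extensions must act as the identity on $M_0$ shows that the extensions are classified without further equivalence. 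This yields
\[ \grExt^1_A(\Delta_0(\lambda), \nabla^0(\mu)\langle j\rangle) \cong \Hom_{A_0}\bigl((A_j/(A_{>0}^2)_j) \otimes_{A_0} \Delta_0(\lambda),\nabla^0(\mu)\bigr) \]
for $j \ge 2$. If $A$ were not tight, then some $V := A_j/(A_{>0}^2)_j$ with $j \ge 2$ would be nonzero. Picking $\nu$ with $V e_\nu \ne 0$ gives $V \otimes_{A_0} P_0^{A_0}(\nu) \ne 0$, and the $\Delta_0$-filtration of the $A_0$-projective $P_0^{A_0}(\nu)$ combined with right-exactness of $V \otimes_{A_0} -$ produces some $\lambda$ with $V \otimes_{A_0} \Delta_0(\lambda) \ne 0$. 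A simple quotient $L_0(\mu)$ of that module embeds into $\nabla^0(\mu)$, producing a nonzero extension class in the forbidden grade and contradicting 1-Q-Koszul.

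For part (b), (a) yields a surjection $T := T_{A_0}(A_1) \twoheadrightarrow A$ with homogeneous kernel $I$ satisfying $I_0 = I_1 = 0$ and $I_2 = \ker(A_1 \otimes_{A_0} A_1 \to A_2)$. Let $B := T/(I_2)$ be the quadratic cover, so that $B \twoheadrightarrow A$ is an isomorphism in degrees $\le 2$. Supposing injectivity fails, let $n \ge 3$ be minimal with $K_n := I_n / (I_2)_n \ne 0$; then $A$ and $B$ agree through degree $n-1$. The plan is to convert $K_n$, a nonzero $A_0$-bimodule sitting in grade $n$, into a nonzero class in $\grExt^2_A(\Delta_0(\lambda), \nabla^0(\mu)\langle n\rangle)$ for suitable $\lambda,\mu$, contradicting 2-Q-Koszul since $n \ge 3$. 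I would analyze the minimal graded projective resolution of $\Delta_0(\lambda)$ over $A$: tightness from (a) forces the first syzygy $\Omega^1_A$ to be generated in grades $\le 1$, so the first projective $P_1^A$ has its head in grades $\le 1$ as well; the failure of the lift $B \to A$ at grade $n$ then contributes an obstruction to $\Omega^2_A$ at grade $n$ carrying the bimodule $K_n$, and the same bimodule-to-$\Hom$ reduction used in (a) extracts a nonzero map to some $\nabla^0(\mu)$.

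The main obstacle is this last step: graded $2$-extensions lack the clean elementwise parametrization available for $1$-extensions, so identifying the grade-$n$ obstruction coming from $K_n$ with a genuine nonzero $\grExt^2$ class requires careful bookkeeping in the minimal resolution. A likely cleaner alternative is to compare the minimal resolutions of $\Delta_0(\lambda)$ over $A$ and over $B$ directly: both first syzygies agree in grades $< n$, and the obstruction to their agreement at the second syzygy in grade $n$ is precisely the desired $\grExt^2_A$ class.
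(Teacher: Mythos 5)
Your part (a) is correct. The explicit parametrization
$$\grExt^1_A(\Delta_0(\lambda),\nabla^0(\mu)\langle j\rangle)\;\cong\;\Hom_{A_0}\bigl((A_j/(A_{>0}^2)_j)\otimes_{A_0}\Delta_0(\lambda),\nabla^0(\mu)\bigr)\qquad(j\ge 2)$$
is right: the middle term of any such extension is supported in grades $0$ and $j$, the only non-$A_0$ datum is the action $A_j\otimes_{A_0}\Delta_0(\lambda)\to\nabla^0(\mu)$, associativity kills $(A_{>0}^2)_j$, and compatibility with the end maps forces equivalences of extensions to be the identity. You glide over the right-exactness step, but it closes: if $V\otimes_{A_0}\Delta_0(\lambda)=0$ for all $\lambda$, then successively applying $V\otimes_{A_0}-$ to a $\Delta_0$-filtration of $P_0^{A_0}(\nu)$ shows each inclusion becomes surjective after tensoring, so $V\otimes_{A_0}P_0^{A_0}(\nu)=Ve_\nu=0$ for every $\nu$, whence $V=0$. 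The final step (simple quotient embeds in $\nabla^0(\mu)$) is standard for quasi-hereditary $A_0$. Note also that tightness is equivalent to $A_j=(A_{>0}^2)_j$ for all $j\ge2$ by an easy induction, so your reformulation is legitimate.

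Part (b) has a genuine gap, which you yourself flag. The step you do not supply --- turning the first nonzero $K_n=I_n/(I_2)_n$, $n\ge3$, into a nonzero class in $\grExt^2_A(\Delta_0(\lambda),\nabla^0(\mu)\langle n\rangle)$ --- is exactly where the work lies, and the route you sketch has a structural obstruction. You argue from tightness that $\Omega^1$ is generated in grades $\le 1$ (correct), hence $P_1$ has head in grades $\le 1$ (correct). But to conclude that a failure of quadraticity in degree $n$ shows up in $\grExt^2$ against a \emph{costandard} module, you need to control the second syzygy, and the 2-Q-Koszul hypothesis constrains $\grExt^2(\Delta_0(\lambda),\nabla^0(\mu)\langle j\rangle)$, not $\grExt^2(\Delta_0(\lambda),L_0(\mu)\langle j\rangle)$. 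These are genuinely different: the long exact sequence attached to $0\to L_0(\mu)\to\nabla^0(\mu)\to Q\to 0$ shows that $\grExt^2(\Delta_0(\lambda),L_0(\mu)\langle j\rangle)$ receives a contribution from $\grExt^1(\Delta_0(\lambda),Q\langle j\rangle)$ which the Q-Koszul axiom does not kill, so the head of $P_2$ is not directly bounded by the hypothesis. One needs a more careful construction, e.g.\ an explicit $2$-extension (or cocycle) built from $K_n$ whose nonvanishing is tested against $\nabla^0(\mu)$ rather than a simple, roughly parallel to your clean argument in (a). As written, (b) is a plausible plan, not a proof.

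Finally, note that the paper itself does not prove this theorem in the text; it cites \cite[Thm.\ 2.3]{PS14}. So there is no in-paper argument against which to compare routes; the assessment above is of your argument's internal soundness.
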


Now let $A=\bigoplus_{n\geq 0}A_n$ be a (positively) graded quasi-hereditary algebra with weight poset $\lambda$. It is
elementary to show that $A_0$ is also quasi-hereditary with weight poset $\lambda$. The standard (resp., costandard)
modules for $A_0$ are just the grade 0 components $\Delta^0(\lambda)$ (resp., $\nabla_0(\lambda)$) of
the standard (resp., costandard) modules of $A$.

\begin{defn}\label{standardQKoszul} The  positively graded algebra $A$ is a {\it standard Q-Koszul algebra} provided
that, for all $\lambda,\mu\in\Lambda$,
\begin{equation} \label{conditions}
 \begin{cases}(a)\quad\grExt^n_A(\Delta(\lambda),\nabla_0(\mu)\langle r\rangle)\not=0
\implies n=r;\\
(b)\quad \grExt^n_A(\Delta^0(\mu),\nabla(\lambda)\langle r\rangle)\not=0\implies n=r\end{cases}\end{equation}
for all integers $n,r$.\end{defn}

The grading on $A$ matters: Observe that any quasi-hereditary algebra $A$, given the trivial grading $A=A_0$ is Q-Koszul and even standard Q-Koszul. A Koszul algebra
is Q-Koszul, a more nontrivial example. Following Mazorchuk \cite{Mazor1}, a Koszul algebra $A$ is called standard Koszul provided that $A$
is quasi-hereditary, and if each standard (costandard) module $\Delta(\lambda)$ (resp., $\nabla(\lambda)$) is linear.
Thus, if $\Delta(\lambda)$ is given the unique grading with its head in grade 0, then it has a (graded) projective resolution
$P_\bullet\twoheadrightarrow \Delta(\lambda)$ in which the head of $P_i$ has grade $i$. A dual property is required for
the costandard modules. In particular, a standard Koszul algebra is standard Q-Koszul.

The following theorem, proved in \cite[Cor. 3.4]{PS14}, improves on the definition in \cite{PS13b}, showing that
standard Q-Koszul algebras as defined above are automatically Q-Koszul.

\begin{thm}\label{betterdefn} Let $A$ be a standard Q-Koszul algebra with weight poset $\lambda$. For
$\lambda,\mu\in\Lambda$,
$$\grExt^n_A(\Delta^0(\lambda),\nabla_0(\mu)\langle r\rangle)\not=0\implies n=r$$
for all $n\geq 0$ and all $r\in\mathbb Z$. In particular, $A$ is Q-Koszul.  \end{thm}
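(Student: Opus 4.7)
The plan is to induct on the cohomological degree $n$, exploiting the two natural short exact sequences of graded $A$-modules
\begin{equation*}
0\to\Delta(\lambda)_{>0}\to\Delta(\lambda)\to\Delta^0(\lambda)\to 0,\qquad 0\to\nabla(\mu)_{>0}\to\nabla(\mu)\to\nabla_0(\mu)\to 0,
\end{equation*}
where each subscript $>0$ denotes the positive-grade part, which is an $A$-submodule since $A$ is nonnegatively graded. The base case $n=0$ is immediate: a graded $A$-morphism $\Delta^0(\lambda)\to\nabla_0(\mu)\langle r\rangle$ between modules concentrated respectively in grades $0$ and $r$ must vanish for $r\neq 0$, and the $r=0$ case is just $\Hom_{A_0}(\Delta^0(\lambda),\nabla_0(\mu))$, which is well understood from the quasi-heredity of $A_0$.

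For the inductive step with $n\geq 1$ and $n\neq r$, the idea is a two-sided squeeze: condition (a) handles the range $n>r$ and condition (b) handles $n<r$. For $n>r$, apply $\grHom_A(-,\nabla_0(\mu)\langle r\rangle)$ to the $\Delta$-sequence. Condition (a) kills $\grExt^n_A(\Delta(\lambda),\nabla_0(\mu)\langle r\rangle)$, so it is enough to show that $\grExt^{n-1}_A(\Delta(\lambda)_{>0},\nabla_0(\mu)\langle r\rangle)=0$. Filter $\Delta(\lambda)_{>0}$ by its grade pieces; each section is concentrated in a single grade $i\geq 1$ and hence is naturally an $A_0$-module. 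Refining further by an $A_0$-standard filtration, the overall subquotients are of the form $\Delta^0(\nu)\langle i\rangle$ with $\nu\in\Lambda$ and $i\geq 1$. By the inductive hypothesis at level $n-1$ applied to $(\nu,\mu,r-i)$, each
\[
\grExt^{n-1}_A(\Delta^0(\nu)\langle i\rangle,\nabla_0(\mu)\langle r\rangle)=\grExt^{n-1}_A(\Delta^0(\nu),\nabla_0(\mu)\langle r-i\rangle)
\]
vanishes unless $n-1=r-i$, i.e.\ unless $i=r-n+1$, which requires $n\leq r$. Since $n>r$, every subquotient contributes zero, and an iterated application of the long exact sequence gives the required vanishing.

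The case $n<r$ is handled dually by applying $\grHom_A(\Delta^0(\lambda),-)$ to the $\nabla$-sequence. Condition (b) kills $\grExt^n_A(\Delta^0(\lambda),\nabla(\mu)\langle r\rangle)$, reducing the question to $\grExt^{n+1}_A(\Delta^0(\lambda),\nabla(\mu)_{>0}\langle r\rangle)=0$. Filter $\nabla(\mu)_{>0}$ by its grade pieces and refine each piece by an $A_0$-costandard filtration; the subquotients are of the form $\nabla_0(\nu)\langle i\rangle$ with $i\geq 1$. The inductive hypothesis applied to $(\lambda,\nu,r+i)$ at level $n+1$ forces $\grExt^{n+1}_A(\Delta^0(\lambda),\nabla_0(\nu)\langle r+i\rangle)=0$ unless $n+1=r+i$, i.e.\ unless $i=n+1-r\geq 1$, which requires $n\geq r$ and contradicts $n<r$. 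Combining the two ranges completes the induction.

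The main obstacle, and the technical heart of the argument, is the structural input used to refine the grade pieces of $\Delta(\lambda)$ and $\nabla(\mu)$ by $A_0$-standard and $A_0$-costandard filtrations respectively. This is not a formal consequence of the definition of a standard Q-Koszul algebra: it expresses a compatibility of the grading of $\Delta(\lambda)$ and $\nabla(\mu)$ with the quasi-hereditary structure of $A_0$ that has to be extracted from the low-degree implications of hypotheses (a) and (b) (in particular from the quadraticity of $A$ over $A_0$ shown earlier) together with the fact that $\Delta(\lambda)$ is generated over $A$ by $\Delta^0(\lambda)$ in grade $0$ and, dually, that $\nabla(\mu)$ is cogenerated by $\nabla_0(\mu)$. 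Once this filtration statement is established, the induction described above closes cleanly and yields $\grExt^n_A(\Delta^0(\lambda),\nabla_0(\mu)\langle r\rangle)=0$ for all $n\neq r$, proving that a standard Q-Koszul algebra is automatically Q-Koszul.
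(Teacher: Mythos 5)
The central idea—reduce from $\Delta^0(\lambda)$, $\nabla_0(\mu)$ to $\Delta(\lambda)$, $\nabla(\mu)$ via the ``grade-zero'' short exact sequences and then invoke conditions (a), (b)—is natural, and the half of the argument using the $\Delta$-sequence does give the range $n>r$. But the proposal has a genuine gap in the complementary range $n<r$, and it stems from a grading-convention error that is more than cosmetic.

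In this paper (and in the cited sources), a graded quasi-hereditary algebra has $\Delta(\lambda)$ concentrated in grades $\geq 0$ with head in grade $0$, while $\nabla(\mu)$ is concentrated in grades $\leq 0$ with socle in grade $0$; this is stated explicitly in the discussion of $\wgr^{\diamond}\nabla$ in \S7. Consequently $\nabla(\mu)_{>0}=0$, so your sequence $0\to\nabla(\mu)_{>0}\to\nabla(\mu)\to\nabla_0(\mu)\to 0$ degenerates, and the correct sequence runs the other way: $0\to\nabla_0(\mu)\to\nabla(\mu)\to\nabla(\mu)/\nabla_0(\mu)\to 0$, with the cokernel living in grades $\leq -1$. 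Re-running your argument with this sequence, condition (b) lets you bound $\grExt^n_A(\Delta^0(\lambda),\nabla_0(\mu)\langle r\rangle)$ by a quotient of $\grExt^{n-1}_A(\Delta^0(\lambda),(\nabla(\mu)/\nabla_0(\mu))\langle r\rangle)$; filtering the cokernel produces shifts $\langle r-i\rangle$ with $i\geq 1$, and the same arithmetic as on the $\Delta$ side shows that every subquotient is forced to vanish only when $n>r$. So both sequences close the same half of the squeeze; nothing in the argument addresses $n<r$.

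You actually cannot rescue the $n<r$ half by flipping to your (incorrect) convention, because then the sections carry shifts $\langle r+i\rangle$, and the vanishing you need is $\grExt^{n+1}_A(\Delta^0(\lambda),\nabla_0(\nu)\langle r+i\rangle)=0$ at cohomological degree $n+1>n$. As written, that is not covered by an upward induction on $n$: you would be quoting the theorem in the very degree regime you are trying to prove. It could conceivably be set up as a downward induction on $n$ anchored at the (finite) global dimension of $A$, but you do not say that, and in any case the whole branch relies on a grading for $\nabla(\mu)$ that contradicts the paper's.

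Finally, as you yourself note, the step that filters each grade piece $\Delta(\lambda)_i$ by $A_0$-standard modules $\Delta^0(\nu)$ (and dually for $\nabla(\mu)$) is not established; it is a nontrivial compatibility of the grading with the quasi-hereditary structure of $A_0$, and leaving it as ``to be extracted from the low-degree implications of (a) and (b)'' is exactly where the real work of the cited proof lies. Taken together—the unclosed $n<r$ case and the unproved filtration lemma—the proposal does not constitute a proof of the theorem.
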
 

The results and methods described in \S3 now come together to prove the following important theorem.

\begin{thm}\label{bigtheorem} (\cite[Thm. 3.7]{PS13b}) Assume that $p\geq 2h-2$ is odd and that the LCF holds for the semisimple, simply
connected algebraic group
$G$. Let $\Lambda$ be a finite ideal of $p$-regular dominant weights, and put $A:=A_\Lambda$.  The graded algebra $\wgr A$ is standard Q-Koszul with poset $\Lambda$. In addition, the standard modules for $\wgr A$
are the modules $\wgr\Delta(\lambda)$, $\lambda\in\Lambda$. Also, the standard modules for $(\wgr A)_0$ are the
modules $\rDelta(\lambda)$, $\lambda\in\Lambda$. \end{thm}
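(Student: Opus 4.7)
The plan is to combine four results from the preceding sections: the integral quasi-hereditary result (Theorem \ref{mainquantumresult}), the $\fa$-linearity of $\wgr\Delta(\lambda)$ (Theorem \ref{theorem6.3}), the Ext-comparison isomorphisms (Theorem \ref{comparisonofcohomology}), and the explicit complexes $\Xi_\bullet\twoheadrightarrow\wgr\Delta(\lambda)$ described at the end of \S3.

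First I would derive the quasi-hereditary structure of $\wgr A$ with standards $\wgr\Delta(\lambda)$ by applying Theorem \ref{mainquantumresult} to the natural $\sO$-form $\wA_\Gamma$ of $A_\Gamma$ and base-changing to the residue field $k$ (forced grading commutes with base change). It then follows formally that $(\wgr A)_0$ is quasi-hereditary over the same poset $\Lambda$, its standard modules being the grade-$0$ components of the $\wgr\Delta(\lambda)$. By Theorem \ref{theorem6.3}(a), the grading on $\wgr\Delta(\lambda)$ refines the $\fa$-radical filtration on $\Delta(\lambda)$, so its grade-$0$ component is $\Delta(\lambda)/\rad_\fa\Delta(\lambda)\cong \rDelta(\lambda)$, as claimed.

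With these identifications, unwinding Definition \ref{standardQKoszul} the standard Q-Koszul conditions amount to
\begin{equation*}
\begin{cases}\grExt^n_{\wgr A}(\wgr\Delta(\lambda),\rnabla(\mu)\langle r\rangle)\neq 0\implies n=r,\\ \grExt^n_{\wgr A}(\rDelta(\mu),\wgr^\diamond\nabla(\lambda)\langle r\rangle)\neq 0\implies n=r,\end{cases}
\end{equation*}
for $\lambda,\mu\in\Lambda$ and $n,r\in\Z$. By Theorem \ref{comparisonofcohomology} the sum over $r$ of the left-hand side of the first line equals $\Ext^n_G(\Delta(\lambda),\rnabla(\mu))$ (and analogously for the second), so the issue is purely one of detecting the grading concentration $n=r$. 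To do this I would use the $\Xi_\bullet$-complexes: by Theorem \ref{theorem6.3}(b) and the Koszulity of $\fa$ (valid under LCF), the resolution may be chosen so that each $\Xi_i$ is generated in grade $i$ as a graded $\wgr A$-module, is $\fa$-projective, and admits a $\rDelta$-filtration. Since $\rnabla(\mu)$ lives in grade $0$, $\grHom_{\wgr A}(\Xi_i,\rnabla(\mu)\langle r\rangle)=0$ whenever $r\neq i$, and the complex therefore forces the desired parity $n=r$ on the cohomology, giving condition~(a).

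The main obstacle is justifying that $\Xi_\bullet$ actually computes $\grExt^\bullet_{\wgr A}(\wgr\Delta(\lambda),\rnabla(\mu)\langle r\rangle)$: one must show $\grExt^{>0}_{\wgr A}(\Xi_i,\rnabla(\mu))=0$ even though $\Xi_i$ is only $\fa$-projective, not $\wgr A$-projective. I would handle this by using Theorem \ref{comparisonofcohomology} to pass to the $G$-level, then exploiting the $\rDelta$-filtration on $\Xi_i$ to reduce to vanishing of $\Ext^{>0}_G(\rDelta(\nu),\rnabla(\mu))$ for $\nu$ in a suitable enlargement of $\Lambda$, and finally invoking Theorem \ref{goodfiltrationtheorem}(a) together with the spectral-sequence collapse of Remark~(b) following it to deduce that these vanish on weight grounds. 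Once this acyclicity is secured, condition~(b) follows by the dual argument using a linear graded injective coresolution of $\wgr^\diamond\nabla(\lambda)$ whose terms are $\fa$-injective with $\rnabla$-filtrations. Together these give the standard Q-Koszul property of $\wgr A$.
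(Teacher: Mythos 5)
Your overall skeleton follows the paper's sketch closely: establish the quasi-hereditary structure of $\wgr A$ via base change from Theorem~\ref{mainquantumresult}, identify the standards of $(\wgr A)_0$ with $\rDelta(\lambda)$, then use $\fa$-linearity (Theorem~\ref{theorem6.3}) and the $\Xi_\bullet$-complexes to force the grade concentration $n=r$. You also correctly identify the crux: showing $\Xi_\bullet$ computes $\grExt^\bullet_{\wgr A}(\wgr\Delta(\lambda),\rnabla(\mu)\langle r\rangle)$ requires the acyclicity $\Ext^{>0}_{\wgr A}(\Xi_i,\rnabla(\mu))=0$.

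That is exactly where your argument breaks. You propose to deduce this acyclicity by filtering $\Xi_i$ by $\rDelta$-sections and then claiming the vanishing of $\Ext^{>0}_G(\rDelta(\nu),\rnabla(\mu))$. But these Ext groups do \emph{not} vanish in general --- they are precisely the interesting objects of \S3, for which \cite[\S7]{PS13b} gives explicit nonzero Kazhdan--Lusztig multiplicity formulas. Neither Theorem~\ref{goodfiltrationtheorem}(a) nor the Hochschild--Serre collapse in the remark following it asserts vanishing; they give a $\nabla$-filtration and a computation of $\Ext^n_G(\rDelta(\lambda),\rnabla(\mu))$ as $G$-invariants, which is generically nonzero. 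A $\rDelta$-filtration on $\Xi_i$ is therefore insufficient for the required acyclicity. In fact, the acyclicity is a separate theorem in \cite{PS13b} (stated in this paper in the excised material as Proposition~\ref{acyclic}(a)) and rests on the particular structure of the $\Xi_i$ as graded modules filtered by shifts of the special modules $\wgr P^\sharp(\gamma)$, not merely on their having a $\rDelta$-filtration. There is also a circularity concern: the statement being proved is \cite[Thm.~3.7]{PS13b}, whereas Theorem~\ref{goodfiltrationtheorem} is \cite[\S\S5,6]{PS13b}, proved later and using this very theorem; invoking it here reverses the logical order.

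A secondary, more minor point: your justification that $(\wgr\Delta(\lambda))_0\cong\rDelta(\lambda)$ via ``the grading refines the $\fa$-radical filtration'' is not what Theorem~\ref{theorem6.3}(a) says. The identification comes from the integral forced-grading construction: $(\gr\wDelta(\lambda))_0=\wDelta(\lambda)/(\wDelta(\lambda)\cap\rad\wDelta_K(\lambda))$ is a highest-weight-generated lattice whose $K$-form is the irreducible quantum module $L_K(\lambda)$, so its reduction mod $p$ is $\rDelta(\lambda)$ by definition of the latter. The conclusion is correct, but the reasoning as stated is off.
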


The costandard modules for $\wgr A$ arise as certain linearly dual modules of right standard modules. The costandard modules
for  $(\wgr A)_0$ are just the $\rnabla(\lambda)$, $\lambda\in\Lambda$.

To prove Theorem \ref{bigtheorem}, it is necessary to prove (\ref{conditions}). As discussed earlier, the algebra
$\wgr A$ is known to be quasi-hereditary with weight poset $\Lambda$ and standard objects $\wgr\Delta(\lambda)$,
$\lambda\in\Lambda$. By \cite{PS5}, each section of $\wgr \Delta(\lambda)$ has a $\rDelta$-filtration. In addition,
Theorem \ref{theorem6.3} implies that $\wgr\Delta(\lambda)$ is linear as an $\fa$-module.  The required vanishing
in (\ref{conditions}) can then be obtained by using properties of the complex $\Xi\twoheadrightarrow\wgr \Delta(\lambda)$
briefly described in the previous section. A similar argument works for the costandard modules.

In \cite{PS14} the authors made several conjectures. A prime $p$ is called KL-good (for a given root system $X$) provided the Kazhdan-Lusztig functors $F_{\ell}$ associated to $X$ (see \cite{T} and \cite{KL} for the definition; a few details
are given below above Corollary \ref{generalSVV})  are category equivalences for
\begin{equation}\label{L}\ell = \ell(p):=\begin{cases} 4\quad p=2;\\ p\quad p\not= 2.\end{cases}\end{equation}
 So, in all cases, if $\zeta$ is a primitive $\ell(p)$th root of unity, $\zeta^2$ is a primitive $p$th root of unity.
 As discussed further in \S6,
 a list of known KL-good primes is given in \cite{T} for each indecomposable type. (More precisely, values of $\ell$ are described for which the Kazhdan-Lusztig functor $F_\ell$ is known to be an equivalence.)  If the root system $X$ is not indecomposable, call a prime KL-good  for $X$ provided it is KL-good for each
component.

The following conjecture is Conjecture I in \cite{PS14}. Evidence for  this conjecture is given, for very large primes,
by Theorem \ref{bigtheorem} above and, for $p=2$, in the theorem below the conjecture.

\begin{conj}\label{conjI} Let $G$ be a semisimple, simply connected algebraic group defined and split over ${\mathbb F}_p$ for a KL-good prime $p$. Let $\Gamma$ be a finite ideal of dominant weights and form the quasi-hereditary algebra  $A=A_\Gamma$.
Then $\wgr A$ is standard Q-Koszul. \end{conj}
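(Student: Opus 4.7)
The plan is to lift the proof strategy of Theorem \ref{bigtheorem} from the large-$p$, LCF-available regime to the KL-good regime by replacing each use of the classical LCF with an input drawn from the Kazhdan--Lusztig functor equivalence $F_{\ell(p)}$. Concretely, I would verify the two vanishing conditions of Definition \ref{standardQKoszul} for $\wgr A$ with respect to the candidate standard objects $\wgr\Delta(\lambda)$ and candidate costandard objects $\wgr^\diamond\nabla(\lambda)$, $\lambda\in\Gamma$, by resurrecting the complex $\Xi_\bullet\twoheadrightarrow\wgr\Delta(\lambda)$ described in \S3, but now built from ingredients that survive at every KL-good prime and at every (possibly singular) dominant weight in $\Gamma$.

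The first stage is to set everything up at the quantum integral level. Let $\zeta$ be a primitive $\ell(p)$-th root of unity so that $\zeta^2$ is a primitive $p$-th root of unity, let $\sO$ be the appropriate DVR with residue field $k$ and fraction field $K$, and let $\wA=\wU_{\zeta,\Gamma}$ be the natural integral form of $U_{\zeta,\Gamma}$. At the generic fiber, the Kazhdan--Lusztig equivalence (together with the singular-weight extensions indicated for \S6) puts $U_{\zeta,\Gamma}$ and the relevant blocks of $u_\zeta$ in a setting where Koszulity, linearity of $\Delta_\zeta(\lambda)$ over $u'_\zeta$, and $\rDelta_\zeta$-filtrations of $\Delta_\zeta(\lambda)$ compatible with the quantum radical filtration are all available. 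Applying the forced-grading construction $\gr\wA:=\bigoplus_i(\wA\cap\rad^i\wA_K)/(\wA\cap\rad^{i+1}\wA_K)$, the expected output is that $\gr\wA$ is a split quasi-hereditary $\sO$-order with standard lattices $\gr\wDelta_\zeta(\lambda)$, each linear over the integral small-quantum-group lattice $\wfa$ and whose sections carry integral $\rDelta_\zeta$-filtrations. This extension of Theorem \ref{mainquantumresult} is to be obtained by a KL-good-prime version of the Ext$^1$-vanishing criterion of \cite[Thm.~4.4]{PS5}.

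The second stage is base change to $k$. Granting that forced grading commutes with $\otimes_\sO k$ on the relevant lattices, one gets $\wgr A\cong\gr\wA\otimes_\sO k$ compatibly with the standard and costandard objects, together with the identification $\wgr\Delta(\lambda)\cong \gr\wDelta_\zeta(\lambda)\otimes_\sO k$; in particular, the linearity over $\fa=\wgr\fa$ and the $\rDelta$-filtrations on the sections of $\wgr\Delta(\lambda)$ (analogues of Theorems \ref{cor5.2} and \ref{theorem6.3}) descend from $K$ to $k$. From here one runs the argument of Theorem \ref{bigtheorem}: build the complex $\Xi_\bullet\twoheadrightarrow\wgr\Delta(\lambda)$ whose terms $\Xi_i$ are $\wgr A_{\Gamma_i}$-modules, projective over $\fa$, with each syzygy carrying a $\rDelta$-filtration, and then read off the vanishing in (\ref{conditions}) by applying $\grHom_A(-,\nabla_0(\mu)\langle r\rangle)$ and dualising to the costandard side.

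The principal obstacle is exactly the passage from $K$ to $k$: the Ext$^1$-vanishing criterion of \cite[Thm.~4.4]{PS5} requires, in its current form, that the graded lattice $\wX$ in its second argument decompose finely enough after base change to $K$, a property previously guaranteed by the LCF via indecomposability of certain quantum injectives. Without the LCF, one must replace that indecomposability input by a weaker statement, valid at any KL-good prime and for singular as well as regular weights, that still controls the grade-$0$ part of $\wX\otimes_\sO K$. My expectation is that the affine Lie algebra/parabolic--singular duality input discussed in \S6, together with the fact that the Kazhdan--Lusztig equivalence is known to hold at singular highest weights, provides precisely this substitute --- indeed the $p=2$ evidence cited after the conjecture suggests such a mechanism is already implicit there. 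Once this is in hand, the remaining work is to check that the forced-grading construction commutes with base change through $\sO$ finely enough to transport standard Q-Koszulity down to $\wgr A$; I expect no new obstacles beyond those already handled in \cite{PS5.5}.
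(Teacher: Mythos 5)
This statement is Conjecture~\ref{conjI}; the paper does not prove it. It is labeled a conjecture precisely because the strategy you outline cannot yet be carried out, and the paper offers only partial evidence: the large-$p$ regular-weight case via Theorem~\ref{bigtheorem}, and a single hand-verified case for $p=2$ (the principal block of $S(5,5)$). So there is no ``paper's own proof'' for comparison; your proposal is an attempt at an open problem.

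Beyond that, the gaps you acknowledge are genuine and at least one of them is fatal in the form you propose. Your plan routes through (i) an extension of Theorem~\ref{mainquantumresult} to all KL-good primes and singular weights, (ii) a $\rDelta$-filtration on (sections of) $\wgr\Delta(\lambda)$, and (iii) linearity of $\wgr\Delta(\lambda)$ over $\fa$, all transported from the quantum side via the Kazhdan--Lusztig equivalence. But step (ii), in the form ``standard modules admit $\rDelta$-filtrations,'' is not merely unproved at small primes --- it is known to be false. The footnote following Theorem~\ref{cor5.2} records Will Turner's example (\cite[Prop.~5.1]{PS14}) of a standard module in characteristic $2$ with no $\rDelta$-filtration. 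Since the $p=2$ Schur-algebra example is exactly the evidence the paper cites in favor of the conjecture, the conjecture must hold there by some mechanism that does not pass through $\rDelta$-filtrations of $\Delta(\lambda)$ in the ungraded sense. Any proof along the lines of Theorem~\ref{bigtheorem} would therefore need to establish a graded replacement (e.g.\ $\rDelta$-filtrations of the grade-by-grade sections of $\wgr\Delta(\lambda)$ only, which is a strictly weaker and possibly salvageable statement), rather than borrowing Theorem~\ref{cor5.2} wholesale. Your ``principal obstacle'' (controlling the grade-$0$ part of the lattices $\wX$ without the indecomposability supplied by the LCF) is likewise real and open; appealing to ``my expectation'' that the singular KL theory of \S6 supplies a substitute is not a proof, and the paper itself stops short of claiming this.
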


\begin{thm}\cite[Thm. 6.2]{PS14}  Let $S(5,5)$ be the Schur algebra for $GL_5(\overline{\mathbb F}_2)$ in
characteristic 2. Let $A$ be the principal block of $S(5,5)$. Then $\wgr A$ is standard Q-Koszul. \end{thm}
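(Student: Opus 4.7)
The plan is to establish the result by combining the forced grading machinery developed in \S2--\S4 with the specific structural features of the (low-rank, small characteristic) principal block of $S(5,5)$ at $p=2$, leveraging that $p=2$ is KL-good for type $A$ with $\ell(2)=4$. Write $A$ for the principal block of the Schur algebra and $\Lambda \subset X_+$ for its weight poset (the dominant weights in the principal block, a small explicit set of partitions of $5$). The goal is to verify both vanishing conditions (a) and (b) of Definition \ref{standardQKoszul} for $\wgr A$.

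First I would set up an integral model. Choose a DVR $\sO = \Z[\zeta]_{(\zeta-1)}$ with $\zeta$ a primitive $\ell = 4$-th root of unity, so that the residue field is of characteristic $2$ and the fraction field contains $\zeta$. Let $\wA$ be the integral quasi-hereditary $\sO$-form of the corresponding truncated $q$-Schur algebra at $\zeta$ on the weights in $\Lambda$, constructed so that $\wA \otimes_\sO k \cong A$ after identification. Form the forced grading
$$
\gr\wA = \bigoplus_{i\geq 0} \frac{\wA \cap \rad^i \wA_K}{\wA \cap \rad^{i+1}\wA_K},
$$
and set $\wgr A := k \otimes_\sO \gr\wA$. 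By construction $\gr\wA$ is a graded $\sO$-order whose generic fiber $\gr\wA_K$ is the associated graded of the quantum Schur block at $\zeta$.

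Second I would invoke the Koszulity of the quantum side. Because $p=2$ is KL-good for type $A$ (with $\ell=4$) and the block lies in the $q$-Schur setting, the Kazhdan-Lusztig equivalence of Kazhdan-Lusztig-Tanisaki applies and identifies the generic fiber with an appropriate parabolic category at a negative level for an affine Lie algebra of type $A$. The Shan-Varagnolo-Vasserot theorem \cite{SVV14}, as discussed in \S6, then implies that the generic fiber $\gr\wA_K$ is standard Koszul with standard objects the $\gr\wDelta_K(\lambda)$. In particular the graded Ext vanishing conditions defining standard Koszulity hold at the generic fiber, and they hold grade by grade on the integral level by the usual comparison of graded Ext for $\sO$-free graded lattices (the integral Ext groups are $\sO$-free in the relevant degrees by standard universal coefficient arguments, since the generic-fiber Ext is concentrated in a single degree).

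Third I would descend to the special fiber. Base changing the graded Ext vanishing from $\sO$ to $k$ gives the desired vanishing for $\wgr A$, provided one knows that $\wgr\Delta(\lambda)$ is the mod-$p$ reduction of $\gr\wDelta(\lambda)$, that $\wgr A$ is quasi-hereditary with weight poset $\Lambda$, and that the grade-$0$ part $(\wgr A)_0$ is quasi-hereditary with standard modules whose integral lifts match those at the generic fiber. The quasi-heredity of $\wgr A$ and the identification of its standard modules with $\wgr\Delta(\lambda)$ will be verified along the lines of Theorem \ref{mainquantumresult} (which was proved for $e=p>2h-2$; here one replaces the large-$p$ hypothesis by direct computation, exploiting that $|\Lambda|$ is very small). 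Morita invariance of the Q-Koszul property, proved in \S5, lets me pass freely to the basic algebra when making these computations.

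The hard part, and the main obstacle, is that the hypotheses of Theorem \ref{bigtheorem} and Theorem \ref{mainquantumresult} require $p \geq 2h-2 = 8$, which fails here. The verification of quasi-heredity of $\wgr A$ in characteristic $2$, and especially the identification of the grade-$0$ standard modules with the correct $\rDelta(\lambda)$-type objects together with the requisite $\fa$-linearity of $\wgr\Delta(\lambda)$, cannot be taken for granted from the general theory. I would handle this by directly analysing the principal block, using that the number of simples in $\Lambda$ is small so the relevant filtrations of $\Delta(\lambda)$ and the radical structure of $\wA_K$ can be computed explicitly (via known $q$-decomposition numbers and crystal basis data for type $A$). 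Once the grade-$0$ standard/costandard modules are identified and $\fa$-linearity of the standards is established, the vanishing statements of Definition \ref{standardQKoszul} follow from the generic-fiber standard Koszulity by the integral comparison described above, completing the proof.
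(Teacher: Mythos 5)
Your identification of the central obstacle is correct: the general results of \S\S2--4 (Theorems \ref{mainquantumresult} and \ref{bigtheorem}, the $\Xi$-complex machinery, the $\fa$-linearity theorem) all require $p\geq 2h-2=8$ and typically also the LCF, none of which is available at $p=2$. You are also right that the theorem must ultimately be checked by hand on the small block. But the ``integral comparison'' you propose for pushing the Shan--Varagnolo--Vasserot standard Koszulity of $\gr\wA_K$ down to the standard Q-Koszulity of $\wgr A=\gr\wA\otimes_\sO k$ does not work, and this is precisely the part of the argument that cannot be waved through.

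Take a graded resolution $P_\bullet\twoheadrightarrow\gr\wDelta(\lambda)$ by projective $\gr\wA$-lattices and set $C^\bullet=\grHom_{\gr\wA}(P_\bullet,\wrnabla(\mu)\langle r\rangle)$, a complex of $\sO$-lattices. What \cite{SVV14} gives you (via Corollary \ref{generalSVV}) is $H^n(C^\bullet\otimes_\sO K)=0$ for $n\neq r$. This says only that $H^n(C^\bullet)$ is $\sO$-torsion off degree $r$; it does not say it vanishes. The universal coefficient sequence
$$0\to H^n(C^\bullet)\otimes_\sO k\to H^n(C^\bullet\otimes_\sO k)\to \Tor_1^\sO\bigl(H^{n+1}(C^\bullet),k\bigr)\to 0$$
shows that $\grExt^n_{\wgr A}(\wgr\Delta(\lambda),\rnabla(\mu)\langle r\rangle)=H^n(C^\bullet\otimes k)$ can be nonzero even when $H^n(C^\bullet\otimes K)=0$, through both terms. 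Your assertion that ``the integral Ext groups are $\sO$-free in the relevant degrees by standard universal coefficient arguments, since the generic-fiber Ext is concentrated in a single degree'' is exactly backwards: concentration at $K$ forces the integral Ext to be torsion away from that degree, and it is that torsion which obstructs the base change. Controlling such torsion is what the $\fa$-linearity and $\Xi$-complex arguments of \cite{PS13b} accomplish in the large-$p$ regime; absent them, the vanishing in Definition \ref{standardQKoszul} must itself be checked by direct calculation, not inherited from the quantum side. For what it is worth, the present paper does not prove this theorem at all---it cites \cite[Thm.\ 6.2]{PS14}, where, for precisely the reason above, the verification is a hands-on computation in the principal block rather than a descent from $\gr\wA_K$.
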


It is also stated in \cite{PS14}, without additional details, that the same result holds for the full algebra $S(5,5)$.

\vskip.4in
\begin{center}{\bf\Large Part II: New results}\end{center}

\vskip.2in
\section{\bf Morita equivalence and positive gradings}

Finite dimensional algebras $A$ behave quite well with respect to the interaction of Morita equivalences and positive gradings. For instance, \cite[Lem.F.3]{AJS}, stated for  Artinian rings, shows that a finite dimensional algebra $A$ has a Koszul grading if it is Morita equivalent to an algebra $B$ with a Koszul grading.\footnote{Note that, if $A$ is a finite dimensional algebra over a field $k$, and $B$ is any ring Morita equivalent to $A$, then $B$ is also a finite dimensional algebra over $k$.} The algebra  $A$ is not assumed a priori to have any grading at all. It is the aim of this section to show that many more properties with a positive grading underpinning carry over under Morita equivalence. An especially fundamental one is the existence itself of a comparable positive grading. The proposition below makes this property precise and provides a framework for handling many others.

Recall that $\Amod$  denotes the category of finite dimensional left $A$-modules, and, if $A$ is graded, $\Agrmod$ denotes the category of finite dimensional graded $A$-modules. If both $A$ and $B$ are finite dimensional graded algebras over the same field, and $F: \Bgrmod \to \Agrmod$ is an (additive) functor, we say (following \cite[app. E]{AJS}) that $F$  is {\it graded} if it commutes (up to natural isomorphism) with the grading shift functors, i.~e., $F(M\langle r\rangle)
\cong F(M)\langle r\rangle$, for $M\in\Bgrmod$, $r\in\mathbb Z$.
 We'll also say a graded functor $F$ is a {\it graded version} of an (additive) functor $E:\Bmod\to \Amod$ if there is a functor composition diagram
\begin{equation}\label{diagram}\begin{CD}  \Bgrmod @>{F}>> \Agrmod \\
@VvVV @VvVV\\
  \Bmod @>{E}>>\Amod \end{CD}\end{equation}
  commutative up to a natural isomorphism, in which the vertical maps are forgetful functors (both denoted $v$, by abuse of notation). It is also useful to have the notion of a {\it grade-preserving} functor. This is a graded functor $F$ which takes any graded object $M$ whose nonzero grades $M_n$ all satisfy any given inequality $a\leq n\leq b$ to an object $F(M)$ with the same property. In the case of an exact graded
  functor $F$, this just reduces to the condition that $F(M)$ is pure of grade $n$ whenever $M$ is pure of grade $n$.

  In any case, we only use the term grade-preserving for graded functors. If a grade-preserving functor $F$ is a
  graded version of a functor $E$, as above, we will sometimes simply say that $F$ is a grade-preserving version of
  $E$.

  \begin{prop}\label{Moritagraded} Suppose $A$ is a finite dimensional algebra Morita equivalent to an algebra $B$ which has a given positive grading. Then there is positive grading on $A$ and a grade-preserving functor $F: \Bgrmod \to \Agrmod$ which is a graded version of a functor $E: \Bmod \to \Amod$ with both $F$ and $E$ equivalences of categories. Moreover, $F,E$ and inverse equivalences may be chosen so that the inverse of $F$ is a grade-preserving version of the inverse of $E$.
  \end{prop}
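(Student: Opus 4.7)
The plan is to realize $A$ as $\End_B(Q)^{\op}$ for a graded progenerator $Q$ of $\Bmod$ chosen so that the induced grading on $A$ is positive, and then to exhibit the standard Morita functors $\Hom_B(Q,-)$ and $Q\otimes_A-$ as the grade-preserving graded versions of the ungraded Morita equivalence and its inverse.

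First I will fix an ungraded progenerator $Q_0$ of $\Bmod$ with $A\cong\End_B(Q_0)^{\op}$. Because $B_{>0}$ is a nilpotent ideal, a complete set of primitive orthogonal idempotents of $B$ may be chosen inside $B_0$, so $Q_0\cong\bigoplus_i(Be_i)^{n_i}$, and each PIM $Be_i$ carries its canonical positive grading with head in grade $0$. Equip $Q_0$ with the direct-sum grading; the resulting positively graded $B$-module, which I call $Q$, is then generated in grade $0$. Set $A_r:=\grHom_B(Q,Q\langle r\rangle)$; any graded endomorphism of negative degree must send the grade-$0$ generators of $Q$ into grades where $Q$ vanishes, and hence vanishes on all of $Q=BQ_0$, so $A=\bigoplus_{r\ge 0}A_r$ is positively graded. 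Transport this to $A$ through the isomorphism, and define the graded functors $F(M):=\Hom_B(Q,M)$ and $G(N):=Q\otimes_A N$, each with its natural $\mathbb Z$-grading. By construction $F$ is a graded version of $E:=\Hom_B(Q_0,-)$ and $G$ is a graded version of $E^{-1}:=Q_0\otimes_A-$; the ungraded Morita unit and counit isomorphisms respect the gradings, so $F$ and $G$ are mutually quasi-inverse graded equivalences.

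To check that $F$ is grade-preserving I first treat a pure-grade-$0$ module $N$: a graded $B$-map $Q\to N$ of degree $r\ne 0$ must send the grade-$0$ generators of $Q$ into $N_r=0$ and so vanishes on $Q=BQ_0$, forcing $F(N)$ to be pure of grade $0$; shifting by $\langle n\rangle$ handles pure grade $n$. For $M$ with grades in $[a,b]$ the graded submodule $M_{\ge a+1}:=\bigoplus_{n\ge a+1}M_n$ fits into a short exact sequence $0\to M_{\ge a+1}\to M\to M_a\to 0$, and an induction on $b-a$ using exactness of $F$ shows that $F(M)$ has grades in $[a,b]$. Applying the same SES argument to $F(M)$ and using faithfulness of the equivalence shows, conversely, that $F$ \emph{reflects} grade ranges: if $F(M)$ has grades in $[a,b]$ then so does $M$. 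Consequently $G$, being quasi-inverse to $F$, inherits both the grade-preservation and the grade-reflection properties, and the final clause of the proposition comparing $F^{-1}$ with $E^{-1}$ follows at once.

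The step I expect to be the main obstacle is the grade-preservation of $G$ itself. A direct verification reduces to the identity $Q\cdot A_{>0}=Q_{>0}$---equivalently, that $Q$ is generated in grade $0$ as a \emph{right} $A$-module---which is the mirror image of the defining property of $Q$ on the $B$-side, but the right $A$-action is only implicitly given through endomorphisms of $Q$ and does not offer as clean a handle as the left $B$-action. Deducing it instead from the grade-reflection property of $F$, as above, bypasses this difficulty and keeps the argument uniform.
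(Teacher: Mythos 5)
Your argument is correct, and it diverges from the paper's proof in a way worth noting. The paper passes to $M_n(B)$ and then to $A=eBe$ for a full idempotent $e$ lifted into $B_0$ (using nilpotence of $B_{\geq 1}$); the inverse functor is taken to be $Be\otimes_{eBe}(-)$ with an explicitly constructed grading, and its grade-preservation is checked by a direct idempotent computation, namely $B_{\geq 1}Be \subseteq B(eB_{\geq 1}e)$ together with $eB_{\geq 1}eL=0$ for $L$ pure of grade $0$. You instead work with an arbitrary graded progenerator $Q$ with $A\cong\End_B(Q)^{\op}$, using the same idempotent-lifting observation only to arrange that $Q$ be generated in grade $0$, and you get positivity of the transported grading from that. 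The real novelty is your treatment of the inverse: rather than verify directly that $Q\otimes_A(-)$ is grade-preserving (which, as you note, amounts to the slightly awkward right-module identity $Q\cdot A_{>0}=Q_{>0}$), you first establish that $F=\Hom_B(Q,-)$ is grade-preserving, then upgrade this to a grade-\emph{reflection} statement using exactness and faithfulness of $F$ (the two short exact sequences $0\to M_{\geq c_0+1}\to M\to M/M_{\geq c_0+1}\to 0$ and $0\to M_{\geq c_1}\to M\to M/M_{\geq c_1}\to 0$ show $F$ sees both endpoints of the grade range), and finally deduce grade-preservation of the quasi-inverse formally. That indirection is a clean trick that avoids any explicit handling of the right $A$-structure on $Q$; the paper's direct $eBe$ computation, by contrast, is elementary and self-contained but more ad hoc. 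The two approaches trade off the same content, and yours buys a more conceptual argument at the cost of having to observe that the Morita unit and counit are graded maps so that $F$ and $Q\otimes_A(-)$ are genuinely quasi-inverse in the graded categories (which you assert, and which does hold, though it merits a sentence of verification since it is the lynchpin of the reflection argument).
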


  \begin{proof} First, recall the well-known result from Morita theory that $A\cong eM_n(B)e$ for some positive integer $n$ and ``full" idempotent $e\in M_n(B)$. (That is, $M_n(B)e$ is a progenerator for $M_n(B)$.) Here $n$ is some positive integer and $M_n(B)$ is the algebra of $n\times n$ matrices over $B$. We will also write $M_{n,m}(X)$ for the set of all $n\times m$ matrices with entries from a set  $X$,  for any positive integers $n,m$. Generally, $X$ will have some kind of left or right $B$-module structure, leading to a corresponding structure for $M_{n,m}(X)$ over $M_n(B)$ or $M_m(B)$, respectively.

  Next observe that $M_n(B)$ transparently inherits $B$'s given positive grading. There is a standard Morita equivalence from $B$-mod to $M_n(B)$-mod,  given by tensoring over $B$ with
  $M_{n,1}(B)$. It sends a left $B$-module $N$ to the left $M_n(B)$-module $M_{n,1}(N)$. If $N$ is graded, then its graded structure is obviously inherited, so that the same recipe defines a grade-preserving graded analog of the original functor. Both graded and ungraded versions are equivalences of categories. (As is well-known in the ungraded case, an inverse functor is given by tensoring over $M_n(B)$ with $M_{1,n}(B)$. Identifying $B$ with $eM_n(B)e$, where $e$ is the
  matrix unit $e_{1,1}$, this inverse functor is naturally equivalent to mutlipication by $e$.  The same recipe gives an inverse functor at the graded level, a grade-preserving version of the same
  multiplication functor, essentially identical to it.)

  The proposition will now follow by taking compositions of functors, if we can prove it for the case where $A=eBe$ for an idempotent $e$ with $Be$ a progenerator for the category of $B$-modules. Henceforth, we consider that case. Another reduction we can make is to replace $A$ with any isomorphic algebra, and we proceed to construct a very useful one. Note that $B_0$ is isomorphic to the factor algebra $B/B_{\geq 1}$ by projection, so that there is an idempotent $e_0\in B_0$ which has the same
  projection as $e$. Since the ideal $B_{\geq 1}$ is clearly nilpotent, the projective modules $Be_0$ has the same head as
  $Be$, and so is isomorphic to it. Therefore, the endomorphism algebra $\End_B(Be_0)\cong (e_0Be_0)^{\op}$ is isomorphic to the endomorphism algebra $\End_B(Be)\cong (eBe)^{\op}=A^{\op}$. So we may assume $A=e_0Be_0$, or, equivalently, we may assume $e=e_0$.

 In particular, $A$ inherits $B$'s given positive grading. Also, if $N\in\Bgrmod$, then $eN$ inherits a graded $A$-module structure. The resulting functor $F:\Bgrmod\to \Agrmod$, given by $N\mapsto eN$, is obviously a grade-preserving version of its ungraded analog $E$. The  ungraded functor 
 $E$ is well-known to be a Morita equivalence, with inverse given by $E^\dagger:=Be\otimes_{eBe}(-)$.

 We next construct a graded version $F^\dagger$ of this inverse. Given a graded $eBe$-module $Y$, let $Z=Be\otimes_{eBe}Y$. Let $k$ denote the ground field, and regard $Z$ as the quotient of $V:=Be\otimes_kY$; we will henceforth omit the $k$ subscript. The space $V$ becomes a graded $B$-module in an obvious way, if, for any integer $n$, we let $V_n$ denote the sum of all terms $(Be)_i\otimes Y_j$ with $i,j$ integers such that $i+j=n$. Let $R_n$ denote the $k$-span in $V_n$ of all expressions $st\otimes y-s\otimes ty$ with $s\in (Be)_{i'}, t\in (eBe)_m$ and $y\in Y_{j'}$ for some integers $i',m,j'$ with $i'+m+j'=n$. Then $R=\oplus_n R_n$ is a graded $B$-submodule of $V$. Also, $R$ is precisely the $k$-span of all elements $st\otimes y-s\otimes ty$ with $s\in Be, t\in eBe$ and $y\in Y$. It follows that the natural map of $B$-modules $Y\to Z$ has kernel $R$. This gives $Z=Be\otimes_{eBe}Y$ the structure of a graded $B$-module with $Z_n=Y_n/R_n$ for any integer $n$. Finally, if $f: Y\to Y'$ is a map of graded $eBe$-modules, then $1_{Be}\otimes f$ gives a graded map $V\to V'$, where $V':=Be\otimes Y'$. The image of $R$ is contained, grade by grade, in $R'$, defined by analogy with $R$. Thus, there is an induced graded map $V/R\to V'/R'$. As before, we identify $Z'=V'/R'$ with $Be\otimes_{eBe}Y'$. The map $V/R\to V'/R'$ induced by 
 $1_{Be}\otimes f$ now agrees, with these identifications, with the map $1_{Be}\otimes_{eBe} f$. This gives the
 desired graded version
 $F^\dagger$, now defined on both objects and maps, of the inverse functor $E^\dagger=Be\otimes_{eBe}(-)$.

 It remains to check that this graded version $F^\dagger$ is grade preserving. It is exact, since the ``fullness" of $e$ (in a sense analogous to that explained at the start of this proof) can be stated as the symmetric condition $B=BeB$. This implies that multiplication on the right gives an equivalence from mod-$B$ to mod-$eBe$, so that, in particular $Be$ is a projective right $eBe$-module. It suffices, now, to apply the  graded inverse functor to a graded irreducible $eBe$-module $L$ of pure grade 0, and determine that the result is again pure of grade 0. In view of the graded structure defined above on $Be\otimes_{eBe}Y$ in the case $Y=L$, it is enough to show that $B_{\geq 1}(Be\otimes_{eBe}L)=0$. However,  $B_{\geq 1}Be=(BeB)(B_{\geq 1})e \subseteq B(eB_{\geq 1}e)$, and, for many reasons, $eB_{\geq 1}eL=0$. (One can use grade considerations, or the very general fact that $e\rad(Be)\subseteq \rad(eBe)$.) Thus, the graded version $F^\dagger$ that we have constructed of the inverse functor $E^\dagger$ is also grade-preserving. This completes the proof of the proposition. \end{proof}

 It is also known from \cite{CPS1a} that, if a finite dimensional algebra $B$ is quasi-hereditary and has a positive grading, then
 irreducible, projective and standard modules can be chosen to have a graded $B$-module structure, with heads of grade 0. Left-right symmetry of the quasi-hereditary property implies costandard modules and injective modules also have gradings, with socles of grade 0.
 It is pointed out in \cite{SVV14} that such gradings, with the grade 0 properties given, are unique, and similar unique gradings are noted for indecomposable tilting modules, cf. \cite[Prop.2.7]{SVV14}.\footnote{One way to phrase the uniqueness condition in the case of an indecomposable tilting module is to say its irreducible section of highest weight can be found in grade 0.} As a consequence, if $B$ is quasi-hereditary and positively graded, and $A$ is Morita equivalent of $B$, then all these graded modules get carried by the functor $F$ above into corresponding graded modules (irreducible, standard, projective, constandard, injective, tilting) for $A$, with the characteristic grade 0 property carrying over when present.

  \begin{cor}\label{MoritaCarryOver}  Suppose $A$, $B$ are finite dimensional Morita equivalent algebras, and that $B$ has one or more of the properties below (some of which require a positive grading on $B$). Then $A$ has the corresponding property or properties:\footnote{The Mazorchuk property of a ``balanced" quasi-hereditary algebra $B$ is heavily used in \cite{SVV14}. It means that $B$ is a positively graded quasi-hereditary algebra. Additionally, it is assumed that all
  standard modules  $\Delta(\lambda)$ have linear tilting resolutions $\Delta(\lambda)\hookrightarrow T^\bullet$ and all costandard modules $\nabla(\lambda)$ have
  linear tilting coresolutions $T'_\bullet\twoheadrightarrow \nabla(\lambda)$. See Mazorchuk \cite[p. 3]{Mazor} for a
  precise definition. Note that the shift functors $\langle r\rangle$ in \cite{Mazor} are what we denote by $\langle -r\rangle$.
  (Our notation agrees with that in \cite{BGS}.
  Consequently, Proposition \ref{Moritagraded} shows  ``balanced" is also preserved by Morita equivalence.)}

  \medskip\medskip
       quasi-hereditary, positively graded, Koszul, standard Koszul, Q-Koszul, standard Q-Koszul.

     \medskip\medskip\noindent
      Moreover, after giving $A$ an appropriate positive grading, the categories $A$-grmod and $B$-grmod are equivalent
     by a graded functor.
 \end{cor}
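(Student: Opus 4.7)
The plan is to apply Proposition \ref{Moritagraded} once and then transfer each listed property through the resulting grade-preserving equivalence. Proposition \ref{Moritagraded} already supplies a positive grading on $A$ together with a grade-preserving equivalence $F\colon \Bgrmod \to \Agrmod$ whose quasi-inverse is also grade-preserving, and such that $F$ and its inverse are graded versions of mutually inverse ungraded Morita equivalences $E, E^{-1}\colon\Bmod\leftrightarrows\Amod$. This already handles the ``positively graded'' entry of the list.

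The key observation is that, because $F$ is an exact equivalence commuting (up to natural isomorphism) with every shift functor $\langle r\rangle$, it induces natural isomorphisms
\[
\grExt^n_A(F(M), F(N)\langle r\rangle) \;\cong\; \grExt^n_B(M, N\langle r\rangle)
\]
for all $M,N\in\Bgrmod$ and all $n\geq 0$, $r\in\mathbb Z$. In addition $F$ carries graded irreducible (resp.\ projective, injective) modules to graded irreducible (resp.\ projective, injective) modules, and, being grade-preserving, sends pure-of-grade-$0$ objects to pure-of-grade-$0$ objects. Via the ungraded equivalence $E$, any quasi-hereditary structure on $B$ with poset $\Lambda$ transfers to $A$ with the same poset, standard modules $E(\Delta_B(\lambda))$, costandard modules $E(\nabla_B(\lambda))$, etc.; this disposes of the ``quasi-hereditary'' entry. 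The uniqueness (recalled just before the corollary) of the graded structures on irreducible, standard, costandard, projective, injective, and tilting modules characterized by their canonical grade-$0$ head or socle then forces $F$ to match the canonically graded modules of $B$ one-for-one with those of $A$.

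With these two tools, the remaining properties follow directly from their definitions. Koszulity, $\grExt^n_B(L,L'\langle r\rangle)\neq 0 \Rightarrow n=r$, transfers at once. Standard Koszulity adds the linearity of the graded standard and costandard modules; since $F$ is exact and shift-commuting, it sends a linear projective resolution to a linear projective resolution, so linearity transfers. Q-Koszulity, the condition
\[
\grExt^i_B(\Delta_0(\lambda),\nabla^0(\mu)\langle j\rangle)\neq 0 \implies i=j,
\]
transfers because $F$ identifies the grade-$0$ standard and costandard modules of $B_0$ with those of $A_0$ (the latter being a quasi-hereditary algebra via $E$ restricted to the grade-$0$ pieces). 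Standard Q-Koszulity, which involves both pairings in \eqref{conditions}, is handled in the same way.

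The principal obstacle is essentially bookkeeping rather than mathematics: one has to verify that the ``canonical'' gradings on the various distinguished modules of $A$ really do correspond, under $F^{-1}$, to the canonical gradings of the analogous modules over $B$, so that the numerical vanishing conditions defining each property can be pulled back. Here the grade-preserving property of $F$ and $F^{-1}$ combined with the uniqueness of these canonical gradings, as noted in the paragraphs preceding the corollary, does all of the nontrivial work.
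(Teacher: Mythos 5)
Your proposal is correct and follows essentially the same route as the paper: apply Proposition \ref{Moritagraded}, observe that the grade-preserving equivalence $F$ restricts to a Morita equivalence between $B_0$-mod and $A_0$-mod (identified with the pure grade-$0$ graded modules), and then transfer the defining $\grExt$-vanishing conditions directly. One small wording point: it is $F$ (restricted to pure grade-$0$ objects), not the ungraded functor $E$, that furnishes the $B_0\to A_0$ Morita equivalence, but this is clearly what you intend.
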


 \begin{proof} Assume the hypothesis on $A$, $B$ and apply Proposition \ref{Moritagraded}. Most of the corollary is immediate from the proposition and remarks after it. We treat the slightly more involved Q-Koszul cases: First, just assuming $B$ is positively graded, note the category of $B_0$-modules identifies with the category
 of graded $B$-modules which are pure of grade 0. It follows that $A_0$ is Morita equivalent to $B_0$, with $F$ furnishing the required category equivalence. In particular, if $B_0$ is quasi-hereditary, then so is $A_0$, with standard and costandard modules, viewed as pure grade 0 modules for $B$ and $A$, respectively, corresponding under $F$. If also $B$ is Q-Koszul, these modules satisfy the defining ext properties (\ref{QKproperty}) in $B$-grmod, which carry over to precisely to the same properties in $A$-grmod. Thus, $A$ is also Q-Koszul.  A similar argument shows that the standard Q-Koszul property also carries over.

 The other cases are much easier. The rest of the proof is left to the reader.
 \end{proof}

\section{\bf Some complements to \cite{SVV14}}

We begin this section with a detailed exposition of a main theorem \cite[Thm. 3.12]{SVV14} of Shan-Varagnolo-Vasserot. In it these authors study, for affine Lie algebras $\bold g$,  certain categories
\begin{equation}\label{display}
^w\mathbb O^{\nu}_{\mu,-} \,\,\,\text{and}\,\,\, ^v\mathbb O^{\mu}_{\nu,+}\end{equation}
 associated to, and collectively determining, blocks at non-critical levels of associated integral weight parabolic categories $\mathbb O$. They show these categories  are equivalent to finite dimensional module categories of finite dimensional $\mathbb C$-algebras which are standard Koszul, even ``balanced" (see the ftn. 10 in \S5 above). We will explain below the notation, but should say right away that, with an appropriate relation between the parameters $w$ and $v$ (elements of the ambient
 affine Weyl group), the two categories are Koszul dual to each other. (This means, if the respective ungraded module categories are equivalent to $\Amod$ and $B$-mod respectively, then $A$ is Morita equivalent to $B^!=\Ext^\bullet_B(B_0,B_0)^{\text{\rm op}}$. It seems harmless, in view of \S5, to say the category $\Amod$ is Koszul or standard Koszul if the algebra $A$ has the corresponding property.) On both sides of (\ref{display}) the symbols $\mu$ and $\nu$ denote proper subsets, possibly empty, of fundamental roots. (Neither symbol $\mu$ or $\nu$ has precedence over the other, and our usage tends to be the reverse of that in \cite{SVV14}.) These are used to parameterize the ``parabolic" and ``singular" features of the ambient parabolic blocks, denoted $\mathbb O^{\nu}_{\mu,-}$ and $\mathbb O^{\mu}_{\nu,+}$, respectively. The subset symbols $\mu$ and $\nu$ used for a superscript parameterize the ``parabolic" nature of the block, and parameterize its ``singular" aspect when used as a subscript. The signs $-$ and $+$ stand for two cases for the level of the block, whether it is $-e-g$ or $+e-g$ where $e$ is a positive integer and $g$ denotes the dual Coxeter number (called $N$ in \cite{SVV14}). The critical level $-g$ is not allowed. The level itself is suppressed in the notation, since the equivalence class of the categories considered depends only on whether the level is below or above $-g$ (as given by the signs $-$ and $+$, respectively) together with other parameters that do not depend on the level. This is a general feature of the notation for blocks here, which is used in a generic way to describe any one of a family of blocks which are all equivalent as categories.

This is made more concrete in \cite{SVV14} by choosing weights $o_{\mu,-}$ and $o_{\nu,+}$ in the anti-dominant and dominant cones (with shifted origin), respectively, so that the irreducible modules in $\mathbb O^{\nu}_{\mu,-}$ and $\mathbb O^{\mu}_{\nu,+}$, respectively, all have  highest weights in the orbit of $o_{\mu,-}$ or $o_{\nu,+}$, respectively, under the dot action of the affine Weyl group. The notation is chosen so that the respective stabilizers of these two weights are $W_{\mu}$ and $W_{\nu}$, the (finite) Weyl groups associated to the respective root systems generated by $\mu$ and $\nu$. This captures the singularity
of the blocks under consideration.
The weights in the first orbit specific to $\mathbb O^{\nu}_{\mu,-}$ are those which are ``$\nu$-dominant", in the sense that their coefficients are nonnegative at any fundamental weight associated to $\nu$. This describes the ``parabolic" aspect of this block. Similar considerations apply to the second orbit, with the roles of $\mu$ and $\nu$ reversed. It is now fairly clear from translation arguments
that the underlying categories of these two blocks are determined up to equivalence by $\mu,\nu$ and the choice of sign. (The arguments in \cite[Sec.6]{PS6} show how translations available only ``in one direction" can give equivalences using highest weight category theory. Alternately, instead of quoting translation arguments, one can argue from Fiebig's excellent characterizations of blocks at non-critical levels for the full category $\mathbb O$, \cite[Thm.11]{Fiebig06}. See Proposition \ref{propabove} below and the discussion following it.)

Next, we discuss the categories on each side of (\ref{display}). The left-hand side is a full subcategory of its
ambient parabolic block, closed under extension, a highest weight subcategory associated to a finite poset ideal, in the original sense of \cite{CPS-1}. The right-hand side of the display is a quotient category, associated with a finite poset coideal. (It is a highest weight category, though the block itself does not exactly fit this formalism, failing to have enough injectives. However, the arguments in \cite[Thm.3.5(b)]{CPS-1} can be used to construct the quotient, using projectives, rather than injectives.) The paper \cite{SVV14} does not generally require their various categories $\mathbb O$ to consist of finitely generated objects, but this adjustment is needed (and used) here
to make sure all objects in the quotient have finite length. (As noted in \cite{SVV14} the adjustment is not entirely necessary, with its omission just giving a quotient identifying with the category of all modules for a finite dimensional algebra, rather than the category of finite dimensional modules.)

Finally, we discuss the parameters $w$ and $v$ and the poset ideals and coideals they control.   Again following \cite{SVV14}, let $I_{\mu}^{\text{max}}$ denote the set of maximal length left coset representatives (in the affine Weyl group, denoted $\widehat W$ in \cite{SVV14})  for $W_{\mu}$, and define $I_{\nu}^{\text{min}}$ as the set of minimal length left coset representatives for $W_{\nu}$. Define $(I_{\mu,-},\preceq )$ to be the poset $I_{\mu}^{\text{\rm max}}$ with order relation $\preceq$ the Bruhat order, and define $(I_{\nu,+},\preceq)$ to be the poset $I_{\nu}^{\text{min}}$ with order relation $\preceq$ the opposite Bruhat order. Next, define $I_{\mu,-}^{\nu}$ to be the subposet  consisting its elements $y$ for which $y\cdot o_{\mu,-}$ is $\nu$-dominant, and define $I_{\mu,+}^{\nu}$ analogously, interchanging the roles of $\mu$ and $\nu$, and of $+$ and $-$. Finally, for $w\in I_{\mu,-}^{\nu}$ let $^wI_{\mu,-}^{\nu}$ denote the (finite) poset ideal it generates, and, for $v\in I_{\nu,+}^{\mu}$ let $^vI_{\nu,+}^{\mu}$ denote the (finite) poset coideal it generates.  At this point, it is useful to pause, and note that these posets can all be used to index irreducible modules, taking dot products with $o_{\mu,-}$ or $o_{\nu,+}$, as appropriate, to get highest weights for them. We can now describe the left hand side of (\ref{display}) as the full subcategory, closed under extension, of $\mathbb O^{\nu}_{\mu,-}$  generated by the irreducible modules indexed by $^wI_{\mu,-}^{\nu}$. Similarly, the right-hand side is the quotient category of $\mathbb O^{\mu}_{\nu,+}$ obtained, for instance, by factoring out the Serre subcategory of all modules with no section an irreducible module indexed by an element of $^vI_{\nu,+}^{\mu}$. (As noted above, the quotient can also be taken using (Hom from) a suitable projective module. Also, as noted in the previous paragraph, a finitely generated module version of the parabolic block needs to be used, if only finite length objects are desired in the quotient.)

We can now give \cite{SVV14}'s sufficient condition for the categories on both sides of (\ref{display}) to be Koszul dual, and, at the same time, we give the quite elegant correspondence of labels for irreducibles that achieves this. In fact, the required correspondence
\begin{equation}\label{correspondence} \Psi^{\nu}_\mu:\, I_{\mu,-}^{\nu}\overset\sim\longrightarrow (I_{\nu,+}^{\mu})^{\text{\rm op}}\end{equation}
is given\footnote{The correspondence is not given a name in \cite{SVV14}.} uniformly for fixed $\mu$ and $\nu$ in the form of an anti-isomorphism from the posets $I_{\mu,-}^{\nu}$ to
the poset $I_{\nu,+}^{\mu}$. Let $w_{\mu}$ and $w_{\nu}$ denote the long words in $W_{\mu}$ and $W_{\nu}$, respectively. Then the anti-isomorphism is given by letting letting  $x\in I_{\mu,-}^{\nu}$  correspond to the affine Weyl group element $y=w_{\mu}x^{-1}w_{\nu}$. For any $x$, the element $y$ belongs to $I_{\nu,+}^{\mu}$ if and only if $x$ belongs to $I_{\mu,-}^{\nu}$ \cite[Lem.3.2]{SVV14}. The parameter $w$ in the left side of (\ref{display}) can be any such $x$, in which case $v$ on the right is taken to be the corresponding element $y$.  The anti-isomorphism $\Psi^\nu_\mu$ then restricts to an
anti-isomorphism  $^wI_{\mu,-}^{\nu}\overset\sim\longrightarrow ({^vI}^\mu_{\nu,+})^{\text{\rm op}}$. With $w$ and $v$ chosen in this way, part of the assertion of \cite[Thm.3.6]{SVV14} is that the two sides of (\ref{display}) are Koszul dual to each other, after relabeling the irreducibles (of one side or the other) using this anti-isomorphism. This also makes the underlying poset on one side of
(\ref{display}) the same as the opposite of the poset on the other side---all as expected for Koszul duality.

In some sense this completes our exposition of \cite[Thm.3.6]{SVV14} {\it per\,\,se}, but we have several more remarks to make which clarify the result and extend its scope. First,  the observations of \S5 show that {\it any} finite dimensional algebra whose module category is equivalent to one of the categories in the display inherits the standard Koszulity property, and is even balanced. Second, many more categories for affine Kac-Moody Lie algebras, allowing irreducible modules to have non-integral highest weights, also have these properties, as follows by applying the work of Fiebig  \cite[Thm. 11]{Fiebig06} (cited above). Since Fiebig works with the full (finitely generated version of) the category $\mathbb O$, one needs characterizations of the categories in (\ref{display})  that intrinsically fit his framework. The ``singular" labels of weight orbits carry over with no difficulty, but the ``parabolic" labels need to be treated more carefully. Part (a) of the following proposition, which is a restatement of \cite[Cor.3.3]{SVV14}, gives one way to do this. A somewhat more transparent ``double coset" version is given in part (b). For the latter, we introduce the notion of a {\it regular} double coset $XzY$ for two subgroups $X,Y$ of a given group $Z$. This is a double coset for which the intersection $z^{-1}Xz \cap Y$ is trivial. The definition is independent of the representative element $z$ of the double coset. Note that the ``inverse" double coset $Yz^{-1}X$ of a regular $(X,Y)$ double coset is a regular $(Y,X)$ double coset.

\begin{prop}\label{propabove} (a) The set $I_{\mu,-}^{\nu}$ consists precisely of the elements $x$  with $xw_{\mu}$ in $(I_{\nu}^{\text{\rm max}})^{-1} \cap I_{\mu}^{\text{\rm min}}$. Similarly, $I_{\nu,+}^{\mu}$ consists precisely of the elements $y$ with $yw_{\nu}$ in $(I_{\mu}^{\text{\rm min}})^{-1} \cap I_{\nu}^{\text{\rm max}}$.

(b) Also, $I_{\mu,-}^{\nu}$ consists precisely of maximal length representatives of regular $(W_{\nu}, W_{\mu})$ double cosets. Similarly, $I_{\nu,+}^{\mu}$ consists precisely of minimal length representatives of regular $(W_{\mu}, W_{\nu})$ double cosets.
\end{prop}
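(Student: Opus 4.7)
The plan is to prove (a) by converting the $\nu$-dominance of $x\cdot o_{\mu,-}$ into a descent condition on $xw_\mu$, and then to deduce (b) from (a) via the standard length formulas for regular double cosets.

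For part (a), the involution $\sigma\mapsto\sigma w_\mu$ is length-reversing on $W_\mu$, so $x$ lies in $I_\mu^{\text{max}}$ if and only if $y:=xw_\mu$ lies in $I_\mu^{\text{min}}$, making the ``$\cap\,I_\mu^{\text{min}}$'' clause of the characterization automatic. Since $W_\mu$ fixes $o_{\mu,-}$ under the dot action, $x\cdot o_{\mu,-}=y\cdot o_{\mu,-}$, and it suffices to prove that $y\cdot o_{\mu,-}$ is $\nu$-dominant if and only if $s_\alpha y<y$ in length for every $\alpha\in\nu$ (the condition $y\in(I_\nu^{\text{max}})^{-1}$). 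The key step is: for a simple root $\alpha$, the strict inequality $s_\alpha\cdot(y\cdot o_{\mu,-})<y\cdot o_{\mu,-}$ holds if and only if $s_\alpha y<y$. I would verify this by a short case analysis on $y^{-1}\alpha$. In case (i), $y^{-1}\alpha$ is a positive root outside the root subsystem attached to $\mu$, giving $\ell(s_\alpha y)=\ell(y)+1$ and $s_\alpha y$ as the min-length representative of a different $W_\mu$-coset, so $s_\alpha y\cdot o_{\mu,-}>y\cdot o_{\mu,-}$. In case (ii), $y^{-1}\alpha$ is a positive root inside that subsystem, so $y^{-1}s_\alpha y=s_{y^{-1}\alpha}\in W_\mu$ and $s_\alpha y\in yW_\mu$, giving equality. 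In case (iii), $y^{-1}\alpha<0$; the condition $y\in I_\mu^{\text{min}}$ (sending positive $W_\mu$-roots to positive roots) forces $-y^{-1}\alpha\notin W_\mu$'s root system, so $s_\alpha y$ again lies in a different $W_\mu$-coset, with min-length representative of length at most $\ell(y)-1$, yielding $s_\alpha y\cdot o_{\mu,-}<y\cdot o_{\mu,-}$. Since ``$\nu$-dominant'' in the paper's sense (unshifted coefficients $\ge 0$) corresponds to strict descent $s_\alpha\cdot\lambda<\lambda$ for all $\alpha\in\nu$, only case (iii) can occur, giving precisely $y\in(I_\nu^{\text{max}})^{-1}$. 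The second assertion for $I_{\nu,+}^\mu$ is handled dually, using the dominant $o_{\nu,+}$ in place of $o_{\mu,-}$ and interchanging min- and max-length representatives.

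For part (b), let $x\in I_{\mu,-}^\nu$. By (a), $y:=xw_\mu$ satisfies $sy<y$ for all $s\in\nu$ and $ys>y$ for all $s\in\mu$. These descent properties yield two length formulas: $\ell(\sigma y)=\ell(y)-\ell(\sigma)$ for $\sigma\in W_\nu$ (writing $y=w_\nu y_{\min}$ with $y_{\min}$ minimum length in $W_\nu y$, and combining $\ell(\sigma w_\nu)=\ell(w_\nu)-\ell(\sigma)$ with the additivity $\ell(\tau y_{\min})=\ell(\tau)+\ell(y_{\min})$ for $\tau\in W_\nu$), and $\ell(y\tau)=\ell(y)+\ell(\tau)$ for $\tau\in W_\mu$ (the standard additivity for elements of $I_\mu^{\text{min}}$). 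To verify that $W_\nu yW_\mu$ is regular, suppose $y\tau=\sigma y$ with $\sigma\in W_\nu$ and $\tau\in W_\mu$; the two length formulas force $\ell(y)+\ell(\tau)=\ell(y)-\ell(\sigma)$, hence $\ell(\sigma)=\ell(\tau)=0$, so $y^{-1}W_\nu y\cap W_\mu=\{1\}$ and the double coset has $|W_\nu|\cdot|W_\mu|$ elements. Then $x=yw_\mu=w_\nu y_{\min}w_\mu$ has length $\ell(w_\nu)+\ell(y_{\min})+\ell(w_\mu)$, which is the unique maximum over this regular double coset. Conversely, if $x=w_\nu y_{\min}w_\mu$ is the maximum length element of a regular $(W_\nu,W_\mu)$ double coset, then $y:=xw_\mu=w_\nu y_{\min}$, and the length additivity guaranteed by regularity, combined with $sy_{\min}>y_{\min}$ for $s\in\nu$ and $y_{\min}s>y_{\min}$ for $s\in\mu$, directly yields $sy<y$ for $s\in\nu$ and $ys>y$ for $s\in\mu$; (a) then gives $x\in I_{\mu,-}^\nu$.

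The main obstacle will be the case analysis in the key step of (a), particularly the borderline case (ii) where $y^{-1}\alpha$ is a positive root inside $W_\mu$'s root subsystem; this is exactly the case that pins down the precise meaning of ``$\nu$-dominant'' in the paper (unshifted coefficients $\ge 0$, equivalent to the strict dominance condition $(\lambda+\rho,\alpha^\vee)\ge 1$ in the dot-action sense) and allows the clean equivalence with a length descent condition. Once this combinatorial point is settled, part (b) reduces routinely to the length additivity formulas and the standard description of maximum length representatives of regular double cosets.
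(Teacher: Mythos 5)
Your argument is correct as far as it goes, but it diverges from the paper in one notable respect and leaves a genuine (if easily repaired) gap.

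\textbf{Where you differ from the paper.} The paper treats part (a) as a citation: it notes (a) is ``an almost verbatim restatement'' of \cite[Cor.~3.3]{SVV14} and devotes the proof entirely to reducing (b) to (a). You instead prove (a) from scratch by unwinding $\nu$-dominance of $y\cdot o_{\mu,-}$ into the descent condition $s_\alpha y<y$. Your case analysis is sound, and the one subtlety you flag (case (ii)) is exactly right: when $y^{-1}\alpha$ is a positive root in $\Phi_\mu$, one gets $(y\cdot o_{\mu,-},\alpha^\vee)=-1$ so neither the weight inequality nor the length descent holds, and the biconditional survives vacuously. Your proof also correctly uses, in case (iii), that $y\in I_\mu^{\text{min}}$ forces $y^{-1}\alpha<0\Rightarrow y^{-1}\alpha\notin\Phi_\mu$. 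So your direct verification of (a) is a legitimate (and more self-contained) alternative to the citation. For the main implication in (b) you and the paper are close: the paper explicitly appeals to the Howlett--Kilmoyer theory of distinguished double coset representatives (citing \cite[\S 4.3]{DDPW}) to identify the unique maximum-length element of a regular double coset, whereas you re-derive the relevant length additivity from the two one-sided descent formulas. That is fine, though in the converse direction you silently invoke the full Howlett--Kilmoyer length additivity (``the length additivity guaranteed by regularity'') which you never established for general $\sigma y_{\min}\tau$; the cleaner route, and the one the paper takes, is simply to cite that theory for both directions.

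\textbf{The gap.} You never address the second assertion of part (b), namely that $I^{\mu}_{\nu,+}$ consists precisely of the \emph{minimal}-length representatives of regular $(W_\mu,W_\nu)$ double cosets. Saying the second assertion of (a) ``is handled dually'' covers (a), but you give no argument carrying this over to (b). The paper dispatches it economically via the anti-isomorphism $\Psi^\nu_\mu\colon I^{\nu}_{\mu,-}\to(I^{\mu}_{\nu,+})^{\mathrm{op}}$, $x\mapsto w_\mu x^{-1}w_\nu$: once one knows $I^{\nu}_{\mu,-}$ consists of maximal-length elements $w_\nu z w_\mu$ of regular double cosets (with $z$ distinguished), applying $\Psi^\nu_\mu$ and inverting shows $I^{\mu}_{\nu,+}$ consists of the elements $z^{-1}$, which are the minimal-length representatives of the (automatically regular) inverse double cosets $W_\mu z^{-1}W_\nu$. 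You should either run this anti-isomorphism argument or explicitly redo your (b) computation dually from your second assertion of (a); as written, half of (b) is unproved.
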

\begin{proof} Part (a) is, as noted, an almost verbatim restatement of \cite[Cor.3.3]{SVV14}. We just reduce part (b) to it. Clearly the elements given in part (b) belong to the counterpart sets described in part (a).  Suppose next $xw_{\mu}\in (I_{\nu}^{\text{max}})^{-1} \cap I_{\mu}^{\text{min}}$. Then $w_{\mu}x^{-1}$ is a maximal length element of its left $W_{\nu}$-coset. Equivalently, $xw_{\mu}$ is a maximal length element of its right $W_{\nu}$ coset. This means it has the form $w_{\nu}d$ where the length of $xw_{\mu}$ is the sum of the lengths of $w_{\nu}$ and of $d$. Since also $xw_{\mu}\in I_{\mu}^{\text{min}}$ the length of the element $x=w_{\nu} d w_{\mu}$ is the sum of the lengths of its factors $w_{\nu}$, $d$, and  $w_{\mu}$. This is obviously the maximal possible length for an element of the double coset $W_{\nu}d W_{\mu}$. Applying a well-known theory of Howlett and Kilmoyer (see, e.g., \cite[\S4.3]{DDPW}), this can only occur if $d$ is a distinguished (minimal length) double coset representative, and the intersection $d^{-1}W_{\nu}d \cap W_{\mu}$ is trivial. Thus, $x$ is the maximal length element in a regular $(W_{\nu}, W_{\mu})$  double coset.  Hence, the two characterizations of $I_{\mu,-}^{\nu}$ in parts (a) and (b) agree. Applying the (inverse of) $\Psi^\nu_\mu$ in (\ref{correspondence}), if $y\in I_{\nu,+}^{\mu}$, then $w_{\nu} y^{-1} w_{\mu} $ belongs to $I_{\mu,-}^{\nu}$. So, we now know it is of maximal length in its $(W_{\nu}, W_{\mu})$  double coset, which we also know is regular. The maximal length element of any such regular double coset has the form $w_{\nu} z w_{\mu}$, where $z$ is its distinguished element of minimal length. Comparing the two expressions we have for the maximal length element gives that $y=z^{-1}$ has minimal length in its $W_{\mu}, W_{\nu}$ double coset. The latter double coset is obviously regular, since its inverse is regular. This completes the proof.
\end{proof}

Now, in the framework of \cite{SVV14}, the parabolic blocks $\mathbb O^{\nu}_{\mu,-}$ and $\mathbb O^{\mu}_{\nu,+}$ are precisely the full subcategories, of their ambient category $\mathbb O$ block, of objects whose irreducible sections have highest weights indexed by the affine Weyl group elements in $I_{\mu,-}^{\nu}$ or $I_{\nu,+}^{\mu}$, respectively. Proposition \ref{propabove} shows these indexing elements are characterized inside the affine Weyl group in terms of the subgroups $W_{\mu}, W_{\nu}$. All of this Coxeter group information, together with the signs $-$ or $+$ associated to the level, carry over to the context studied by Fiebig in \cite[p.34 bottom, Thm.11]{Fiebig06} for non-integral weights.

Specifically, working with symmetrizable Kac-Moody Lie algebras, \cite{Fiebig06} considers the block $\Lambda$ of the (non-integral, finitely-generated) category $\mathbb O$ corresponding to a non-integral ``dominant" or anti-dominant weight $\lambda$, not at the critical level. (Weights or blocks below the critical level are called ``negative", and above are called ``positive," just as with the signs $-$ and $+$ we have been using here.) Associated to $\Lambda$ is an ``integral Weyl group" $\mathcal{W}(\Lambda)$ generated by the real root reflections in the ambient Kac-Moody Weyl group that move $\lambda$ by an integral multiple of the reflection's underlying root.
This integral Weyl group is a Coxeter group with generators $\mathcal{S}(\Lambda)$,  and $\Lambda=\mathcal{W}(\Lambda)\cdot \lambda$. The stabilizer stab$(\lambda)$ of $\lambda$ under the dot action is generated by a subset of $\mathcal{S}(\Lambda)$.  Then, \cite[Thm.11]{Fiebig06} says, briefly, that, together with the ``negative" or ``positive" nature of $\lambda$, the Coxeter group $\mathcal{W}(\Lambda)$ with its generating set $\mathcal{S}(\Lambda)$ and subgroup stab$(\lambda)$, is sufficient to determine the block up to a category equivalence.  The proof shows that any two blocks, possibly of different symmetrizable Kac-Moody Lie algebras, but with the same sign and Coxeter group information, are equivalent as $\mathbb C$-categories by an equivalence preserving the Coxeter group indexing of irreducibles.

In particular, it makes sense to define categories $^w\mathbb O^{\nu}_{\mu,-} \,\,\text{and}\,\, ^v\mathbb O^{\mu}_{\nu,+}$ in Fiebig's context, provided $\mathcal{W}(\Lambda)$ is an affine Weyl group $\neq$ stab$(\lambda)$. In that case we can think of  $\nu$ and $\mu$  as proper subsets of $\mathcal{S}(\Lambda)$. (In the original \cite{SVV14} set-up, they are sets of fundamental roots, but these can be identified with their corresponding sets of fundamental reflections.) So, for example, suppose $\lambda$ above is anti-dominant, and $\mu$ is defined by the equality $W_{\mu}=$ stab$(\lambda)$. Let $\nu$ be any proper subset of $\mathcal{S}(\Lambda)$ and $W_{\nu}uW_{\mu}$ any regular double coset, with $w$ its element of maximal length. Then the category $^w\mathbb O^{\nu}_{\mu,-}$ is defined to be the full subcategory of $\Lambda$ formed by those of its objects for which all irreducible sections have highest weight $x\cdot \lambda$ with $x$ the longest element in a regular $(W_{\nu}, W_{\mu})$  double coset and $x\leq w$. Of course, in many common situations (such as the original \cite{SVV14} set-up) the condition on $x$ will imply some version of
$\nu$-dominance, but we need not insist upon it.  We still have the conclusion that the category just defined is standard Koszul and balanced, by combining
\cite[Thm.3.6]{SVV14} and \cite[Thm. 11]{Fiebig06}. A similar definition and conclusion can be made for $^v\mathbb O^{\mu}_{\nu,+}$, though the construction must proceed in two steps, first to get a full subcategory $\mathbb O^{\mu}_{\nu,+}$ of $\Lambda$ (now a ``positive" level block), then passing to a quotient category to get $^v\mathbb O^{\mu}_{\nu,+}$.

One important case where we can be sure that $\mathcal{W}(\Lambda)$ is an affine Weyl group occurs when $\lambda$ is an anti-dominant rational weight of a certain form for an affine Lie algebra, still called here $\bold g$ in keeping with earlier notation in this section. The underlying finite root system is assumed indecomposable, and $\lambda$ is required to have integer coefficients at fundamental weights corresponding to these
roots, but the level $k<-g$ of $\lambda$ may be a rational number, not necessarily an integer. The set of all such weights $\lambda$ is called $\mathcal{C}^-_{\text{rat}}$ in \cite{PS6}, which develops further a theory discussed in  \cite[\S 6]{T} without naming the set of weights involved. While  \cite[\S 6]{T} works primarily with the commutator algebra $\tilde{ \bold g}= [\bold g, \bold g]$, it nevertheless follows from their results that $\mathcal{W}(\Lambda)$, as defined above, is an identifiable affine Weyl group (not always of the same type as $\bold g$).

 In \cite[\S 6]{T}, in preparation for discussing the Kazhdan-Lusztig functor, Tanisaki discusses a category $\mathcal O_k$ of certain $\tilde{\bold g}$ modules with level $k$ as above. Its blocks are naturally equivalent to categories of $\bold g$-modules, as discussed \cite{PS6}, with the restriction functor providing the equivalence. More precisely, each block of ${\mathcal O}_k$ is equivalent to a category $\mathbb O^+(\lambda)$ discussed in \cite[$\S 4,\S 5$]{PS6}, with $\lambda \in \mathcal{C}^-_{\text{rat}}$ of level $k$. When $\lambda$ is integral, we can take it as $o_{\mu,-}$ for a block  $\mathbb O^{\nu}_{\mu,-}$ with $\nu$ the set of fundamental roots in the finite root system, and $\mu$ the set of fundamental roots corresponding to the fundamental reflections in stab$(\lambda)$. If $\lambda$ isn't integral, we can use essentially the same notation, as discussed above. In any case, each resulting block is, by the discussion above, the union of full subcategories, corresponding to the finite poset ideals $^wI^{\nu}_{\mu,-}$, each of which is standard Koszul and balanced. (That is, these full subcategories are each equivalent to finite dimensional module categories for finite dimensional algebras with the standard Koszul and balanced properties.) We remark that, by general highest weight category theory, these properties are inherited by the full subcategories corresponding to any finite poset ideal in  $I^{\nu}_{\mu,-}$.

  We are now ready to deduce the same properties for quantum group blocks, whenever the Kazhdan-Lusztig functor is an equivalence. The latter, as discussed in \cite{T},  is a functor $F_{\ell}:\mathcal{O}_k \to \mathcal{Q}_{\ell}$. The target on the right-hand side is, in effect,  the category of all finite dimensional type 1 modules\footnote{This terminology is not used in \cite{T}, but instead the modules are described as having classical weight space decompositions with appropriate actions of standard generators.} for the Lusztig quantum group over $\mathbb C$ at a primitive $\ell${th} root of unity $\zeta$. The root system associated to the quantum group is the finite root system whose affine version is that of $\bold g$.  The relation between $\ell$ and $k$ is that $k=-(\ell/2D)-g$, where $D\in \{1,2,3\}$ is 1 when the associated finite root system is simply-laced, 2 for types $B,C$, or $F_4$ and 3 for type $G_2$. The cases where $F_{\ell}$ is known to be an equivalence are listed in \cite[p.273]{T}. One typical case occurs when $\zeta^2$ is an $e$th root of unity, with $e$ relatively prime to $D$; an equivalence is listed to occur for any such $e>h$, and for any such $e>0$ in type $A$ or $D_{2n}$, with $e>2$ working for the remaining type $D$ cases.

  When $F_\ell$ is an equivalence, it takes irreducible objects to irreducible objects. The statement of \cite[Thm.7.1]{T} shows that the highest weights of these two corresponding irreducibles have the same coefficients at fundamental weights associated to the underlying finite root system. This makes it tempting to use the dominance order for the highest weight order in both domain and range of $F_\ell$, but this actually leads to complications on the left.  Another ``natural" choice is to use the intrinsic order arising from standard modules (which, in both domain and range, have the same composition factors as costandard modules with the same indexing). This works, but is a little abstract, especially on the left. There, we have been using, in our discussion above,  block-by-block Bruhat orders. All of these issues are thoroughly discussed in \cite{PS6}. See especially  \cite[Appendix I]{PS6}, which proves, among  other ordering results, that the traditional ``up arrow" order $\uparrow$ on the right is, within any given block, equivalent to a Bruhat order on the right--actually two of them, one using a base weight in the lowest dominant closed alcove, and the other starting from its anti-dominant counterpart. The latter gives an easy way to match up weights with and partial orders on both domain and range, working block-by-block. Then, as required, we can translate back to the order $\uparrow$ as needed.

  As a corollary of all of the above discussion, we have the following result, ultimately a corollary of \cite[Thm.3.6]{SVV14}, though also depending, as indicated, on work of other authors and our synthesis here. For an abelian category $\sC$ and an indexing by a set $\Gamma$ of some of its simple objects, let $\sC[\Gamma]$ denote the full subcategory of $\sC$ consisting of all objects whose simple sections may all be found among those indexed by $\Gamma$.

  \begin{cor}\label{generalSVV} Assume the Kazhdan-Lusztig functor $F_\ell$ above is an equivalence, for a given $\bold g$ and $\ell$. Let $\Gamma$ be any finite poset, using either the dominance  or $\uparrow$ order, in the dominant weights for the  finite root system associated to the quantum group module category $\mathcal{Q}_{\ell}$ above. Then $\mathcal{Q}_{\ell}[\Gamma]$ is a highest weight category with respect to the given partial order on $\Gamma$, equivalent to the module category of a finite dimensional quasi-hereditary algebra $B'$ over $\mathbb C$.  The algebra $B'$, or any algebra Morita equivalent to it, is standard Koszul and balanced. In particular, $B'\cong \gr B'$.
  \end{cor}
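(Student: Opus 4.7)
The plan is to transport the question across the Kazhdan--Lusztig equivalence $F_\ell$ and then invoke the affine-side standard Koszul and balanced conclusions already assembled in the exposition preceding the corollary. Since block decompositions are finite, it suffices to treat the case where $\Gamma$ lies in a single block of $\mathcal{Q}_\ell$. Moreover, by replacing $\Gamma$ with the finite poset ideal $\widetilde{\Gamma}$ it generates (finite because each dominant weight has only finitely many dominant weights beneath it in the chosen order), it is enough to handle the case that $\Gamma$ is itself a finite poset ideal; the conclusion for the original $\Gamma$ follows by idempotent truncation inside $\mathcal{Q}_\ell[\widetilde{\Gamma}]$.

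Next I would pull back via $F_\ell^{-1}$. Using the weight matching of \cite[Thm.7.1]{T} and the block-by-block reformulation of $\uparrow$ (and of dominance) in terms of Bruhat orders given in \cite[Appendix I]{PS6}, $\Gamma$ corresponds to a finite poset ideal $\Gamma'$ of highest weights in a single block $\mathcal{B}$ of $\mathcal{O}_k$, with its partial order matched to the Bruhat order intrinsic to the affine side. By the exposition preceding the corollary, $\mathcal{B}$ is equivalent (via \cite[Thm.11]{Fiebig06}) to a block of the form $\mathbb{O}^{\nu}_{\mu,-}$, with irreducibles indexed by $I^{\nu}_{\mu,-}$, and $\Gamma'$ is contained in some ${^w}I^{\nu}_{\mu,-}$. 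Combining \cite[Thm.3.6]{SVV14} with this Fiebig extension, the category ${^w}\mathbb{O}^{\nu}_{\mu,-}$ is equivalent to $C\text{--mod}$ for a finite dimensional balanced, standard Koszul $\mathbb{C}$-algebra $C$. Passing to the highest weight subcategory cut out by the smaller ideal $\Gamma'$ corresponds to an idempotent truncation $eCe$, and by general highest weight category theory (standard and costandard modules, tilting resolutions, and Koszul complexes all restrict appropriately to poset ideals) the truncation $eCe$ again inherits the balanced and standard Koszul properties from $C$.

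Applying $F_\ell$ back the other way identifies $\mathcal{Q}_\ell[\Gamma]$ with a highest weight category equivalent to $B'\text{--mod}$ for a finite dimensional quasi-hereditary algebra $B'$ over $\mathbb{C}$ Morita equivalent to $eCe$. Corollary \ref{MoritaCarryOver} then immediately transfers the standard Koszul and balanced properties to $B'$ and to any algebra Morita equivalent to it. For the last clause $B'\cong\gr B'$, choose the positive grading on $B'$ furnished by Proposition \ref{Moritagraded}: since $B'$ is Koszul in the classical sense, $B'_0$ is semisimple and $B'$ is generated in degree one over $B'_0$, so $\rad^n B' = B'_{\geq n}$ grade by grade and
$$\gr B' \;=\; \bigoplus_{n\geq 0} B'_{\geq n}/B'_{\geq n+1} \;\cong\; \bigoplus_{n\geq 0} B'_n \;=\; B'.$$

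The main obstacle, given the machinery of the preceding sections, is not extracting the standard Koszul and balanced properties themselves but rather the bookkeeping required to match the partial orders on the two sides of $F_\ell$ and to ensure that the finite poset ideal $\Gamma$ on the quantum side is faithfully tracked through $F_\ell^{-1}$ and the SVV parametrization. This is precisely what the ``base weight'' comparison of \cite[Appendix I]{PS6} supplies, permitting Bruhat, $\uparrow$, and (within a block) dominance to be used interchangeably, so that a highest weight category structure on one side produces one on the other with matched poset.
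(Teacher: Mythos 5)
Your overall strategy---transport across the Kazhdan--Lusztig equivalence $F_\ell$, identify the image block via Fiebig's combinatorial characterization of non-critical blocks and the SVV parametrization from the preceding discussion, then descend the standard Koszul and balanced properties to the finite truncation and bring them back across $F_\ell$ via Corollary~\ref{MoritaCarryOver}---is exactly the route the paper follows (the paper's ``proof'' is literally a pointer to the preceding discussion), and your closing observation that $\rad^n B' = B'_{\geq n}$ for a Koszul grading, giving $B'\cong\gr B'$, is correct.

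There is, however, a recurring conceptual slip that you should fix. Passing from the highest weight category attached to $^wI^{\nu}_{\mu,-}$ to the highest weight subcategory cut out by a smaller finite poset \emph{ideal} $\Gamma'$ is realized at the algebra level by a quotient $C/CeC$ by a heredity ideal, \emph{not} by the idempotent truncation $eCe$. Idempotent truncation $eCe$ corresponds to a poset \emph{coideal} and yields a Serre \emph{quotient} category, which is the opposite operation. Both operations happen to preserve standard Koszulity and balancedness in the quasi-hereditary setting, so the ultimate conclusion survives, but conflating them is a genuine error of principle. The same confusion undermines your initial reduction: for $\Gamma$ a proper, non-ideal subset of the poset ideal $\widetilde{\Gamma}$ it generates, the Serre subcategory $\mathcal{Q}_\ell[\Gamma]$ is not obtained from $\mathcal{Q}_\ell[\widetilde{\Gamma}]$ by idempotent truncation, and in general need not even be a highest weight category with poset $\Gamma$. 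The corollary should be read with $\Gamma$ a finite poset \emph{ideal} (as the remark immediately following it and the surrounding discussion of poset ideals make clear); under that reading your reduction step is both unnecessary and invalid, and can simply be deleted.
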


  The proof has already been given in the preceding discussion. Note that any poset ideal of dominant weights with respect to the dominance order is also a poset ideal with respect to the order $\uparrow$.

  \begin{rem}\label{q-Schur}(a) Note that we work over  $\mathbb C$ in the above corollary and throughout this section, whereas in similar situations in $\S 1$ we were working over the field $\mathbb Q(\zeta)$. The algebra $B'$ above may be obtained by base change from a corresponding algebra $B$ over $\mathbb Q(\zeta)$. We don't know if $B$ shares the standard Koszul and balanced properties that $B'$ has, but $\gr B$ does share them, from the last assertion of the corollary.

  (b) In \cite[Cor.6.6]{SVV14} it is proved that the $q$-Schur algebra is Morita equivalent to an algebra that is (standard Koszul and)
balanced. Context indicates $q$ is a root of unity (arbitrary), and the underlying field is $\mathbb C$. The previous section shows, then, that the $q$-Schur algebra itself,
over $\mathbb C$, is standard Koszul and balanced. All the conclusions of the corollary apply to it, or to any Morita equivalent algebra. The $q$-Schur algebra may, of course, be obtained by base-change from an algebra over $\mathbb Q(\zeta)$, so the remark above applies, as well.
\end{rem}
\section{\bf Some complements to \cite[Conj. II]{PS14}}
We have already mentioned one of the three main conjectures in \cite{PS14}, namely Conjecture \ref{conjI} concerning the ubiquity of standard Q-Koszul algebras in modular representation theory. The next conjecture appears not to involve Q-Koszul algebras or forced gradings at all, but, as we shall see,
their roles are hidden. The conjecture says simply that certain Ext group dimensions (for the algebraic
group $G$) can be computed from corresponding dimensions
for a quantum group at a $\ell(p)$th root of unity $\zeta$. (Recall that $\ell(p)=p$ for each odd prime, and  $=4$ when $p=2$.) The Ext groups for $A$ below are the same as the corresponding Ext groups for $G$, and the Ext groups for $B$ are the same as for the quantum enveloping algebra $U_{\zeta}$. (The algebraic groups case is well-known, and the quantum case may be found in \cite{DS}.)

\medskip\noindent
\begin{conj}\label{conjII} Continue to assume the hypotheses and notation of Conjecture \ref{conjI}.  (So, in particular, $p$ is KL-good, $\Gamma$ is a finite poset ideal of dominant weights associated to the root system of a semisimple algebraic group $G$, and $A:=\Dist(G)_\Gamma$ is a finite-dimensional quasi-hereditary algebra.) Using the same poset $\Gamma$, let $B=U_{\zeta,\Gamma}$ be the corresponding algebra for the quantum enveloping algebra $U_{\zeta}$ associated to that root system
at a primitive $\ell(p)$th root of unity $\zeta$ (as in Theorem \ref{KoszulforgeneralizedqSchur}).  Then
$$\begin{cases}(1)\quad\dim\Ext^n_{B}(\Delta_{\zeta}(\lambda),L_{\zeta}(\mu))&=\dim\Ext^n_A(\Delta(\lambda),\rnabla(\mu)),\\
(2)\quad \dim\Ext^n_{B}(L_{\zeta}(\lambda),\nabla_{\zeta}(\mu)) & =\dim\Ext^n_A(\rDelta(\lambda),\nabla(\mu))\\
(3)\quad \dim\Ext^n_{B}(L_{\zeta}(\lambda),L_{\zeta}(\mu)) & =\dim\Ext^n_A(\rDelta(\lambda),\rnabla(\mu))\end{cases},
\quad \forall \lambda,\mu\in\Lambda, \forall n\geq 0$$
\end{conj}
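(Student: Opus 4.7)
The plan is to reduce both sides of each equality to graded \,Ext computations that can be compared directly, exploiting the Q-Koszul machinery on the modular side and the Koszul machinery on the quantum side. I will restrict attention to the setting where Theorem \ref{bigtheorem} applies, so that $\wgr A$ is standard Q-Koszul with standards $\wgr\Delta(\lambda)$, $\lambda\in\Gamma$, and with the grade-zero standards and costandards of $(\wgr A)_0$ given by $\rDelta(\lambda)$ and $\rnabla(\lambda)$. This is where the evidence is strongest and the partial proof should be expected.

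The first step is to pass from Ext over $A$ (equivalently, over $G$) to Ext over the graded algebra $\wgr A$ using Theorem \ref{comparisonofcohomology}. For instance, for equation (1),
$$\Ext^n_A(\Delta(\lambda),\rnabla(\mu))\;\cong\;\Ext^n_{\wgr A}(\wgr\Delta(\lambda),\rnabla(\mu)),$$
and analogous isomorphisms hold for the Ext groups appearing in (2) and (3). The second step is to invoke the standard Q-Koszul property, together with Theorem \ref{betterdefn}, to collapse each of these ungraded Ext groups onto a single graded piece: in (1) the only non-vanishing contribution comes from the shift $\rnabla(\mu)\langle n\rangle$, so
$$\dim\Ext^n_{\wgr A}(\wgr\Delta(\lambda),\rnabla(\mu))\;=\;\dim\grExt^n_{\wgr A}(\wgr\Delta(\lambda),\rnabla(\mu)\langle n\rangle),$$
with parallel statements for the $(\rDelta,\nabla)$ and $(\rDelta,\rnabla)$ pairings entering (2) and (3).

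On the quantum side, Theorem \ref{KoszulforgeneralizedqSchur} together with Corollary \ref{generalSVV} and Remark \ref{q-Schur} allow us to replace $B$ by its standard Koszul graded companion $\gr B$ without altering Ext dimensions. The standard Koszul property then forces the quantum Ext groups in the three formulas to be concentrated in the appropriate graded degree, yielding, for example,
$$\dim\Ext^n_B(\Delta_\zeta(\lambda),L_\zeta(\mu))\;=\;\dim\grExt^n_{\gr B}(\Delta_\zeta(\lambda),L_\zeta(\mu)\langle n\rangle).$$
At this point each side of each conjectural equality has been expressed as the dimension of a single graded Ext group, one over $\wgr A$ in characteristic $p$ and the other over $\gr B$ in characteristic zero.

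The final and hardest step is to identify these two graded dimensions. The natural vehicle is the integral forced-graded algebra $\gr\wA$ of Section 2, together with its grade-zero piece, which after Section 2 is essentially $\wfa$. By Theorem \ref{mainquantumresult} and the discussion at the end of Section 2, $\wfa$ carries a positive grading base-changing on the one hand to the Koszul algebra $u'_\zeta$ (inside the quantum setup governing $B$) and on the other hand to $\fa=u'$ (inside the modular setup governing $A$), and the standard/costandard modules involved in both sides admit compatible integral lifts $\wgr\wDelta(\lambda)$, $\rnabla(\mu)$, etc. The goal is to check that the two graded Ext groups above arise as the two base changes of the \emph{same} graded Ext group computed over $\gr\wA$ (or over $\wfa$, depending on which of (1)--(3) is at issue) applied to the corresponding integral graded modules; dimensions then agree because the cohomology of the integral resolution is torsion-free in the appropriate degrees, a consequence of the Q-Koszul/Koszul concentration already established. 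The main obstacle is precisely this last transfer: verifying torsion-freeness of the integral graded Ext, and matching $\rDelta(\lambda)$ at the modular level with a graded lift of $L_\zeta(\lambda)$ at the quantum level in a manner compatible with both the $\rDelta$- and $\Delta_\zeta$-filtrations. Once (1) is established this way, (2) follows by a left--right symmetric argument, and (3) follows by composing the arguments for (1) and (2) or, more directly, by applying the same integral comparison to the pair $(\rDelta(\lambda),\rnabla(\mu))$ on the modular side and $(L_\zeta(\lambda),L_\zeta(\mu))$ on the quantum side.
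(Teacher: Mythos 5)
What you are attempting to prove is Conjecture II of \cite{PS14}; it remains a \emph{conjecture}, and the paper does not prove it but only reduces it. Your overall strategy --- pass from ungraded Ext to graded Ext on both sides (Q-Koszul on the modular side, standard Koszul on the quantum side), then compare the two graded Ext groups by base change through an integral form --- does track the paper's own reduction of Conjecture \ref{conjII} (given Conjecture \ref{conjI}) to the conjunction of Conjecture \ref{conjIIa} and Theorem \ref{conjIIb}, with the base change handled in the graded setting.

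However, there are two gaps. First, you restrict from the outset to the regime of Theorem \ref{bigtheorem} ($p\geq 2h-2$ odd, LCF, $p$-regular weights), precisely so that Theorem \ref{comparisonofcohomology} can supply the isomorphism $\Ext^n_A(\Delta(\lambda),\rnabla(\mu))\cong\Ext^n_{\wgr A}(\wgr\Delta(\lambda),\rnabla(\mu))$ that your first step uses. But in that regime Conjecture \ref{conjII} is already known from \cite[Thm.~5.4]{CPS09}, so you gain nothing new. In the genuinely open regime --- general KL-good $p$, an arbitrary finite ideal $\Gamma$, no regularity assumption --- Theorem \ref{comparisonofcohomology} is not available, and the isomorphism you need for your first step is precisely Conjecture \ref{conjIIa}, which remains unproved. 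You do not flag this, and your proposal tacitly treats it as settled. Second, you locate the ``hardest step'' in the integral comparison and torsion-freeness of graded Ext; in fact the paper states that this base-change step is ``extremely easy to handle because of the Q-Koszul property,'' since once Ext is concentrated in a single graded degree on both sides, freeness over $\sO$ and equality of dimensions follow quickly. The actual remaining obstruction to Conjecture \ref{conjII} (assuming Conjecture \ref{conjI}) is Conjecture \ref{conjIIa}, not the integral torsion-freeness you single out.
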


When $p>h$ and the Lusztig
character formula holds for $G$ and $p$-restricted dominant weights, this conjecture is proved for $p$-regular weights in \cite[Thm. 5.4]{CPS09}.
Some interesting cases can be proved assuming only $p>h$, along the lines of Theorem \ref{goodfiltrationtheorem}(c); see the discussion in \cite{PS14}. In the latter paper, we showed that, in cases where Conjecture \ref{conjI} is true,
Conjecture \ref{conjII} above reduces to two further conjectures, one of these can now be stated below as a theorem. It follows from Corollary \ref{generalSVV}, proved in the previous section. (Note that the Ext group dimensions in the theorem remain the same after base change to $\mathbb C$.)

\begin{thm}\label{conjIIb}Under the hypothesis of Conjecture \ref{conjII}, we have
$$\begin{cases}(1)\quad\dim\Ext^n_{B}(\Delta_{\zeta}(\lambda),L_{\zeta}(\mu))&=\dim\Ext^n_{\gr B}(\gr \Delta_{\zeta}(\lambda),L_{\zeta}(\mu))\\
(2)\quad\dim\Ext^n_{B}(L_{\zeta}(\lambda),\nabla_{\zeta}(\mu))&=\dim\Ext^n_{\gr B}(L_{\zeta}(\lambda),\gr^\diamond\nabla_{\zeta}(\mu)),\\
(3)\quad\dim\Ext^n_{B}(L_{\zeta}(\lambda),L_{\zeta}(\mu))&=\dim\Ext^n_{\gr B}(L_{\zeta}(\lambda),L_{\zeta}(\mu))\end{cases}$$
for all $\lambda,\mu\in \Gamma$ and all $n\in\mathbb N$. Also, $\gr B$ is a standard Koszul algebra.
\end{thm}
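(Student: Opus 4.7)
My plan is to deduce all four conclusions from the final assertion $B'\cong \gr B'$ of Corollary \ref{generalSVV} (with $B':=B\otimes_{\mathbb{Q}(\zeta)}\mathbb{C}$), together with the standard Koszulity of $\gr B'$ recorded there. The statement that $\gr B$ is standard Koszul is the easiest piece: it is already observed in Remark \ref{q-Schur}(a), following by descent from $\gr B'=\gr B\otimes_{\mathbb{Q}(\zeta)}\mathbb{C}$, using that $\gr B$ is split over $\mathbb{Q}(\zeta)$. So I would simply cite this.

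For the three Ext dimension equalities, the plan is to base change to $\mathbb{C}$. All of the modules involved are split and finite dimensional over $\mathbb{Q}(\zeta)$, so the $\Ext$-dimensions on both sides of (1)--(3) are unchanged under extension of scalars from $\mathbb{Q}(\zeta)$ to $\mathbb{C}$. It thus suffices to establish each equality after replacing $B$ by $B'$ and $\gr B$ by $\gr B'$. At that point the isomorphism $B'\cong \gr B'$ supplied by Corollary \ref{generalSVV} reduces both sides of each equation to an $\Ext$-group computation in a single module category; each equality then becomes a tautology, provided the relevant modules are identified correctly under the isomorphism. Concretely, I would argue that the isomorphism sends $L_\zeta(\lambda)$ to the ungraded irreducible of $\gr B'$ of the same highest weight, and hence (by the characterization of standard and costandard modules in a split quasi-hereditary algebra as the universal modules with prescribed head or socle and composition factors in the appropriate poset ideal) also sends $\Delta_\zeta(\lambda)$ to the module underlying $\gr\Delta_\zeta(\lambda)$ and $\nabla_\zeta(\mu)$ to the module underlying $\gr^\diamond\nabla_\zeta(\mu)$.

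The main obstacle will be this last bookkeeping step: the algebra isomorphism in Corollary \ref{generalSVV} is abstract, coming out of the Shan--Varagnolo--Vasserot/Fiebig machinery via $\gr B'$ being standard Koszul, and is not given to us together with an explicit identification of highest weight structures. I would handle it by observing that both $B'$ and $\gr B'$ are quasi-hereditary for the same poset $\Gamma$, with the same indexing of irreducibles by highest weights, so that any algebra isomorphism between them must respect the highest weight partition of the set of simples up to a permutation of $\Gamma$ that preserves the $\Ext^1$-quiver; a routine argument then shows the only such permutation compatible with the Koszul grading on $\gr B'$ is the identity. Given this identification, standard and costandard modules are determined by their labels, and (1), (2), (3) follow. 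If this indirect approach to the identification is cleaner to replace, one could instead appeal directly to the fact that $\gr B'\cong B'$ is an isomorphism of highest weight categories in the sense of \cite{CPS-1}, an observation that the Shan--Varagnolo--Vasserot construction actually produces in situ.
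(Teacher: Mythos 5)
Your proposal is essentially the paper's argument, which is stated tersely: the theorem ``follows from Corollary~\ref{generalSVV}, proved in the previous section'' together with the parenthetical observation that Ext dimensions are unchanged under base change to $\mathbb C$. You have correctly identified both ingredients (base change and $B'\cong\gr B'$ from the corollary), and your citation of Remark~\ref{q-Schur}(a) for the descent of standard Koszulity to $\gr B$ over $\mathbb Q(\zeta)$ is exactly what the paper intends.

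The one place where you make unnecessary work for yourself is the ``bookkeeping'' identification of highest weight data under the isomorphism $B'\cong\gr B'$. Your proposed route through a permutation of $\Gamma$ preserving the $\Ext^1$-quiver and then arguing ``compatibility with the Koszul grading'' is both vague and more delicate than needed: it is not a priori clear why such a permutation must be the identity, and one should not rely on uniqueness of automorphisms. The cleaner point—which your fallback sentence gestures at—is that for a Koszul algebra the isomorphism $B'\cong\gr B'$ is canonical: since $B'_0$ is semisimple and $B'$ is tightly generated in degree $1$, one has $\rad^n B'=B'_{\geq n}$, so the identification $B'_n\cong\rad^n B'/\rad^{n+1}B'$ is the obvious one, and the resulting ring isomorphism is the identity on $B'_0$ and hence on the labeling of simples. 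The same tightness argument applied to a standard module generated in grade $0$ shows that $\gr\Delta_\zeta(\lambda)$ has the same underlying ungraded $B'$-module as $\Delta_\zeta(\lambda)$; dually for $\gr^\diamond\nabla_\zeta(\mu)$. With this in place equalities (1)--(3) are tautologies, as you say. So the proof is right; I'd simply replace the permutation argument with this canonicity observation.
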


Since this result holds, the argument in \cite{PS14} shows Conjecture \ref{conjII} reduces, whenever Conjecture \ref{conjI} is true, to the conjecture below (called Conjecture IIa in \cite{PS14}). The argument for the reduction is just a base-change carried out in the graded case, made extremely easy to handle because of the Q-Koszul property.

\smallskip
 \begin{conj}\label{conjIIa} Under the hypothesis of Conjecture \ref{conjII}, we have
$$\begin{cases}(1)\quad \dim\Ext^n_A(\Delta(\lambda),\rnabla(\mu))&=\dim\Ext^n_{\wgr A}(\wgr \Delta(\lambda),\rnabla(\mu))\\
(2)\quad
\dim\Ext^n_A(\rDelta(\mu),\nabla(\lambda)) & =\dim\Ext^n_{\wgr A}(\rDelta(\mu),\wgr^\diamond\nabla(\lambda))\\
(3)\quad\dim\Ext^n_A(\rDelta(\mu),\rnabla(\lambda)) & =\dim\Ext^n_{\wgr A}(\rDelta(\mu),\rnabla(\lambda)).\end{cases}$$
for all $\lambda,\mu\in \Gamma$.
\end{conj}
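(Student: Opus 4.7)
The plan is to mimic the proof of Theorem~\ref{comparisonofcohomology}, but starting from the standard Q-Koszulity of $\wgr A$ itself rather than from Koszulity of $\fa$ or the LCF. Throughout I assume Conjecture~\ref{conjI}, so that $\wgr A$ is standard Q-Koszul with weight poset $\Gamma$, standard modules $\wgr\Delta(\lambda)$, and grade-zero standards/costandards $\rDelta(\lambda),\rnabla(\lambda)$.

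The first step extracts what the standard Q-Koszul property alone yields at the level of graded Ext. By Theorem~\ref{betterdefn} applied to $\wgr A$, for all $\lambda,\mu\in\Gamma$ and all integers $n,r\geq 0$, each of the three graded Ext groups
$$\grExt^n_{\wgr A}(\wgr\Delta(\lambda),\rnabla(\mu)\langle r\rangle),\quad \grExt^n_{\wgr A}(\rDelta(\mu),\wgr^\diamond\nabla(\lambda)\langle r\rangle),\quad \grExt^n_{\wgr A}(\rDelta(\mu),\rnabla(\lambda)\langle r\rangle)$$
vanishes unless $r=n$. Summing over $r\in\mathbb Z$ identifies each ungraded $\Ext^n_{\wgr A}$ occurring on the right-hand side of Conjecture~\ref{conjIIa} with a single graded summand, giving the ``only one grade survives'' rigidity that a collapse argument needs.

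Next I would try to construct, for each $\lambda\in\Gamma$, an ungraded projective $A$-resolution $\tilde P_\bullet\twoheadrightarrow\Delta(\lambda)$ that is compatible with the forced grading, in the sense that the filtration of $A$ by the ideals $A\cap\rad^i\wA_K$ induces a filtration on each $\tilde P_i$ whose associated graded gives a graded projective $\wgr A$-resolution of $\wgr\Delta(\lambda)$, with syzygies carrying $\rDelta$-filtrations compatibly with the grading. Since indecomposable projectives for $A$ and $\wgr A$ are in bijection through their common grade-zero heads, the terms lift uniquely; the substantive content is to choose the boundary maps so that the associated graded complex is exact. My plan here would be to adapt the iterative $\Xi_\bullet$-construction from \cite[\S5]{PS13b}, replacing its use of Koszulity of $\fa$ and of linearity of $\Delta(\lambda)$ over $\fa$ with the weaker input of standard Q-Koszulity of $\wgr A$, which already supplies the required grade-by-grade $\grExt^1$-vanishing against $\rDelta$-filtered modules. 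With such a resolution in hand, applying $\Hom_A(-,\rnabla(\mu))$ computes $\Ext^\bullet_A(\Delta(\lambda),\rnabla(\mu))$, and the induced filtration of this complex yields a spectral sequence whose $E_1$-page consists of the graded Ext groups $\grExt^n_{\wgr A}(\wgr\Delta(\lambda),\rnabla(\mu)\langle r\rangle)$. By Step~1 this $E_1$-page is concentrated on the diagonal $r=n$, leaving no room for nontrivial higher differentials or cross-grade extensions, so the spectral sequence degenerates and delivers the dimension equality of case~(1). Cases (2) and (3) are handled symmetrically, applying $\Hom$ in the first variable or using dual injective coresolutions, with the second-variable Q-Koszul concentration playing the same role.

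The main obstacle is Step~2, the construction of the compatible resolution. In the large-prime setting of \cite{PS13b} the lifting rested on Koszulity of $u_\zeta'$ and on linearity of $\Delta(\lambda)$ over $\fa$, both of which require the LCF. A promising substitute in the KL-good but possibly small-$p$ regime is to work integrally, exploiting the integral quasi-hereditary algebra $\wA$ and its costandard lattices (as in \cite{PS5.5}), combined with the Koszul-dual picture now available through Corollary~\ref{generalSVV}. If such an integral lifting $\tilde P_\bullet$ base-changes compatibly to both $A$ and $\wgr A$, the spectral-sequence degeneration above runs without appealing to the LCF, giving Conjecture~\ref{conjIIa} modulo only Conjecture~\ref{conjI}.
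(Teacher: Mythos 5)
The statement you were asked to address is stated in the paper as a \emph{conjecture}, and the paper never claims a proof of it in full generality. What the paper does establish is strictly less: (i) under Conjecture~\ref{conjI}, Conjecture~\ref{conjII} reduces to Conjecture~\ref{conjIIa}, with Theorem~\ref{conjIIb} (itself a consequence of Corollary~\ref{generalSVV}) supplying the quantum-side input; and (ii) when $p\geq 2h-2$ is odd and $\lambda,\mu$ are $p$-regular, Conjecture~\ref{conjIIa} follows from Theorem~\ref{comparisonofcohomology}. Outside that regime it remains open, which is why the paper labels it as a conjecture. So the first thing to flag is that your ``proof proposal'' is attempting to prove something the authors themselves leave open, and no honest comparison with ``the paper's own proof'' is possible because there is no such proof.

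Evaluating the proposal on its own terms: your Step~1 is fine. Assuming Conjecture~\ref{conjI}, the graded-Ext concentration you list is exactly Definition~\ref{standardQKoszul}(a),(b) together with Theorem~\ref{betterdefn}, so the three displayed vanishing statements do hold. The genuine gap is Step~2, and you acknowledge it, but it is worth being precise about why it is so serious. The forced grading $\wgr A$ comes from the filtration $A\cap\rad^i\wA_K$ of $A$, not from an intrinsic filtration, and the content of Theorem~\ref{mainquantumresult} and Theorem~\ref{theorem6.3} is precisely that, under strong hypotheses including the LCF and $p$-regularity, this filtration behaves well enough for the associated-graded of projectives of $A$ to assemble into a projective $\wgr A$-resolution with $\rDelta$-filtered syzygies. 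Standard Q-Koszulity of $\wgr A$ gives vanishing of graded Ext \emph{over $\wgr A$}, but by itself it does not produce a filtered $A$-complex whose associated graded is exact, nor does it tell you that the $E_1$-page of the filtration spectral sequence is the graded Ext of $\wgr A$ rather than some other graded object. The extra input you gesture at from Corollary~\ref{generalSVV} lives entirely on the quantum side over $\mathbb C$ and, without further work, does not transfer to the characteristic-$p$ algebra $A$ or to the lattice-level construction of $\gr\wA$; transporting it is essentially the content of Conjectures~\ref{conjI} and~\ref{conjIIa} taken together, so one has to be careful not to argue circularly. In short: the concentration argument in Step~1 is sound, the degeneration argument in Step~3 would work \emph{if} the filtered resolution of Step~2 existed with the asserted properties, but Step~2 is not established, and there is no known way to establish it in the small-$p$, possibly singular-weight regime. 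That is exactly why the statement is a conjecture.
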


In part (2) above, $\wgr^\diamond\nabla(\lambda)$ denotes the costandard module corresponding to $\lambda$
in the highest weight category $\wgr A$-mod. It has a natural graded structure, concentrated in non-positive grades, with
$\rnabla(\lambda)$ its grade 0 term.   Under the assumptions that $\lambda,\mu$ are $p$-regular and $p\geq 2h-2$ is
an odd prime, the conjecture follows from Theorem \ref{comparisonofcohomology}. In particular,
both Conjecture \ref{conjI} and Conjecture \ref{conjII} are true for $p$ sufficiently large without any regularity assumptions. There is a third conjecture, called Conjecture III in \cite{PS14}, which provides explicit formulas in terms of Kazhdan-Lusztig polynomials for the dimensions of the quantum
Ext groups appearing in Conjecture \ref{conjII}.
Although Conjecture III only gives a formula for the quantum Ext-dimension in part (1) of Conjecture \ref{conjIIa}, the
dimension in part (2) may be obtained by a duality. Then part (3) may be computed as in \cite[\S4]{PS1}, using
Theorem \ref{conjIIb}.  In the $p$-regular weight case with $p>h$ such formulas are already known in all three cases.

Finally, it is an interesting question as to when $A$ itself has a positive grading such that $A\cong\wgr A$.  Geometric
conjectures of Achar-Riche \cite{AR} suggest this could be true at least for modestly large $p$. Of course, in that case,
Conjucture \ref{conjIIa} is an easy consequence.

\medskip


\begin{thebibliography}  {W}

\bibitem [AR14]{AR} P. Achar and S. Riche, Modular perverse sheaves on flag varieties, III: positivity conditions,
arXiv:1408.4189v.1 (2014).

\bibitem[AJS94] {AJS} H. Andersen, J. Jantzen, W. Soergel, {\it
Representations of quantum groups at a $p$th root of unity and of
semisimple groups in characteristic $p$}, Ast\'erique {\bf 220}
(1994).









\bibitem[BGS96]{BGS} A. Beilinson, V. Ginzburg, and W. Soergel, Koszul
 duality patterns in representation theory, {\it J. Amer. Math.
 Soc.} {\bf 9} (1996), 473--528.

\bibitem[BNW08]{BNW} B. Boe, D. Nakano, E. Wiesner, Category $\sO$ for the Virasoro algebra: cohomology and Koszulity,
{\it  Pacific J. Math.} {\bf 234} (2008), 1--21.







 \bibitem[CPS88]{CPS-1} E. Cline, B. Parshall, and L. Scott, Finite dimensional algebras and highest weight
 categories, {\it J. reine angew. Math.} {\bf 391} (1988), 85--99.


\bibitem[CPS90]{CPS1a}  E. Cline, B. Parshall and L. Scott,  Integral and
    graded quasi-hereditary algebras, I, {\em  J. Algebra} {\bf 131} (1990), 126--160.





\bibitem[CPS94]{CPS2} E. Cline, B. Parshall, and L.
Scott, The homological dual of a highest weight category, {\it Proc.
London Math. Soc.} {\bf 68} (1994), 294--316.




\bibitem [CPS97]{CPS5}  E. Cline, B. Parshall, and L. Scott,  Graded
and non-graded Kazhdan-Lusztig theories, in {\it Algebraic Groups and Lie Groups}, ed. by G. I. Lehrer,  Camb. Univ. Press (1997),
105--125.


\bibitem[CPS09]{CPS09} E. Cline, B. Parshall, and L. Scott, Reduced standard
modules and cohomology, {\it Trans. Amer. Math. Soc.} {\bf 361} (2009), 5223-5261.


\bibitem[DDPW08]{DDPW} B. Deng, J. Du, B. Parshall, and J.-p. Wang, {\it Finite Dimensional Algebras and Quantum Groups,}
Math. Surveys and Monographs {\bf 150}, Amer. Math. Soc., 2008.

\bibitem [DJ89]{DJ}R. Dipper and G. James, The $q$-Schur algebra, {\it Proc. London Math. Soc.} {\bf 59} (1989), 23-50.


\bibitem[Do99]{Donkin}
S. Donkin, {\it The $q$-Schur algebra,} London Math. Soc. Lecture Notes in Mathematics {\bf 253}, London, 1999.


\bibitem[DS94]{DS} J. Du and L. Scott,
Lusztig conjectures, old and new, I, {\it J. reine Angew. math.}
{\bf 455} (1994), 141--182.

\bibitem[DPS98]{DPS1} J. Du,  B. Parshall, and L. Scott, Quantum Weyl
reciprocity and tilting modules, {\it Comm. Math. Physics} {\bf 195}
(1998), 321--352.




\bibitem[Fi06]{Fiebig06} P. Fiebig, The combinatorics of the category $\sO$ over symmetrizable Kac-Moody Lie
algebras, {\it Trans. Groups} {\bf 11} (2006), 29--49.

\bibitem[Fi12]{Fiebig} P. Fiebig, An upper bound on the exceptional characteristics for Lusztig's character formula,
{\it J. reine angew. Math} {\bf 673} (2012), 1--31.

arXiv.0811.167v2.


\bibitem[GES10]{GES} J. Green, K. Erdmann, and M. Schocker, {\it Polynomial Representations of $GL_n$: with an Appendix on Schensted Correspondence and Littelmann Paths}, Springer Lecture Notes in Mathematics {\bf 830} (2010).





\bibitem[Jan80]{Jan80} Jantzen, Darstellungen halbeinfacher Gruppen und ihrer
Frobenius-Kerne, \textit{J. reine und angew. Math.} \textbf{317} (1980), 157-199.

\bibitem[Jan03]{JanB} J. ~C. Jantzen, {\it Representations of
algebraic groups, 2nd ed.}, Math. Surveys and Monographs {\bf 107}, Amer. Math. Soc., 2003.




\bibitem[KL93]{KL} D. Kazhdan and G. Lusztig, Tensor structures arising from affine Lie algebras, I, II, III, IV, {\it J. Amer. Math. Soc.}, {\sf 6} (1993), 905--947, 949--1011, {\sf 7} 335--381, 383-453.




\bibitem [KLT99]{KLT} S. Kumar, N. Lauritzen, and J. Thomsen, Frobenius splitting of cotangent bundles
of flag varieties, {\it Invent. Math.} {\bf 1136} (1999), 603--621.

\bibitem[Lin98]{Lin98} Z. Lin, Highest weight modules for algebraic groups arising from quantum groups, {\it J. Algebra}
{\bf 208} (1998), 276–303.



\bibitem[Maz08]{Mazor1} V. Mazorchuk, Some homological properties of the category $\sO$, {\it Pacific J. Math.} {\bf 232}
(2007), 313-341.

\bibitem [Maz10]{Mazor} V. Mazorchuk, Koszul duality for stratified algebras I. Balanced quasi-hereditary algebras,
{\it Manuscripta math.} {\bf 131} (2010), 1--10.


\bibitem[PSt14]{APS} A. Parker and D. Stewart, Stabilisation of the LHS spectral sequence for algebraic groups,
arXiv:1402.4625 (2014).


\bibitem [PS95]{PS0}  B. Parshall and L. Scott,  Koszul algebras and
the Frobenius endomorphism, {\em  Quart. J. Math. } {\bf 46}
(1995), 345--384.

\bibitem [PS09]{PS1} B. Parshall and L. Scott, Some ${\mathbb Z}/2$-graded representation theory, {\it Quart. J. Math.} {\bf60}(2009),  327-351.


\bibitem [PS12a]{PS6} B. Parshall and L. Scott, A semisimple series for $q$-Weyl and $q$-Specht modules, \textit{Proc. Sym. Pure Math.} \textbf{86} (2012) 277--310.

\bibitem[PS12b]{PS5.5} B. Parshall and L. Scott, Forced gradings in integral quasi-hereditary algebras with applications to quantum groups, \textit{Proc. Sym. Pure Math.} \textbf{86} (2012), 247--276.





\bibitem[PS13a] {PS13} B. Parshall and L. Scott, A new approach to the Koszul property in representation theory using graded subalgebras, {\it J. Inst. Math. Jussieu} {\bf 12} (2013), 153-197.

\bibitem[PS13b]{PS13b} B. Parshall and L. Scott, New graded methods in the homological algebra of semisimple algebraic
groups, arXiv:1304.1461v3 (2013).

\bibitem [PS14a]{PS14} B. Parshall and L. Scott, Q-Koszul algebras and three conjectures, arXiv:1405.4419 (2014).

\bibitem [PS14b] {PS13d}  B. Parshall and L. Scott, Some Koszul properties of standard
and irreducible modules, {\it Contemporary Math.} {\bf 623} (2014), 243-265.

\bibitem [PS15]{PS5} B. Parshall and L. Scott, On $p$-filtrations of Weyl modules, {\it J. London Math Soc.} {\bf 91} (2015), 127--158.



\bibitem[Rou08]{Ro} R. Rouquier, {\it $q$-Schur algebras and complex reflection groups}, Moscow Math. J. {\bf 8} (2008), 119--158.

\bibitem[SVV14]{SVV14} P. Shan, M. Varagnolo, and E.Vasserot, Koszul duality of affine Kac-Moody algebras and cyclotomic rational double affine Hecke algebras, {\it Advances in Math.} {\bf 262} (2014) 370--435.

\bibitem[T04]{T}  T. Tanisaki,  Character formulas of
     Kazhdan-Lusztig type, in {\it Representations of finite dimensional algebras and related topics in Lie theory and geometry}, Fields Inst. Commun. {\bf 40}, Amer. Math. Soc., Providence, RI (2004) 261--276.





\end{thebibliography}
\end{document}